\newtheorem{theorem}{Theorem}[section]
\newtheorem{utv*}{Proposition}
\newtheorem{hyp*}{Conjecture}
\newtheorem*{example*}{Example}
\newtheorem{lemma}[theorem]{Lemma}
\newtheorem{corollary}[theorem]{Corollary}
\newtheorem*{th*}{Theorem}
\newtheorem{prop}[theorem]{Proposition}
\theoremstyle{definition}
\newtheorem*{rem*}{Remark}
\newtheorem{defin}[theorem]{Definition}
\def\sli{\sum\limits}
\def\ili{\int\limits}
\def\R{\mathbb{R}}
\def\ep{\varepsilon}
\def\s{\delta}
\newcommand{\diam}{\operatorname{diam}}
\newcommand{\PP}{\mathcal{P}}
\def\cyr{\fontencoding{OT2}\fontfamily{wncyr}\selectfont}
\DeclareTextFontCommand{\textcyr}{\cyr}
\newcounter{vremennyj}
\def\H{\mathcal{H}}
\renewcommand\S{\mathbb{S}}
\title[Covering and separation properties of Chebyshev points]{Covering and separation of Chebyshev points for non-integrable Riesz potentials}
\date{\today}
\begin{document}
\author{A. Reznikov}
\address{Center for Constructive Approximation, Department of Mathematics, Vanderbilt University}
\email{aleksandr.b.reznikov@vanderbilt.edu}
\author{E. B. Saff}
\thanks{The research of A. Reznikov and E. B. Saff was supported, in part, by the National Science Foundation grants DMS-1516400}
\email{edward.b.saff@vanderbilt.edu}
\author{A. Volberg}
\address{Department of Mathematics, Michigan State University}
\email{volberg@math.msu.edu}
\thanks{The research of A. Volberg was supported, in part, by the National Science Foundation grant DMS-1600065}
\begin{abstract}
For Riesz $s$-potentials $K(x,y)=|x-y|^{-s}$, $s>0$, we investigate separation and covering properties of $N$-point configurations $\omega^*_N=\{x_1, \ldots, x_N\}$ on a $d$-dimensional compact set $A\subset \R^\ell$ for which the minimum of $\sum_{j=1}^N K(x, x_j)$ is maximal. Such configurations are called $N$-point optimal Riesz $s$-polarization (or Chebyshev) configurations. For a large class of $d$-dimensional sets $A$ we show that for $s>d$ the configurations $\omega^*_N$ have the optimal order of covering. Furthermore, for these sets we investigate the asymptotics as $N\to \infty$ of the best covering constant. For these purposes we compare best-covering configurations with optimal Riesz $s$-polarization configurations and determine the $s$-th root asymptotic behavior (as $s\to \infty$) of the maximal $s$-polarization constants. In addition, we introduce the notion of ``weak separation'' for point configurations and prove this property for optimal Riesz $s$-polarization configurations on $A$ for $s>\textup{dim}(A)$, and for $d-1\leqslant s < d$ on the sphere $\mathbb{S}^d$.
\end{abstract}
\maketitle

\section{Introduction}
Suppose $A$ is a compact subset of a Euclidean space $\R^\ell$ and $\omega_N=\{x_1, \ldots, x_N\}\subset A$ is a {\it multiset} (or an {\it $N$-point configuration}); i.e., a set of points with possible repetitions and cardinality $\# \omega_N = N$, counting multiplicities. For a positive number $s$ we put
$$
P_s(A;\omega_N):=\inf_{y\in A} \sum_{j=1}^N \frac{1}{|y-x_j|^s}.
$$
Then the {\it $N$-th s-polarization (or Chebyshev) constant of $A$} is defined by
$$
\PP_{s}(A;N):=\sup_{\omega_N\subset A} P_s(A;\omega_N).
$$
We note that since $A$ is compact, there exists for each $N\in \mathbb{N}$ a configuration $\omega_N^*=\{x_1^*, \ldots, x_N^*\}$ and a point $y^*$ such that
\begin{equation}\label{optconfig}
\PP_s(A;N)=P_s(A;\omega_N^*) = \sli_{j=1}^N \frac{1}{|y^*-x_j^*|^s}. 
\end{equation}
We call $\omega_N^*$ an {\it optimal (or extremal) Riesz $s$-polarization configuration} or simply an {\it optimal configuration}.

From an applications prospective, the maximal polarization problem, say on a compact surface (or body), can be viewed as the problem of determining the smallest number of sources (injectors) of a substance together with their optimal locations that can provide a required saturation of the substance at every point of the surface (body). 

The general notion of polarization (or Chebyshev constants) for potentials was likely first introduced by Ohtsuka \cite{Ohtsuka1967}. Further investigations of the asymptotic behavior as $N\to \infty$ of polarization constants as well as the asymptotic behavior of optimal configurations appear, for example, in \cite{Ambrus2013}, \cite{Erdelyi2013}, \cite{Farkas2006}, \cite{Farkas2008}, \cite{borodachov2014asymptotics}, \cite{Simanek2015}, \cite{Borodachov2016}, \cite{borodachov2016optimal}, \cite{reznikov2016minimum}.

The following result is a special case of a theorem due to Borodachov, Hardin, Reznikov and Saff \cite{borodachov2016optimal} (see also \cite{Borodachov2016}). It describes the asymptotic behavior of optimal configurations for the case of non-integrable Riesz kernels on $A$. Here and throughout we denote by $\H_d$ the Hausdorff measure on $\R^\ell$, $d\leqslant \ell$, normalized by $\H_d([0,1]^d)=1$.
\begin{theorem}\label{thborodachovetal}
Suppose $A$ is a compact $C^1$-smooth $d$-dimensional manifold, embedded in $\R^\ell$ with $d\leqslant \ell$, and $\H_d(\partial A)=0$, where $\partial A$ denotes the boundary of $A$. If $s>d$, then there exists a positive finite constant $\sigma_{s,d}$ that does not depend on $A$ such that
\begin{equation}\label{definofsigma}
\lim_{N\to \infty} \frac{\PP_s(A; N)}{N^{s/d}} = \frac{\sigma_{s, d}}{\H_d(A)^{s/d}}.
\end{equation}
Moreover, if $\{\omega_N^*\}_{N=1}^\infty$ is any sequence of optimal configurations satisfying \eqref{optconfig}, then the normalized counting measures $\mu_N^*$ for the multisets $\omega_N^*$ satisfy
$$
\mu^*_N:=\frac{1}{N}\sli_{x\in \omega^*_N} \delta_{x} \stackrel{*}{\to} \mu,
$$
where $\stackrel{*}{\to}$ denotes convergence in the weak$^*$ topology, and $\mu$ is the uniform measure on $A$; i.e., for any Borel set $B\subset \R^\ell$
$$
\mu(B)=\frac{\H_d(B\cap A)}{\H_d(A)}.
$$
\end{theorem}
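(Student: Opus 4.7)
The plan is to first prove the asymptotic on the unit cube $A_0 = [0,1]^d$, which will define $\sigma_{s,d}$, then transfer the asymptotic to the general $C^1$-smooth $d$-manifold $A$ via local bi-Lipschitz charts, and finally deduce the weak$^*$ convergence of $\mu_N^*$ from the sharpness of the asymptotic. The two structural inputs driving the argument are the scaling identity $\PP_s(\lambda B; N) = \lambda^{-s}\,\PP_s(B; N)$ and the boundary hypothesis $\H_d(\partial A) = 0$.

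\textbf{Step 1: existence on the cube.} Subdividing $A_0$ into $k^d$ subcubes of side $1/k$ and placing a near-optimal $N$-point configuration in each subcube shows, by scaling, that $\PP_s(A_0; Nk^d) \geq k^s\,\PP_s(A_0; N)$: for any $y \in A_0$ the sum over all $Nk^d$ points dominates the sum over only the points of the subcube containing $y$. Writing $a_N := \PP_s(A_0;N)/N^{s/d}$, this gives $a_{Nk^d} \geq a_N$, and a standard Fekete-type comparison (approximating any large $N'$ by the nearest $Nk^d$ and using monotonicity of $\PP_s(A_0;\cdot)$) yields the limit $\sigma_{s,d} := \lim_N a_N$. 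For finiteness, given any $\omega_N \subset A_0$, subdivide $A_0$ into $2N$ cubes of side $\sim N^{-1/d}$; at least $N$ of them contain no $x_j$. Averaging $\sum_j |y_Q - x_j|^{-s}$ over the centers $y_Q$ of the empty cubes and grouping the inner sum by annular shells around each $x_j$ bounds it above by $N^{s/d}\sum_{k\geq 1} k^{d-1-s}$, the key series converging exactly because $s > d$. Thus $\PP_s(A_0;N) \leq C N^{s/d}$.

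\textbf{Step 2: transfer to $A$.} Using $\H_d(\partial A) = 0$ and $C^1$-smoothness, for any $\varepsilon > 0$ cover the interior of $A$ up to $\H_d$-measure $\varepsilon$ by finitely many disjoint charts $U_i = \varphi_i(Q_i)$, where $\varphi_i$ is bi-Lipschitz on a cube $Q_i \subset \R^d$ with constants in $(1-\varepsilon, 1+\varepsilon)$. For the lower bound, distribute $N_i \approx N\,\H_d(U_i)/\H_d(A)$ Step-1 near-optimal points into each $U_i$ via $\varphi_i$; the infimum over $A$ is bounded below by the infimum over any single $U_i$ of the sum restricted to that chart's points, which by Step 1 and the near-isometry of $\varphi_i$ produces
$$\liminf_{N \to \infty} \PP_s(A;N)/N^{s/d} \;\geq\; (1-O(\varepsilon))\,\sigma_{s,d}/\H_d(A)^{s/d}.$$
The matching upper bound is dual: for any $\omega_N$, pigeonhole supplies a chart $U_i$ with fewer than $N\,\H_d(U_i)/\H_d(A)$ points, and the empty-cube test-point construction of Step 1 pushed through $\varphi_i^{-1}$ produces $y^* \in U_i \subset A$ with $\sum_j |y^*-x_j|^{-s} \leq (1+O(\varepsilon))\,\sigma_{s,d}\,N^{s/d}/\H_d(A)^{s/d}$. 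Sending $\varepsilon \to 0$ gives the stated asymptotic \eqref{definofsigma}.

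\textbf{Step 3 and the main obstacle.} For weak$^*$ convergence, let $\nu$ be any subsequential limit of $\mu_{N_k}^*$. If $\nu(V) < \H_d(V \cap A)/\H_d(A)$ for some open $V \subset A$, then $\omega_{N_k}^* \cap V$ has asymptotically a strictly deficient number of points, and applying Step 2's upper bound on $V$ with this deficient count produces $\inf_{y \in V}\sum_j |y-x_j^*|^{-s}$ strictly below $\sigma_{s,d}\,N_k^{s/d}/\H_d(A)^{s/d}$, contradicting the sharp asymptotic for $\PP_s(A; N_k)$. Hence $\nu = \mu$, and uniqueness of subsequential limits gives full weak$^*$ convergence. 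The main obstacle I expect is the finiteness half of Step 1: because $s > d$ the kernel $|y-x|^{-s}$ is not $\H_d$-integrable on $A_0$, so the standard trick $\inf_y f(y) \leq \int_{A_0} f\,d\H_d$ diverges; the empty-cube pigeonhole replaces this continuous average by a discrete one that automatically excludes near-diagonal contributions, and transferring it through the bi-Lipschitz charts in Step 2 must be carefully controlled so that the $\varepsilon$ distortion in Lipschitz constants does not contaminate the sharp constant $\sigma_{s,d}/\H_d(A)^{s/d}$.
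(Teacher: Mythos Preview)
The paper does not prove this theorem; it is quoted as a special case of a result of Borodachov, Hardin, Reznikov and Saff and cited to \cite{borodachov2016optimal} and \cite{Borodachov2016}, so there is no in-paper argument to compare against. Your outline---establish the limit on the cube via scaling and subadditivity, transfer by $C^1$ bi-Lipschitz charts, then read off weak$^*$ convergence from sharpness---is broadly the route taken in those references.

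There is, however, a genuine gap in your Step~2 upper bound (and it recurs in Step~3). You write that once pigeonhole supplies a chart $U_i$ with a deficient count, ``the empty-cube test-point construction of Step~1 pushed through $\varphi_i^{-1}$ produces $y^*\in U_i$ with $\sum_j |y^*-x_j|^{-s}\le (1+O(\varepsilon))\,\sigma_{s,d}N^{s/d}/\H_d(A)^{s/d}$''. But Step~1 (whether the crude empty-cube bound or the sharp cube asymptotic) only controls the sum over the $N_i$ points \emph{lying in} $U_i$; the full sum at $y^*$ also picks up the remaining $N-N_i$ points of $\omega_N$ outside $U_i$, and your test point $y^*$ may sit arbitrarily close to $\partial U_i$, so those outside points can be at distance $o(1)$ from $y^*$ and contribute a term that is not $o(N^{s/d})$. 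The standard repair is to force $y^*$ into a shrunken chart $U_i'\subset U_i$ with $\dist(U_i',A\setminus U_i)\ge\delta>0$, so that the outside contribution is at most $N\delta^{-s}=o(N^{s/d})$; but then the pigeonhole count and the Step~1 asymptotic must be rerun relative to the smaller set $U_i'$, and one has to verify that the loss from $\H_d(U_i')<\H_d(U_i)$ disappears after sending first $N\to\infty$ and then $\delta\to 0$. This two-parameter limit ($\varepsilon$ in the chart cover, $\delta$ in the buffer) is exactly where the sharp constant could be contaminated, and it is not addressed in your sketch---your ``main obstacle'' paragraph flags only the Lipschitz distortion, not this far-point leakage.
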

In other words, in the limit, optimal polarization configurations $\omega_N^*$ for non-integrable Riesz potentials are uniformly distributed in the weak$^*$ sense. In this paper we study more distributional properties of optimal configurations $\omega_N^*$. In particular, we investigate their separation, their covering (or mesh) radius, and their connection to the ``best covering problem'' for the set $A$. 
\begin{defin}
Let $A$ be a compact subset of a Euclidean space. For any $N$-point configuration $\omega_N\subset A$, the {\it separation constant} of $\omega_N$ is defined by
$$
\delta(\omega_N):=\min_{i\not=j}|x_i-x_j|
$$
and the {\it covering radius} of $\omega_N$ is defined by
\begin{equation}
\rho_A(\omega_N):=\max_{y\in A}\min_{x\in \omega_N} |y-x|.
\end{equation}
The {\it best $N$-point covering radius for $A$} $\rho_A(N)$ is given by
\begin{equation}\label{defbestcovering}
\rho_A(N):=\min_{\omega_N\subset A} \rho_A(\omega_N),
\end{equation}
where the minimum is taken over all $N$-point configurations $\omega_N\subset A$. 
\end{defin}
In approximation theory (for example, in interpolation by splines), the separation constant $\delta(\omega_N)$ often measures ``stability'' of approximation, while the covering radius $\rho_A(\omega_N)$ is involved in bounds for the error of the approximation (see, e.g., \cite{breger2016points}). Quasi-uniform sequences; i.e., sequences $\{\omega_N\}_{N=2}^\infty$ for which the ratios $\rho_A(\omega_N)/\delta(\omega_N)$ are bounded from above, appear, for example, in a number of applications involving approximation by radial basis functions, see, e.g., \cite{MR2740542}. Thus they play an important role in the complexity analysis for such applications. 

Regarding the asymptotic behavior of polarization constants as $s$ grows large, it is known, see \cite{Borodachov2016}, that for a fixed $N$ we have
$$
\lim_{s\to \infty}\left(\frac{\PP_s(A; N)}{N^{s/d}}\right)^{1/s} = \frac1{N^{1/d}\rho_A(N)}.
$$
However, the proof in \cite{Borodachov2016} does not guarantee that this limit is uniform in $N$; thus it does not imply any asymptotic behavior of the constants $\sigma_{s,d}$ in \eqref{definofsigma} as $s\to \infty$. One of our main results, Theorem \ref{bestcovering}, shows that for a large class of $d$-dimensional sets $A$,
\begin{equation}\label{sigmalimittt}
\lim_{s\to \infty}\left(\frac{\sigma_{s,d}}{\H_d(A)^{s/d}}\right)^{1/s}=\lim_{s\to \infty}\lim_{N\to \infty} \left(\frac{\PP_s(A; N)}{N^{s/d}}\right)^{1/s} = \frac{1}{\lim_{N\to \infty} N^{1/d} \rho_A(N)}.
\end{equation}
In the case when $A\subset \R^2$ is a compact set with $\H_2(A)>0$, it is known \cite{Kershner1939} that
$$
\lim_{N\to \infty} N^{1/2} \rho_A(N) = \frac{\sqrt{2}}{\sqrt[4]{27}} \H_2(A)^{1/2};
$$
thus from \eqref{sigmalimittt},
$$
\lim_{s\to \infty}\sigma_{s,2}^{1/s} = \frac{\sqrt[4]{27}}{\sqrt{2}}.
$$
For higher dimensions we prove that all limits in \eqref{sigmalimittt} exist.

\bigskip

We shall work primarily with the class of $d$-regular sets.

\begin{defin}\label{defregular}
A compact set $A\subset \R^\ell$ is called {\it $d$-regular} if there exist a measure $\mu$ supported on $A$ and two positive constants $c_1$ and $c_2$ such that for any $x\in A$ and any positive $r<\diam(A)$, we have 
\begin{equation}
c_1r^d\leqslant \mu(A\cap B(x, r))\leqslant c_2r^d,
\end{equation}
where $B(x,r)$ is the open ball in $\R^\ell$ with center $x$ and radius $r$.
\end{defin}
The following estimate from above for $\PP_s(A; N)$, which follows from \cite[Theorem 2.4]{Erdelyi2013} and its proof, will be useful for our investigation.
\begin{theorem}\label{botest}
If $A\subset \R^\ell$, $\ell\geqslant d$, $\H_d(A)>0$ and $s>d$, then there exists a constant $C_s>0$, that depends on $d$, $A$ and $s$ such that, for any positive integer $N$,
\begin{equation}\label{aboveest}
\PP_s(A;N)\leqslant C_s N^{s/d}.
\end{equation}
Moreover, $C_s$ can be chosen so that there exists a constant $C_0$ with the property that for large values of $s$ we have $1\leqslant (C_s)^{1/s}\leqslant C_0$.
\end{theorem}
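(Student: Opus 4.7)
The idea is a first-moment / averaging argument: given an arbitrary $N$-point configuration $\omega_N\subset A$, one locates a test point $y\in A$ for which the polarization sum $\sum_j|y-x_j|^{-s}$ is small by averaging the sum against a suitable measure supported on $A$.

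The main tool is a Frostman-type measure. Since $\H_d(A)>0$, Frostman's lemma furnishes a nonzero Borel measure $\mu$ supported on (a compact subset of) $A$ together with a constant $M>0$, depending only on $A$ and $d$, such that $\mu(B(x,r))\leqslant Mr^d$ for every $x\in\R^\ell$ and $r>0$. The obstacle to naive averaging is that $\int_A|y-x_j|^{-s}\,d\mu(y)$ diverges whenever $s\geqslant d$. I sidestep this by averaging only over $A_r:=A\setminus\bigcup_j B(x_j,r)$ with $r:=\alpha N^{-1/d}$: by Frostman, $\mu\bigl(\bigcup_j B(x_j,r)\bigr)\leqslant NMr^d=M\alpha^d$, so fixing $\alpha>0$ sufficiently small (depending only on $M$ and $\mu(A)$) gives $\mu(A_r)\geqslant \mu(A)/2>0$. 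Hence
$$
P_s(A;\omega_N)\leqslant \inf_{y\in A_r}\sum_{j=1}^N\frac{1}{|y-x_j|^s}\leqslant\frac{1}{\mu(A_r)}\sum_{j=1}^N\int_{A_r}\frac{d\mu(y)}{|y-x_j|^s}.
$$

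A dyadic decomposition of $\{|y-x_j|>r\}$ into annuli, combined with the Frostman bound, yields
$$
\int_{A_r}\frac{d\mu(y)}{|y-x_j|^s}\leqslant \sum_{k\geqslant 0}(2^k r)^{-s}M(2^{k+1}r)^d=\frac{M\,2^d\,r^{d-s}}{1-2^{-(s-d)}},
$$
the geometric series converging precisely because $s>d$. Summing over $j$, dividing by $\mu(A_r)$, and plugging in $r=\alpha N^{-1/d}$ gives
$$
P_s(A;\omega_N)\leqslant C_s\, N^{s/d},\qquad C_s:=\frac{2^{d+1}M\,\alpha^{d-s}}{(1-2^{-(s-d)})\,\mu(A)}.
$$
Taking the supremum over $\omega_N$ proves \eqref{aboveest}. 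For the uniform bound on $(C_s)^{1/s}$, note that all factors in $C_s$ besides $\alpha^{d-s}$ contribute terms whose $1/s$-th powers tend to $1$ as $s\to\infty$, while $(\alpha^{d-s})^{1/s}\to \alpha^{-1}$; thus $(C_s)^{1/s}$ is bounded above by a constant $C_0$ for all large $s$, and replacing $C_s$ by $\max(C_s,1)$ secures the lower bound $(C_s)^{1/s}\geqslant 1$.

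The principal obstacle is the divergence noted above: one must excise a ball around each $x_j$ at precisely the scale $r\asymp N^{-1/d}$ that matches the expected nearest-neighbor spacing of an optimal configuration, and then verify that a definite fraction of the $\mu$-mass survives the excision. Upper $d$-regularity of the averaging measure is essential at both steps, which is why Frostman's lemma is invoked rather than simply taking $\mu=\H_d|_A$ (which need not itself be upper $d$-regular).
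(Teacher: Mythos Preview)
Your argument is correct. The Frostman measure plus excision at scale $r=\alpha N^{-1/d}$ followed by a dyadic annulus estimate is exactly the standard route to such bounds, and your bookkeeping of the constant $C_s$ and its $s$-th root asymptotics is clean.

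Regarding comparison with the paper: the paper does not supply its own proof of this theorem but simply records it as a consequence of \cite[Theorem 2.4]{Erdelyi2013} and its proof. The argument in that reference is essentially the one you have written --- an averaging argument against an upper $d$-regular (Frostman) measure, with the excision of balls of radius $\asymp N^{-1/d}$ about the configuration points to render the Riesz integral finite. So your proposal matches the intended proof and there is nothing to correct or add.
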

The following immediate consequence of this theorem will be proved in Section \ref{secappendix}.
\begin{prop}\label{babyhole}
With the hypotheses of Theorem \textup{\ref{botest}}, let $\omega_N=\{x_j\}_{j=1}^N$ be a fixed $N$-point configuration on $A$. There exists a positive constant $c_s$, independent of $N$ and $\omega_N$, with the following property: if $y^*=y^*_s\in A$ is a point such that
$$
\sli_{j=1}^N \frac{1}{|y^*-x_j|^s}=\inf_{y\in A}\sli_{j=1}^N \frac{1}{|y-x_j|^s},
$$
then $|y^*-x_j|\geqslant c_s N^{-1/d}$ for each $j=1,\ldots, N$. Moreover, $c_s$ can be chosen so that $\lim_{s\to \infty}c_s^{1/s}=1$.

Furthermore, the same is true for $s\in [d-1, d)$ when $A=\mathbb{S}^d$, the $d$-dimensional unit sphere in $\mathbb{R}^{d+1}$.
\end{prop}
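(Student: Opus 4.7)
The plan is to argue by contradiction, leveraging the polynomial upper bound on $\PP_s(A;N)$ supplied by Theorem~\ref{botest}. The whole argument for the $d$-regular case is a one-line comparison: lower bound from a single term vs.\ upper bound for the full polarization.

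Concretely, suppose toward a contradiction that some index $j_0$ satisfies $|y^*-x_{j_0}|<cN^{-1/d}$ for some $c>0$. Then a single term in the defining sum already yields
\[
P_s(A;\omega_N)=\sum_{j=1}^N\frac{1}{|y^*-x_j|^s}\;\ge\;\frac{1}{|y^*-x_{j_0}|^s}\;>\;c^{-s}N^{s/d}.
\]
Since $y^*$ realizes the infimum on $A$ and $\omega_N\subset A$, Theorem~\ref{botest} gives the complementary upper bound $P_s(A;\omega_N)\le\PP_s(A;N)\le C_sN^{s/d}$. Comparing these forces $c^{-s}<C_s$, i.e.\ $c>C_s^{-1/s}$; choosing $c_s:=C_s^{-1/s}$ therefore produces a contradiction and establishes $|y^*-x_j|\ge c_sN^{-1/d}$ for every $j$, uniformly in $N$ and $\omega_N$.

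The asymptotic claim is then immediate from the second part of Theorem~\ref{botest}: for all sufficiently large $s$ one has $1\le C_s^{1/s}\le C_0$, hence
\[
c_s^{1/s}=\bigl(C_s^{1/s}\bigr)^{-1/s}\ge C_0^{-1/s}\longrightarrow 1\quad\text{as } s\to\infty,
\]
while trivially $c_s^{1/s}\le 1$ (since $C_s\ge 1$); thus $\lim_{s\to\infty}c_s^{1/s}=1$.

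The case $A=\S^d$, $s\in[d-1,d)$ cannot be handled by the same comparison, since for $s<d$ one has $\PP_s(\S^d;N)\sim N\gg N^{s/d}$ and the estimate $\PP_s(\S^d;N)\le C_sN^{s/d}$ simply fails. The natural substitute is a purely variational argument: assuming $|y^*-x_{j_0}|=r<c_sN^{-1/d}$, one would exhibit another point $y'\in\S^d$ at distance $\sim r$ from $y^*$ for which $F(y'):=\sum_j|y'-x_j|^{-s}$ is strictly less than $F(y^*)$, contradicting minimality. The candidate $y'$ is obtained by perturbing $y^*$ tangentially on $\S^d$ in the direction opposite to $x_{j_0}$, so that the dominant summand $r^{-s}$ drops by a definite fraction. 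I expect the main obstacle to be controlling the increase in the remaining summands when further points of $\omega_N$ happen to cluster near $y^*$; this seems to require either a measure/covering argument producing a ``safe'' tangent direction at $y^*$, or a direct use of the first-order optimality condition in $T_{y^*}\S^d$ (available because $\S^d$ has no boundary).
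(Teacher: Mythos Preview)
Your treatment of the case $s>d$ is correct and is exactly the paper's argument (phrased as contrapositive rather than direct), including the choice $c_s=C_s^{-1/s}$ and the asymptotic $c_s^{1/s}\to 1$.

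For $A=\S^d$ with $s\in[d-1,d)$, however, you do not give a proof: you correctly diagnose that the $s>d$ comparison collapses, but what follows is only a heuristic sketch of a tangential perturbation, ending with an acknowledgment of an uncontrolled obstacle (clustering of other $x_j$ near $y^*$). This is a genuine gap. Your proposed route --- moving along $\S^d$ and invoking first-order optimality --- would at best yield information about a \emph{sum} of directional contributions, not an individual distance $|y^*-x_{j_0}|$, and without an a priori bound on how many points can sit at distance $\sim r$ from $y^*$ the dominant-term argument does not close.

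The paper's argument is potential-theoretic and moves \emph{radially off} the sphere rather than tangentially along it. Write $\gamma:=P_s(\S^d;\omega_N)$ and $U^{\omega_N}(y):=N^{-1}\sum_j|y-x_j|^{-s}$. On $\S^d$ one has $U^{\omega_N}\ge (\gamma/(N\gamma_{s,d}))\,U$, where $U$ is the equilibrium potential; by the maximum principle for Riesz potentials (valid for $s\in[d-1,d)$) this inequality persists throughout $\R^{d+1}$. Evaluating at $y:=(1+N^{-1/d})y^*$ and using a quantitative bound $U(y)\ge \gamma_{s,d}-CN^{-1+s/d}$ together with $\gamma\le N\gamma_{s,d}$ gives
\[
\sum_{j=1}^N\frac{1}{|y-x_j|^s}\ \ge\ \gamma - C_2 N^{s/d}.
\]
Now subtract the $i$-th term on both sides and use the elementary geometric fact $|x_j-y|\ge |x_j-y^*|$ for every $j$ (a two-line computation from $|y|=1+N^{-1/d}$) to obtain
\[
\gamma - C_2 N^{s/d} - \frac{1}{|y-x_i|^s}\ \le\ \sum_{j\ne i}\frac{1}{|y^*-x_j|^s}\ =\ \gamma - \frac{1}{|y^*-x_i|^s}.
\]
Since $|y-x_i|\ge N^{-1/d}$ trivially, this yields $|y^*-x_i|^{-s}\le (C_2+1)N^{s/d}$, which is the claim. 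The point is that the radial displacement decouples the problematic term from the rest via a monotonicity inequality that holds for \emph{every} summand simultaneously --- precisely the control your tangential scheme lacked.
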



We next introduce the main class of sets $A$ that we will consider. 

\begin{defin}\label{strconv}
A compact set $A\subset \R^d$ is called a {\it body} if $A\not=\emptyset$ and $A=\textup{Clos}(\textup{Int}(A))$. We say that a body $A\subset \R^d$ is {\it strongly convex} if it is convex and its boundary $\partial A$ is a $(d-1)$-dimensional $C^2$-smooth manifold with non-degenerate Gaussian curvature \footnote{Such conditions appear in many problems in harmonic analysis, see, e.g., \cite{Iosevich20010}.}.
\end{defin}
This class includes the closed unit ball 
$$
\mathbb{B}^d:=\{x\in \R^d\colon |x|\leqslant 1\}$$ 
and ellipsoids 
$$
\{(x_1, \ldots, x_d)\colon x_1^2/a_1^2 + \cdots + x_d^2/a_d^2 \leqslant 1\};$$ 
however, it does not include cubes and polyhedra.

The paper is organized as follows. In Section \ref{sectionmain} we state and discuss our main results. In Section \ref{sectionconvex} we prove a `weak separation' result for strongly convex bodies. In Section \ref{sectioncube} we prove the `weak separation' for the unit cube $[0,1]^d$, and in Section \ref{sectionsphere} we prove it for the unit sphere $\S^d$ and spherical caps in $\S^d$. Further, in Section \ref{sectionmesh}, we derive a criterion for a sequence of configurations to have an optimal order of covering radius $\rho_A(\omega_N)$. We also show that configurations $\omega^*_N$ that are optimal for $\PP_s(A; N)$ satisfy this criterion if $A$ is strongly convex, a cube, a sphere, or a spherical cap. And, in Section \ref{sectionlimit}, we connect the asymptotic behavior of the constant $\sigma_{s,d}$ as $s\to \infty$ with the asymptotic behavior of the best covering radius $\rho_N(A)$, where $A$ is any of the sets just mentioned.
We prove Proposition \ref{babyhole} in Section \ref{secappendix} and in the Appendix (Section \ref{appendix}) we present equivalent definitions of best covering for the space $\R^d$.
\section{Main results}\label{sectionmain}
For strongly convex bodies $A\subset \R^d$ the separation and covering properties of extremal configurations $\omega_N^*$ for $\PP_s(A; N)$, in general, depend on the parameter $s$. Here we shall prove `weak separation' and covering properties for $s>d$.
In contrast, it is known \cite{Erdelyi2013} that for the closed $d$-dimensional unit ball $\mathbb{B}^d\subset \R^d$ and for $0<s\leqslant d-2$, the unique optimal $N$-point $s$-polarization configuration $\omega^*_N$ is $\omega^*_N = \{0, \ldots, 0\}$; thus,
$$
\delta(\omega^*_N)=0, \;\;\;\; \rho_{A}(\omega^*_N) = 1, \; \; \; \forall N. 
$$
The main reason behind this is that the function
$$
x\mapsto |x-y|^{-s}
$$
is superharmonic when $s\leqslant d-2$. 

Our first goal is to establish for the non-integrable case $s>d$ a weak-separation property in the following sense.
 
\begin{defin}
A family $\Omega$ of multisets $\omega$ from $A$, where $A\subset \R^\ell$ has Hausdorff dimension $d$, is called {\it weakly well-separated with parameter $\eta>0$} if there exists an $M\in \mathbb{N}$ such that for every $\omega\in \Omega$ and every point $z\in \R^\ell$, we have
\begin{equation}\label{mmm}
\#\big(\omega \cap B(z, \eta \cdot (\#\omega)^{-1/d})\Big) \leqslant M.
\end{equation}
\end{defin}
It is easy to see that for a $d$-regular set $A$ there exists a positive constant $C$ such that for any configuration $\omega\subset A$ we have
\begin{equation}
\delta(\omega)\leqslant C\cdot (\#\omega)^{-1/d}.
\end{equation}
If for some $\eta>0$ inequality \eqref{mmm} holds with $M=1$ for every $\omega\in \Omega$, then
$$
\delta(\omega)\geqslant \eta \cdot (\#\omega)^{-1/d};
$$
therefore, we get the optimal order of separation with respect to the cardinality of $\omega$. 

\bigskip

\begin{defin}
A set $A$ is called {\it $d$-admissible} if $A\subset \R^d$ is strongly convex, or $A=\S^d\subset \R^{d+1}$, or $A\subset \S^d$ is a spherical cap.
\end{defin}
We prove the following theorems.
\begin{theorem}\label{thsepar}
If $d\in \mathbb{N}$, $s>d$, and the set $A$ is $d$-admissible, then there exists an $\eta>0$ such that the family $\Omega=\Omega_s:=\{\omega\colon P_s(A; \omega)=\PP_s(A; \#\omega)\}$ is weakly well-separated with parameter $\eta$ and $M=2d-1$.
Moreover, $\eta=\eta_s$ can be chosen so that $\lim_{s\to \infty} \eta_s^{1/s}=1$.

The same is true for $s\in [d-1,d)$ when $A=\S^d$.
\end{theorem}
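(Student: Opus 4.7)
The plan is to argue by contradiction. Suppose no such $\eta > 0$ works. Then one finds an optimal configuration $\omega_N^{*} = \{x_1^{*}, \ldots, x_N^{*}\}$ on $A$, a point $z \in \mathbb{R}^{\ell}$, and at least $2d$ distinct indices $i_1, \ldots, i_{2d}$ with $x_{i_k}^{*} \in B(z, \eta N^{-1/d})$ for $\eta > 0$ arbitrarily small. I will construct a competitor $\omega'$ with $P_s(A; \omega') > P_s(A; \omega_N^{*}) = M^{*}$, contradicting optimality. Throughout, fix a minimizer $y^{*} \in A$ of $F(y) := \sum_j |y - x_j^{*}|^{-s}$, so that $F(y^{*}) = M^{*}$. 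Proposition \ref{babyhole} yields $|y^{*} - x_j^{*}| \geq c_s N^{-1/d}$ for every $j$, and Theorem \ref{botest} gives $M^{*} \leq C_s N^{s/d}$.

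The construction removes one cluster point and inserts a new point at the deepest hole: set $\omega' := (\omega_N^{*} \setminus \{x_{i_1}^{*}\}) \cup \{y^{*}\}$ and let $G(y) := F(y) - |y - x_{i_1}^{*}|^{-s} + |y - y^{*}|^{-s}$. The goal is to show $\inf_{y \in A} G(y) > M^{*}$. I partition $A$ into three regions. In the ball $B(y^{*}, \tau_1 N^{-1/d})$ (with $\tau_1 < C_s^{-1/s}$) the new singularity forces $G(y) \geq |y - y^{*}|^{-s} > M^{*}$. In the ball $B(z, \tau_2 N^{-1/d})$ (with $\tau_2 + \eta < ((2d-1)/C_s)^{1/s}$), each of the $2d - 1$ remaining cluster points in $\omega'$ lies within $(\eta + \tau_2) N^{-1/d}$ of $y$, giving $G(y) \geq (2d - 1)((\eta + \tau_2) N^{-1/d})^{-s} > M^{*}$. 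Outside both balls, I split $G(y) - M^{*} = (F(y) - M^{*}) + (|y - y^{*}|^{-s} - |y - x_{i_1}^{*}|^{-s})$ and show the first, nonnegative term dominates the (possibly negative) second.

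The main obstacle is precisely this transition region: I need a quantitative lower bound of the form $F(y) - M^{*} \gtrsim N^{s/d}$ for $y \in A$ that is bounded away from $y^{*}$ by a fixed multiple of $N^{-1/d}$, large enough to dominate the perturbation $|y - x_{i_1}^{*}|^{-s} \leq (\tau_2 - \eta)^{-s} N^{s/d}$. This is where the geometry of the $d$-admissible set $A$ enters crucially: strict convexity plus the non-degenerate Gaussian curvature of $\partial A$ in the strongly convex case, and rotational symmetry (combined with Proposition \ref{babyhole}) for the sphere and spherical caps. These quantitative gap estimates are precisely what the separate Sections \ref{sectionconvex} and \ref{sectionsphere} are designed to supply. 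The value $M = 2d - 1$ is dictated by the second region: after removing one clustered point, the surviving $2d - 1$ points supply exactly the amount of singular concentration required to beat $M^{*} \leq C_s N^{s/d}$. Finally, since every constraint imposed on $\eta$ is of the form $((2d-1)/C_s)^{1/s}$ times factors whose $s$-th roots converge to $1$, one obtains $\eta_s^{1/s} \to 1$ as $s \to \infty$; the case $s \in [d-1, d)$ on $\S^d$ follows the same template once the sphere version of Proposition \ref{babyhole} is invoked.
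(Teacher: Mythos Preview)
Your strategy is genuinely different from the paper's and, as written, has a real gap in the transition region.

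The paper never proves, and does not need, a quantitative lower bound of the form $F(y)-M^{*}\gtrsim N^{s/d}$ for $y$ outside a small ball around a single minimizer $y^{*}$. Sections \ref{sectionconvex} and \ref{sectionsphere} do something entirely different: they replace the $2d$ clustered points by $2d$ new points placed symmetrically at $\hat{x}\pm\varepsilon e_j$ (with $\hat{x}$ the centroid of the cluster), take $\tilde{y}$ to be a minimizer for the \emph{new} configuration, and compare the two sums via a Taylor expansion of $x\mapsto|\tilde{y}-x|^{-s}$ about $\hat{x}$. The symmetric placement kills the first-order terms, and the second-order contribution is essentially the Laplacian of the Riesz kernel, which is strictly positive because $s>d$. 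This is precisely where the value $M=2d-1$ comes from: one needs $2d$ points to build the $d$ pairs $\hat{x}\pm\varepsilon e_j$. The geometric hypotheses (strong convexity, sphere, cap) are used only to handle the case where $\hat{x}$ is close to $\partial A$ so that some $\hat{x}\pm\varepsilon e_j$ may fall outside $A$; they are not used to produce a gap estimate for $F$.

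Your claimed gap estimate is likely false in the generality you need. Nothing prevents $F$ from having many near-minimizers spread over $A$; for an optimal configuration one rather expects the potential to be nearly constant on a large set. Even if there were a unique minimizer, you would need $F(y)-M^{*}$ to exceed roughly $(\tau_2-\eta)^{-s}N^{s/d}$ for \emph{every} $y$ outside two small balls, which is a strong uniform statement with no apparent mechanism. Finally, your explanation for $M=2d-1$ is not convincing: in your second-region estimate, any number $m\geq 1$ of surviving cluster points would yield $G(y)\geq m((\eta+\tau_2)N^{-1/d})^{-s}>M^{*}$ for $\tau_2$ small enough, so your argument does not single out $2d-1$. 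The paper's Laplacian argument does.
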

The result for strongly convex bodies is proved in Section \ref{sectionconvex}, while the results for the sphere and spherical caps are proved in Section \ref{sectionsphere}.

\begin{rem*}
If $d=1$ and $A=[0,1]$, then for every $s>1$, the family $\Omega=\Omega_s$ is weakly well-separated with some $\eta>0$ and $M=1$.
\end{rem*}

As a consequence of the proof of Theorem \ref{thsepar}, we obtain the following.
\begin{corollary}
Assume $A\subset \R^d$ is a compact set and $s>d$. For every $r>0$, there exists an $\eta>0$ that depends on $r$ with the following property: if for some $z\in A$ we have $B(z,r)\subset A$, then $\#(\omega^*_N \cap B(z, \eta N^{-1/d}))\leqslant 2d-1$, where $\omega_N^*$ is optimal for $\PP_s(A; N)$.
\end{corollary}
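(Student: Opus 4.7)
The plan is to observe that the proof of Theorem~\ref{thsepar} for strongly convex bodies is fundamentally local: for any $z\in A$ it shows that $\omega_N^*$ has at most $2d-1$ points in a ball $B(z,\eta N^{-1/d})$, and the only geometric input is the existence of an interior ball of fixed positive radius near $z$ inside $A$. When $A$ is strongly convex, convexity together with the curvature assumption on $\partial A$ supply such a uniform interior ball near every point (with radius depending on $A$). For the present corollary, the hypothesis $B(z,r)\subset A$ provides the interior ball directly, with $r$ playing the role of the uniform constant; so one simply runs the same argument with $B(z,r)$ in place of the intrinsic interior ball of the strongly convex body.

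The contradiction step goes as follows. Assume $\#\bigl(\omega_N^*\cap B(z,\eta N^{-1/d})\bigr)\ge 2d$. Choosing $\eta=\eta(r)$ small enough that $\eta N^{-1/d}\le r/4$, a volume-based pigeonhole inside $B(z,r/2)\subset A$ produces a ``hole'' $B(w,\tau N^{-1/d})\subset B(z,r)\setminus\omega_N^*$ with $\tau=\tau(r)>0$. By Proposition~\ref{babyhole}, every point $y^*\in A$ attaining the infimum of $\sum_{j=1}^N|y-x_j^*|^{-s}$ satisfies $|y^*-x_j^*|\ge c_s N^{-1/d}$ for all $j$, which controls the contribution of any single clustered point to the sum at $y^*$. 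Replacing one clustered point $x_{j_0}^*$ by $w$ produces a competitor $\omega_N'\subset A$, and the rearrangement estimate developed in Section~\ref{sectionconvex} shows that $P_s(A;\omega_N')>P_s(A;\omega_N^*)$, contradicting the optimality of $\omega_N^*$.

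The main obstacle is to verify that the constants in the rearrangement estimate used for Theorem~\ref{thsepar} depend on the ambient set $A$ only through the radius of an available interior ball, together with $s$ and $d$. This is essentially automatic in the strongly convex case, since the rearrangement is carried out inside an interior ball in the first place, and that is the only place where the geometry of $A$ enters. Tracking this dependence yields $\eta=\eta(r)$, and the claim follows.
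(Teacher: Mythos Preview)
Your first paragraph captures the right high-level idea: the corollary is indeed just Case~1 of the proof of Theorem~\ref{thsepar}, which only requires that the perturbed points $\tilde x_j=\hat x\pm\ep e_k$ remain in $A$. The hypothesis $B(z,r)\subset A$ guarantees this directly once $\eta$ and $\tau$ are chosen small enough (depending on $r$), so Case~2 never arises and the contradiction \eqref{contr321} goes through verbatim.

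However, your second paragraph misdescribes the actual mechanism of Section~\ref{sectionconvex}, and the alternative you sketch does not work. The paper does \emph{not} find a hole $w$ by pigeonhole and swap a single clustered point to $w$. Instead it replaces \emph{all} $2d$ clustered points $x_1,\dots,x_{2d}$ by the symmetric configuration $\hat x\pm\ep e_1,\dots,\hat x\pm\ep e_d$ around the centroid $\hat x$; the centroid condition kills the first-order terms in the Taylor expansion, and the second-order term is (up to error) the Laplacian $\Delta_x |y-x|^{-s}$, which is strictly positive precisely because $s>d$. This Laplacian positivity is the engine of the contradiction.

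Your one-point swap would require, at the new minimizer $\tilde y$ of the competitor $\omega_N'$, the inequality $|\tilde y-w|<|\tilde y-x_{j_0}|$. But $w\in\omega_N'$, so Proposition~\ref{babyhole} gives $|\tilde y-w|\ge c_sN^{-1/d}$, whereas $x_{j_0}\notin\omega_N'$ and the only lower bound you can extract for $|\tilde y-x_{j_0}|$ (via the remaining clustered points and Proposition~\ref{babyhole}) is $(c_s-2\eta)N^{-1/d}$. These bounds are in the wrong order, so the swap inequality need not hold. There is no ``rearrangement estimate'' of the type you invoke in Section~\ref{sectionconvex}; the argument there is the symmetric $2d$-point perturbation and Taylor/Laplacian computation described above.
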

\begin{rem*}
As we shall show in Lemma \ref{notnearboundary}, if $A$ is strongly convex then no points from $\omega_N^*$ can lie on the boundary $\partial A$; moreover, the distance from any point in $\omega_N^*$ to $\partial A$ is at least of the order $N^{-2/d}$. 
\end{rem*}
The next theorem deals with the unit cube. For this case, our methods impose a stronger condition on the Riesz parameter $s$.
\begin{theorem}\label{thseparcube}
If $[0,1]^d\subset \R^d$, $d\geqslant 2$, denotes the unit cube and $s>3d-4$, then there exists a $\eta>0$ such that the family $\Omega=\Omega_s=\{\omega\colon P_s(A; \omega)=\PP_s(A; \#\omega)\}$ is weakly well-separated with parameter $\eta$ and $M=2d-1$.
Moreover, $\eta=\eta_s$ can be chosen so that $\lim_{s\to \infty} \eta_s^{1/s}=1$.
\end{theorem}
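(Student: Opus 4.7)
My plan is to argue by contradiction, along the lines of the proof of Theorem~\ref{thsepar}, while accommodating the non-smooth boundary of $[0,1]^d$. Suppose an optimal configuration $\omega_N^*=\{x_1,\ldots,x_N\}$ for $\PP_s([0,1]^d;N)$ contains $2d$ points in some ball $B(z_0,\eta N^{-1/d})$, where $\eta=\eta_s$ will be chosen (depending on $s$) at the end. Let $y^*\in[0,1]^d$ satisfy $P_s([0,1]^d;\omega_N^*)=\sum_{j}|y^*-x_j|^{-s}$. Proposition~\ref{babyhole} yields $|y^*-x_j|\geq c_s N^{-1/d}$ for every $j$, with $c_s^{1/s}\to 1$ as $s\to\infty$; this is the key quantitative input, since it provides a uniform ``safety radius'' around $y^*$ that is not spoiled by the clustering.

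Next I would stratify by the location of the cluster center $z_0$ relative to the faces of the cube. Writing $\partial[0,1]^d$ as a union of $k$-dimensional faces for $k=0,1,\ldots,d-1$, let $F$ be the lowest-dimensional face within distance $O(N^{-1/d})$ of $z_0$ (taking $F=[0,1]^d$ if $z_0$ is well inside). Each stratum gives rise to its own perturbation mechanism: in the interior, the argument essentially reduces to that of a convex body; on a facet ($k=d-1$), there is a unique inward normal along which to carry out the relocation; on lower-dimensional faces the inward admissible cone shrinks as $k$ decreases. In every case the strategy is to pick one of the clustered points $x^*$, replace it by a new point $x'\in[0,1]^d$, and to show that $P_s\big([0,1]^d;(\omega_N^*\setminus\{x^*\})\cup\{x'\}\big)>P_s([0,1]^d;\omega_N^*)$, contradicting optimality. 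The candidate $x'$ is produced by a pigeonhole argument: among the $2d$ clustered points there must be a direction from $y^*$ that is underoccupied, and one may push $x^*$ along this direction toward $y^*$ to gain a large increase in $|y^*-x'|^{-s}$ while maintaining $x'\in[0,1]^d$ and $|x'-x_j|\gtrsim N^{-1/d}$ for every remaining $x_j$.

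The principal difficulty, and the source of the stronger hypothesis $s>3d-4$, is the case in which $z_0$ lies near a low-dimensional face of $[0,1]^d$. There the admissible perturbation direction is forced into a thin cone, so the geometric gain from moving $x^*$ toward $y^*$ is smaller, while the potential loss from the perturbation at nearby test points $y$ is still governed by the kernel's behaviour in all $d$ directions. A careful accounting of the Riesz exponent needed for the improvement at $y^*$ to dominate all loss contributions---worst case, a $(d-1)$-dimensional facet effect stacked against the two transverse constraints imposed by the ambient cube structure---is what produces the threshold $s>3d-4$ (which correctly reduces to $s>d$ when $d=2$). The final claim $\eta_s^{1/s}\to 1$ is then obtained by tracking the $s$-dependence of all quantitative constants in the preceding steps, most notably those arising from Proposition~\ref{babyhole}.
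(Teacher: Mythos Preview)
Your proposal has a genuine gap at the core mechanism. You plan to move \emph{one} clustered point $x^*$ toward the minimizer $y^*$ and argue that this raises $|y^*-x'|^{-s}$; but polarization is the infimum over \emph{all} $y$, so increasing the sum at $y^*$ says nothing about the new infimum, which will typically be attained elsewhere. The paper does not attempt to produce a configuration with strictly larger polarization. Instead it replaces \emph{all} $2d$ clustered points by a carefully symmetric set $\tilde x_j=\hat x\pm\varepsilon e_j$ centered at the centroid $\hat x$ of the cluster, takes $\tilde y$ to be the minimizer for the \emph{new} configuration, and uses optimality of $\omega_N^*$ to get $\sum_{j\le 2d}|\tilde y-\tilde x_j|^{-s}\le \sum_{j\le 2d}|\tilde y-x_j|^{-s}$. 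Taylor expanding both sides about $\hat x$, the right side has no first-order term (centroid), while the symmetric choice on the left forces the second-order contribution to be (up to lower order) the Laplacian of $x\mapsto|\tilde y-x|^{-s}$, which is strictly positive. Your ``pigeonhole on directions from $y^*$'' idea plays no role here and, as stated, is not a substitute for this Laplacian/subharmonicity structure.

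The threshold $s>3d-4$ also does not arise from the vague ``facet stacked against two transverse constraints'' picture. The paper first proves a lemma (Lemma~\ref{noangle}) that optimal points cannot cluster near a \emph{vertex}, guaranteeing at least one coordinate direction in which $\hat x\pm\varepsilon e_j\in[0,1]^d$. For each of the at most $d-1$ ``bad'' directions one sends \emph{both} replacement points to $\hat x+\varepsilon e_j$; the first-order terms no longer cancel, but since $\tilde y(j)\ge 0$ and $\hat x(j)<\varepsilon$ one has $|\tilde y(j)-\hat x(j)|\le\varepsilon$, so each bad direction subtracts exactly $2$ from the Laplacian constant $s+2-d$. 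The resulting inequality $s+2-d-2(d-1)>0$ is precisely $s>3d-4$. Your proposal lacks both the vertex lemma and this bookkeeping, so the origin of the exponent is not actually established.
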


Regarding the covering radius of $N$-point configurations having a weak separation property we prove the following.

\begin{theorem}\label{thcovering}
Let $\ell$, $d$ and $s$ be positive integers with $\ell\geqslant d$ and $s>d$. Suppose the compact set $A\subset \R^\ell$ with $\H_d(A)>0$ is contained in some $d$-regular compact set $\tilde{A}$. If the $N$-point configuration $\omega_N\subset A$ is such that for some $\eta>0$ and $M\in\mathbb{N}$ we have $\#(B(z, \eta N^{-1/d})\cap \omega_N)\leqslant M$ for all $z\in A$, then 
\begin{equation}
\rho_A(\omega_N)=\max_{y\in A}\min_{x\in \omega_N} |y-x|\leqslant R_s N^{-1/d},
\end{equation}
where
\begin{equation}
R_s:=\left(\frac{7^s \cdot C_d\cdot M\cdot s}{5^s \cdot p_s\cdot (s- d) \cdot\eta^d}\right)^{\frac{1}{s-d}},
\end{equation}
$C_d$ is a positive constant that depends only on $d$ and $A$, and $p_s$ is any positive constant such that 
\begin{equation}\label{pspsps}
\inf_{y\in A} \sli_{x\in \omega_N} \frac{1}{|y-x|^s} \geqslant p_s N^{s/d}.
\end{equation}
\end{theorem}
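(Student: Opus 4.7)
My plan is to combine the hypothesized lower bound \eqref{pspsps} at a cleverly chosen test point with an upper bound on the same sum that exploits the weak-separation assumption together with the $d$-regularity of the ambient set $\tilde{A}$. First I would choose $y^{*}\in A$ realizing the covering radius, so that $r:=\rho_A(\omega_N)=\min_{x\in\omega_N}|y^{*}-x|$ and hence $B(y^{*},r)\cap\omega_N=\emptyset$. Taking $y=y^{*}$ in \eqref{pspsps} gives
\begin{equation*}
p_s N^{s/d}\leq \sum_{x\in\omega_N}\frac{1}{|y^{*}-x|^{s}},
\end{equation*}
and everything else is devoted to upper-bounding the right-hand side by a decreasing function of $r$.

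The key intermediate step is a counting lemma: for all $R\geq\eta N^{-1/d}$,
\begin{equation*}
\#\bigl(\omega_N\cap B(y^{*},R)\bigr)\leq \frac{C_d M R^{d} N}{\eta^{d}}
\end{equation*}
for some constant $C_d$ depending only on $d$ and the regularity constants of $\tilde{A}$. This follows from a standard Vitali-type packing: take a maximal $\eta N^{-1/d}$-separated subset $\{z_j\}\subset \omega_N\cap B(y^{*},R)$; the balls $B(z_j,\tfrac{1}{2}\eta N^{-1/d})$ are pairwise disjoint, each with $\mu$-measure of order $(\eta N^{-1/d})^{d}$ by the lower $d$-regularity of $\tilde{A}$ applied at the centers $z_j\in A\subset\tilde{A}$, while all lie in a ball whose $\mu$-mass is of order $R^{d}$ by the upper regularity. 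This bounds the number of such $z_j$, and by maximality the balls $B(z_j,\eta N^{-1/d})$ cover $\omega_N\cap B(y^{*},R)$, each contributing at most $M$ points by hypothesis.

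Next I would decompose via annuli $\mathcal{A}_k:=\{x\in\omega_N:\alpha^{k}r\leq|y^{*}-x|<\alpha^{k+1}r\}$ for a multiplier $\alpha>1$ (the choice $\alpha=7/5$ is what produces the specific constants in the statement). The counting lemma controls $\#\mathcal{A}_k$, and on $\mathcal{A}_k$ one has $|y^{*}-x|^{-s}\leq(\alpha^{k}r)^{-s}$; summing the geometric series (which converges because $s>d$) and bounding $1/(1-\alpha^{d-s})$ by a factor of the form $\alpha^{s-d}\cdot s/(s-d)$, and absorbing harmless constants into $C_d$, yields
\begin{equation*}
\sum_{x\in\omega_N}\frac{1}{|y^{*}-x|^{s}}\leq \frac{7^{s} C_d M s N}{5^{s}(s-d)\eta^{d} r^{s-d}}.
\end{equation*}
Comparing this upper bound with $p_s N^{s/d}$ and solving for $r$ gives exactly $r\leq R_s N^{-1/d}$ with $R_s$ as defined.

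The main obstacle is the counting lemma: the weak-separation hypothesis only directly controls balls of the small radius $\eta N^{-1/d}$, so upgrading it to balls of arbitrary radius $R$ requires both the upper and lower $d$-regularity of $\tilde{A}$ (not just of $A$ itself), and the constant $C_d$ must track cleanly through the annulus sum. The rest of the argument is essentially bookkeeping, though selecting $\alpha=7/5$ and estimating the geometric-series factor in a way that produces the precise expression for $R_s$ in the statement requires some care.
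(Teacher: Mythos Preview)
Your argument is correct, but it proceeds along a different line than the paper. The paper does not prove a counting lemma and does not use an annular decomposition. Instead it invokes the Christ decomposition of the $d$-regular set $\tilde A$ into ``cubes'' $Q_\alpha$ at scale $\eta N^{-1/d}$; each $Q_\alpha$ then contains at most $M$ points of $\omega_N$, and for $x_j\in Q_\alpha$ one has $|y-x_j|^{-s}\leqslant (7/5)^s\,\mu(Q_\alpha)^{-1}\int_{Q_\alpha}|y-x|^{-s}\,d\mu(x)$. Summing replaces the discrete Riesz sum by $(7/5)^s M\eta^{-d}N$ times the integral $\int_{\tilde A\setminus B(y,(3/5)\rho)}|y-x|^{-s}\,d\mu(x)$, which the upper $d$-regularity bounds by $c\,\tfrac{s}{s-d}((3/5)\rho)^{d-s}$; this is where the factor $s/(s-d)$ and the numerical constants originate. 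Your route is more elementary in that it avoids the Christ construction entirely, trading it for a Vitali packing and a geometric series; the paper's route is more potential-theoretic, passing through a continuous integral. Both rely on the $d$-regularity of $\tilde A$ in essentially the same places (lower regularity to bound cube/ball masses from below, upper regularity for the large-scale estimate), and both arrive at the same shape of $R_s$ once harmless constants are absorbed into $C_d$. One small point: your counting lemma needs $R\geqslant\eta N^{-1/d}$, so---just as the paper does---you should note at the outset that one may assume $\rho_A(\omega_N)\geqslant c\,\eta N^{-1/d}$ for a suitable constant, since otherwise the conclusion is immediate.
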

From this theorem and Theorem \ref{thsepar} we deduce the following.
\begin{corollary}\label{coveringclopen}
If the set $A$ is $d$-admissible and $s>d$, then there exists a positive constant $R_s$ such that for any $N$-point configuration $\omega^*_N$ that is optimal for $\PP_s(A; N)$, we have $\rho_A(\omega^*_N)\leqslant R_s N^{-1/d}$. Moreover, there exists a positive constant $R_0$ such that for large values of $s$ we have $R_s\leqslant R_0$.

The same is true if $A=[0,1]^d$ and $s> 3d-4$.
\end{corollary}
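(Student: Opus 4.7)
The corollary is essentially a mechanical combination of Theorems \ref{thsepar}, \ref{thseparcube}, and \ref{thcovering}. First, I would note that each set in question is itself $d$-regular, so that one may take $\tilde A = A$ in Theorem \ref{thcovering}: strongly convex bodies and $[0,1]^d$ satisfy $\H_d(B(x,r)\cap A) \asymp r^d$ for $x\in A$ and $r<\diam A$ (the lower bound follows from an interior ball condition, the upper bound from $A\subset\R^d$), and the same holds for $\S^d$ and spherical caps with respect to surface measure. Next, fixing an optimal configuration $\omega_N^*$ for $\PP_s(A;N)$, Theorem \ref{thsepar} (in the $d$-admissible case) or Theorem \ref{thseparcube} (for $A=[0,1]^d$ with $s>3d-4$) yields $\eta_s>0$ satisfying $\eta_s^{1/s}\to 1$ such that $\#\bigl(B(z,\eta_s N^{-1/d})\cap \omega_N^*\bigr)\leq 2d-1$ for every $z\in\R^\ell$.

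To apply Theorem \ref{thcovering} we still need a constant $p_s$ realising the hypothesis \eqref{pspsps} whose $s$-th root is bounded below uniformly in $s$. This is the only step that requires genuine thought, but it is short: $d$-regularity of $A$ together with a standard packing/volume argument yields a constant $D = D(A,d)$, independent of $N$, with $\rho_A(N)\leq D N^{-1/d}$, so any best-covering configuration $\tilde\omega_N$ satisfies $\min_{x\in\tilde\omega_N}|y-x|\leq D N^{-1/d}$ for every $y\in A$. Therefore $P_s(A;\tilde\omega_N)\geq D^{-s}N^{s/d}$, and by the definition of the polarization constant $\PP_s(A;N) = P_s(A;\omega_N^*) \geq D^{-s}N^{s/d}$. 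Hence $p_s := D^{-s}$ fulfills \eqref{pspsps}, with $p_s^{1/s} = 1/D$ independent of $s$.

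Substituting $M = 2d-1$, $\eta = \eta_s$, and $p_s = D^{-s}$ into the formula for $R_s$ in Theorem \ref{thcovering} produces the desired bound $\rho_A(\omega_N^*)\leq R_s N^{-1/d}$. For the uniformity-in-$s$ assertion, I would take logarithms:
$$
\log R_s \;=\; \frac{1}{s-d}\Bigl[\,s\log\tfrac{7D}{5} \;+\; \log(C_d M s) \;-\; \log(s-d) \;-\; d\log\eta_s\,\Bigr].
$$
The first summand tends to $\log(7D/5)$ since $s/(s-d)\to 1$, the two logarithmic terms vanish, and the last term vanishes because $\eta_s^{1/s}\to 1$ forces $\log\eta_s/s\to 0$. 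Consequently $\limsup_{s\to\infty} R_s \leq 7D/5$, which provides the desired $R_s\leq R_0$ for large $s$. The principal obstacle, as noted, is producing the $s$-uniform lower bound on $\PP_s(A;N)$ in the middle step; every other ingredient is direct assembly.
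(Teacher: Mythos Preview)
Your proposal is correct and follows essentially the same route as the paper: both arguments verify the hypotheses of Theorem \ref{thcovering} by invoking Theorems \ref{thsepar}/\ref{thseparcube} for weak separation and by producing the lower bound $p_s = D^{-s}$ via an $N$-point covering configuration with covering radius $\leq D N^{-1/d}$ obtained from a packing/volume argument. The paper makes the construction of that covering configuration explicit (a maximal $aN^{-1/d}$-separated set), while you appeal to it as a standard fact; your explicit logarithmic limit computation for $R_s$ is slightly more detailed than the paper's concluding sentence, but the substance is identical.
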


Corollary \ref{coveringclopen} implies that if $A$ is an $d$-admissible set or a unit cube, then $\rho_A(N)\leqslant R_s N^{-1/d}$ for some positive constant $R_s$. On the other hand, it is easy to see that in this case, for some positive constant $b$, we have $\rho_A(N)\geqslant bN^{-1/d}$. Fine estimates on the constant $R_s$ for large values of $s$ result in the following theorem dealing with the asymptotic behavior of $\PP_s(A; N)^{1/s}$ as $s\to \infty$.

\begin{theorem}\label{bestcovering}
Suppose the set $A$ is $d$-admissible or $A=[0,1]^d$.
Then with $\sigma_{s,d}$ as defined in Theorem \ref{thborodachovetal}, the following limits exist as positive real numbers and satisfy	
\begin{equation}\label{princess}
\lim_{s\to \infty} \left(\frac{\sigma_{s,d}}{\H_d(A)^{s/d}}\right)^{1/s}=\lim_{s\to \infty} \left(\lim_{N\to \infty}\frac{\PP_s(A; N)}{N^{s/d}}\right)^{1/s} = \frac{1}{\lim_{N\to \infty} N^{1/d}\rho_A(N)}.
\end{equation}
In particular, taking $A=[0,1]^d$ we obtain
\begin{equation}
\lim_{s\to \infty} \sigma_{s,d}^{1/s} = \frac{1}{\lim_{N\to\infty} N^{1/d}\rho_{[0,1]^d}(N)} = \left(\frac{V_d}{\Gamma_d}\right)^{1/d},
\end{equation}
where the constant $\Gamma_d$ is the optimal covering density \footnote{The problem of finding $\Gamma_d$ is known in \cite{ConwSlBook} as ``finding the thinnest covering of $\R^d$.''} of the space $\R^d$ (see \cite[Chapter 2]{ConwSlBook} and Section \ref{appendix}) and $V_d:=\H_d(\mathbb{B}^d)=\pi^{d/2}/\Gamma(d/2+1)$.
\end{theorem}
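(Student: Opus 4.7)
The first equality in \eqref{princess} is simply the definition of $\sigma_{s,d}$ furnished by Theorem \ref{thborodachovetal}, so the substance of the statement is the second equality, which I obtain by squeezing $\PP_s(A;N)$ between matching lower and upper bounds in terms of $\rho_A(N)$. For the lower bound, let $\omega_N^{\mathrm{cov}}\subset A$ attain $\rho_A(N)$. Since every $y\in A$ is within $\rho_A(N)$ of some point of $\omega_N^{\mathrm{cov}}$,
\[
\PP_s(A;N)\ \ge\ P_s(A;\omega_N^{\mathrm{cov}})\ \ge\ \rho_A(N)^{-s}.
\]
Dividing by $N^{s/d}$ and extracting $s$-th roots gives $(\PP_s(A;N)/N^{s/d})^{1/s}\ge 1/(N^{1/d}\rho_A(N))$, so (using that $L:=\lim_N N^{1/d}\rho_A(N)$ exists as a positive finite real --- classical for $[0,1]^d$ and well known for the other admissible sets) one obtains $\liminf_s(\sigma_{s,d}/\H_d(A)^{s/d})^{1/s}\ge 1/L$.

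For the upper bound, fix an optimal $\omega_N^*$ and choose a ``deepest hole'' $y_0\in A$ with $\min_j|y_0-x_j^*|=\rho_A(\omega_N^*)=:\rho$. Theorem \ref{thsepar} makes $\omega_N^*$ weakly well separated with parameter $\eta_s$ and $M=2d-1$, and Corollary \ref{coveringclopen} yields $N^{1/d}\rho\le R_s\le R_0$ for all sufficiently large $s$. Since the sets in question are $d$-regular, a Vitali-type covering count gives
\[
\#\bigl(\omega_N^*\cap B(y_0,r)\bigr)\ \le\ C_A M\bigl(r N^{1/d}/\eta_s\bigr)^d\quad\text{for }r\ge\eta_s N^{-1/d}.
\]
Bounding $\PP_s(A;N)\le\sum_j|y_0-x_j^*|^{-s}$ and splitting the sum over dyadic shells $\{2^k\rho\le|y_0-x_j^*|<2^{k+1}\rho\}$ gives, since $s>d$, the convergent geometric-series estimate
\[
\PP_s(A;N)\ \le\ \frac{C_A M\,2^d\,N}{\eta_s^d(1-2^{d-s})}\cdot\rho^{d-s}.
\]
Using $\rho\ge\rho_A(N)$ together with the algebraic identity $N\rho_A(N)^{d-s}=N^{s/d}(N^{1/d}\rho_A(N))^{d-s}$, and absorbing the $(N^{1/d}\rho)^{d/s}$-correction via $N^{1/d}\rho\le R_s$, taking $s$-th roots yields
\[
\Bigl(\tfrac{\PP_s(A;N)}{N^{s/d}}\Bigr)^{1/s}\ \le\ \frac{R_s^{d/s}}{N^{1/d}\rho_A(N)}\cdot\Bigl(\frac{C_A M\,2^d}{\eta_s^d(1-2^{d-s})}\Bigr)^{1/s}.
\]
Sending $N\to\infty$ and then $s\to\infty$, the factor $R_s^{d/s}$ tends to $1$ (since $R_s\le R_0$) and the bracketed $s$-th root also tends to $1$ (because $\eta_s^{d/s}\to 1$ and the other ingredients are bounded), producing $\limsup_s(\sigma_{s,d}/\H_d(A)^{s/d})^{1/s}\le 1/L$ and closing the squeeze.

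For the cube, $\H_d([0,1]^d)=1$ and the Appendix identifies $\lim_N N^{1/d}\rho_{[0,1]^d}(N)=(\Gamma_d/V_d)^{1/d}$ via the definition of the optimal covering density $\Gamma_d$ of $\R^d$; substituting into \eqref{princess} yields the stated formula $\lim_s\sigma_{s,d}^{1/s}=(V_d/\Gamma_d)^{1/d}$. The genuinely delicate point in the whole argument is that the constants $\eta_s$, $R_s$ from Theorems \ref{thsepar}--\ref{thcovering} depend on $s$ and could in principle decay or grow polynomially as $s\to\infty$; the squeeze closes only because those statements were crafted to guarantee that their $s$-th roots approach $1$, which is precisely what turns the shell-sum bookkeeping into a sharp asymptotic.
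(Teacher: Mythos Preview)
Your argument follows the same two-step squeeze as the paper's proof: the trivial lower bound $\PP_s(A;N)\ge\rho_A(N)^{-s}$, and an upper bound obtained by evaluating the potential of an optimal $\omega_N^*$ at a deepest hole $y_0$ and summing over shells, using weak separation to count points in each shell. Your dyadic shells $\{2^k\rho\le|y_0-x|<2^{k+1}\rho\}$ are a harmless variant of the paper's annuli $B(y_0,n\rho)\setminus B(y_0,(n-1)\rho)$; both yield a series whose $s$-th root tends to $1$.

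There is, however, one genuine gap. You \emph{assume} that $L=\lim_{N\to\infty}N^{1/d}\rho_A(N)$ exists, calling it ``classical for $[0,1]^d$ and well known for the other admissible sets''. But the existence of this limit is part of what the theorem asserts, and the paper does not take it for granted: it proves the lower bound in the form
\[
\liminf_{s\to\infty}\Bigl(\lim_{N\to\infty}\tfrac{\PP_s(A;N)}{N^{s/d}}\Bigr)^{1/s}\ \ge\ \frac{1}{\liminf_{N\to\infty}N^{1/d}\rho_A(N)}
\]
and the upper bound in the form
\[
\limsup_{s\to\infty}\Bigl(\lim_{N\to\infty}\tfrac{\PP_s(A;N)}{N^{s/d}}\Bigr)^{1/s}\ \le\ \frac{1}{\limsup_{N\to\infty}N^{1/d}\rho_A(N)},
\]
and then observes that these two inequalities \emph{force} $\liminf_N=\limsup_N$, thereby establishing the existence of $L$. (Note also that the Appendix you cite for the cube explicitly deduces the existence of $\lim_N N\rho_{[0,1]^d}(N)^d$ \emph{from} Theorem~\ref{bestcovering}, so invoking it here would be circular.) Your argument is easily repaired by carrying $\liminf$/$\limsup$ throughout instead of $L$; once you do that, your proof and the paper's coincide in substance.
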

We remark that $\Gamma_1=1$ and $\Gamma_2=2\pi/\sqrt{27}$.

A consequence Theorem \ref{bestcovering} is that, in the limit as $s\to \infty$, the covering radius of optimal Riesz $s$-polarization configurations become asymptotically best possible.
\begin{corollary}
Suppose the set $A$ is $d$-admissible or $A=[0,1]^d$. For every $s>3d-4$, let $\omega_N^s$ be an $N$-point configuration such that $\PP_s(A; N)=P_s(A; \omega_N^s)$. Then
$$
\lim_{s\to\infty}\lim_{N\to\infty}N^{1/d}\rho_A(\omega_N^s) = \lim_{N\to \infty}N^{1/d}\rho_A(N).
$$
\end{corollary}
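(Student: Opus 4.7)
The plan is to sandwich $a_N:=N^{1/d}\rho_A(\omega_N^s)$ between $L:=\lim_{N\to\infty}N^{1/d}\rho_A(N)$ and quantities that tend to $L$ as $s\to\infty$. The lower bound is immediate from the extremal definition of $\rho_A(N)$: since $\rho_A(\omega_N^s)\ge \rho_A(N)$, one has $\liminf_N a_N\ge L$ for every admissible $s$, hence $\liminf_s\liminf_N a_N\ge L$. The substance of the corollary is therefore the matching upper bound $\limsup_s\limsup_N a_N\le L$.

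To establish that, fix $s>3d-4$, set $\rho_N:=\rho_A(\omega_N^s)$, and pick $y_0\in A$ with $\min_j|y_0-x_j^s|=\rho_N$. By the extremality of $\omega_N^s$, $\PP_s(A;N)\le\sum_j|y_0-x_j^s|^{-s}$. Theorem \ref{thsepar} says that every ball of radius $\eta_s N^{-1/d}$ contains at most $M=2d-1$ points of $\omega_N^s$, and $\eta_s^{1/s}\to 1$; covering a ball of radius $t\ge\eta_sN^{-1/d}$ in $\R^\ell$ by such smaller balls then gives $\#\{j:|y_0-x_j^s|\le t\}\le C_dM(tN^{1/d}/\eta_s)^d$. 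Plugging this into the layer-cake identity
\[
\sum_j|y_0-x_j^s|^{-s}=s\int_{\rho_N}^{\infty}t^{-s-1}\,\#\{j:|y_0-x_j^s|\le t\}\,dt,
\]
splitting the range at the threshold $T_s=\eta_s(C_dM)^{-1/d}$ where the volumetric bound meets $N$, and using $s>d$ to absorb the tail, one obtains
\[
\PP_s(A;N)\le \frac{sC_dM}{(s-d)\eta_s^d}\,N\,\rho_N^{-(s-d)}+c_sN,
\]
where $c_sN$ is the contribution of the tail beyond $T_s$.

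Dividing by $N^{s/d}$, the residual $c_sN^{1-s/d}$ vanishes as $N\to\infty$ because $s>d$, while Theorem \ref{thborodachovetal} identifies $\lim_N\PP_s(A;N)/N^{s/d}=\sigma_{s,d}/\H_d(A)^{s/d}$. Inverting the resulting inequality gives
\[
\limsup_N a_N\;\le\;\left(\frac{sC_dM\,\H_d(A)^{s/d}}{(s-d)\eta_s^d\,\sigma_{s,d}}\right)^{1/(s-d)}.
\]
Taking $s\to\infty$, the factors $(sC_dM/(s-d))^{1/(s-d)}$ and $\eta_s^{-d/(s-d)}=(\eta_s^{1/s})^{-sd/(s-d)}$ both tend to $1$, the second one precisely because $\eta_s^{1/s}\to 1$; meanwhile $\H_d(A)^{s/(d(s-d))}\to\H_d(A)^{1/d}$ and $\sigma_{s,d}^{-1/(s-d)}=(\sigma_{s,d}^{1/s})^{-s/(s-d)}\to L/\H_d(A)^{1/d}$ by Theorem \ref{bestcovering}. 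The product of the four limits is exactly $L$, yielding $\limsup_s\limsup_N a_N\le L$.

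The main obstacle is the bookkeeping of the $s$-dependent constants under the $(s-d)$-th root as $s\to\infty$: a priori $\eta_s$ could drift and spoil the limit, and the reason it does not is exactly the rate $\eta_s^{1/s}\to 1$ supplied by Theorem \ref{thsepar}, which cancels the exponent $d/(s-d)$. A subsidiary subtlety is the complementary regime $\rho_N<\eta_sN^{-1/d}$, in which the layer-cake splitting needs an extra piece on $[\rho_N,\eta_sN^{-1/d}]$; this is handled by preemptively replacing $\eta_s$ with $\min(\eta_s,L+\varepsilon)$ (still with $\eta_s^{1/s}\to 1$), so that the alternative regime forces $a_N<L+\varepsilon$ directly, and letting $\varepsilon\downarrow 0$ at the end.
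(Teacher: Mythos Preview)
Your argument is correct and runs closely parallel to the paper's. Both upper bounds rest on the same mechanism: use the weak separation from Theorem~\ref{thsepar} to bound how many points of $\omega_N^s$ can sit in a ball around the covering-radius witness $y_0$, convert this into an upper bound on $\PP_s(A;N)$ in terms of $\rho_A(\omega_N^s)$, and then feed in Theorem~\ref{bestcovering} to control the $s$-dependent constants. The paper packages this via a discrete annulus decomposition $B_n=B(y,n\rho_N)\setminus B(y,(n-1)\rho_N)$ (equations \eqref{aibolit}--\eqref{petuh}) and takes an $s$-th root; you use the continuous layer-cake identity and take an $(s-d)$-th root. The substance is the same.

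Your lower bound is actually more direct than the paper's: the paper routes the inequality through $\PP_s(A;N)\ge \rho_A(\omega_N^s)^{-s}$ and then invokes \eqref{princess}, whereas you simply observe $\rho_A(\omega_N^s)\ge\rho_A(N)$ pointwise in $N$, which immediately gives $\liminf_N a_N\ge L$. That shortcut is a small but genuine simplification. Your handling of the regime $\rho_N<\eta_s N^{-1/d}$ via shrinking $\eta_s$ to $\min(\eta_s,L+\varepsilon)$ is also a clean device; the paper avoids this case differently, by first invoking Corollary~\ref{coveringclopen} to get $\rho_N\le R_0N^{-1/d}$ uniformly, which lets the annuli $B_n$ be covered by a number of $\eta_s$-balls depending only on $n$ and $s$.
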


\bigskip

\section{Weak separation for strongly convex bodies}\label{sectionconvex}
In what follows, we always assume $s>d$ and $A\subset \R^d$ is a strongly convex body. 
By $\overline{B(x,r)}$ we denote the closure of $B(x,r)$ and $I_{d-1}$ denotes the $(d-1)\times (d-1)$ identity matrix. Furthermore, the $j$'th coordinate of a point $x\in \R^d$ will be denoted by $x(j)$; we also denote by $x'$ the $(d-1)$-dimensional vector that consists of the first $d-1$ coordinates of $x$; thus, $x=(x', x(d))$. By $e_1, \ldots, e_d$ we denote the canonical basis in $\R^d$. If we have a $d\times d$ matrix $M$, we put	
$$
(Mx, x):=(Mx)\cdot x, \;\; \; \; \; x\in \R^d.
$$

To establish Theorem \ref{thsepar} we begin with two lemmas about the behavior of extremal configurations for $\PP_s(A; N)$ near the boundary $\partial A$.
\begin{lemma}\label{notnearboundary}
There exists a constant $b_s>0$ with the following property: for all $N\geqslant 1$, if $\omega^*_N$ is an extremal configuration for $\PP_s(A; N)$ and $x\in \omega^*_N$, then $\textup{dist}(x, \partial A)>b_s N^{-2/d}$ . Moreover, $b_s$ can be chosen so that $\lim_{s\to \infty} b_s^{1/s} =1$.
\end{lemma}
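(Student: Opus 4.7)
The plan is to argue by contradiction: suppose some $x_j\in \omega_N^*$ satisfies $\delta:=\dist(x_j,\partial A)<b_s N^{-2/d}$ for an appropriately chosen $b_s$; I will exhibit a small inward normal perturbation of $x_j$ that strictly raises the polarization, contradicting the optimality of $\omega_N^*$. Let $y_0\in\partial A$ be the nearest boundary point to $x_j$ and $\nu$ the inward unit normal at $y_0$, so $x_j=y_0+\delta\nu$. Strong convexity of $A$ supplies constants $C_A,t_0>0$ (depending only on $A$) such that, for every $y_0\in\partial A$ and every $t\in(0,t_0]$,
\[
A\cap\{y:(y-y_0)\cdot \nu\leqslant t\}\subset \overline{B(y_0, C_A\sqrt{t})};
\]
this is a standard consequence of the local paraboloidal shape of $\partial A$ together with compactness.

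Let $M^*\subset A$ denote the set of minimizers of $\Phi(y):=\sum_i |y-x_i|^{-s}$ over $A$. By Proposition~\ref{babyhole}, every $y^*\in M^*$ satisfies $|y^*-x_j|\geqslant c_s N^{-1/d}$. Set $b_s:=\min\{t_0,\,c_s^2/(4(C_A+1)^2)\}$ and assume $\delta<b_s N^{-2/d}$. For any $y\in A$ lying in the outward cap $\{(y-y_0)\cdot \nu\leqslant \delta\}$, the cap estimate yields $|y-y_0|\leqslant C_A\sqrt\delta$, hence
\[
|y-x_j|\leqslant (C_A+1)\sqrt\delta<(C_A+1)\sqrt{b_s}\,N^{-1/d}\leqslant \tfrac{1}{2}c_s N^{-1/d}<c_s N^{-1/d}.
\]
Consequently no such $y$ belongs to $M^*$, and every $y^*\in M^*$ satisfies the strict inequality $(y^*-x_j)\cdot \nu>0$.

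Because $x_j$ lies in $\mathrm{int}(A)$ (and the limiting case $\delta=0$ is handled by the inward normal still pointing into $\mathrm{int}(A)$ at $y_0=x_j$), the point $\tilde x_j^h:=x_j+h\nu$ belongs to $A$ for every $h$ in some right neighborhood of $0$. Define
\[
P(h):=\inf_{y\in A}\Bigl[\sum_{i\neq j}|y-x_i|^{-s}+|y-\tilde x_j^h|^{-s}\Bigr],\qquad P(0)=\PP_s(A;N).
\]
Applying Proposition~\ref{babyhole} to the perturbed configuration shows that for all sufficiently small $h\geqslant 0$ the argmin set $M^*(h)$ stays uniformly bounded away from the finite set of singularities of the integrand; we may therefore restrict the analysis to a fixed compact subset of $A$ on which the integrand is jointly $C^1$. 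Danskin's (envelope) theorem then yields
\[
P'(0^+)=s\cdot \min_{y^*\in M^*}|y^*-x_j|^{-s-2}(y^*-x_j)\cdot \nu,
\]
which, by the previous paragraph, is a minimum over a compact set of a continuous strictly positive function, hence strictly positive. Thus $P(h)>P(0)=\PP_s(A;N)$ for all sufficiently small $h>0$; replacing $x_j$ by $\tilde x_j^h$ in $\omega_N^*$ yields a configuration with strictly larger polarization, contradicting optimality. Therefore $\delta\geqslant b_s N^{-2/d}$, and $b_s^{1/s}\to 1$ as $s\to\infty$, since $c_s^{1/s}\to 1$ by Proposition~\ref{babyhole} while $t_0$ and $C_A$ are independent of $s$.

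The main obstacle is justifying the Danskin envelope identity in the presence of the Riesz singularities; this is handled by Proposition~\ref{babyhole}, which uniformly separates the minimizing set from every $x_i$ (and from the moving perturbed point $\tilde x_j^h$), and by the cap estimate, which forces every minimizer onto the \emph{inward} side of $x_j$ once $\delta$ drops below the threshold.
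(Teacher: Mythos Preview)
Your argument is correct and rests on the same two ingredients as the paper's proof—the paraboloidal cap estimate for $\partial A$ coming from strong convexity, and Proposition~\ref{babyhole}—but it is organized differently. You fix the original optimal configuration, show that every minimizer $y^*$ of $\Phi$ lies strictly on the inward side of $x_j$ (via the cap estimate combined with Proposition~\ref{babyhole}), and then invoke Danskin's envelope theorem to conclude $P'(0^+)>0$, contradicting optimality. The paper instead dispenses with any derivative computation: it picks a single minimizer $\tilde y$ of the \emph{perturbed} configuration and observes directly from $P_s(A;\omega_N^*)\geqslant P_s(A;\widetilde\omega_N)$ that $|\tilde y-x|\leqslant |\tilde y-\tilde x|$, which forces $\tilde y$ into the thin boundary slab $\{x_\partial(d)-2cN^{-2/d}\leqslant y(d)\leqslant x_\partial(d)\}$; the cap estimate then bounds $|\tilde y-\tilde x|$ by a multiple of $\sqrt{c}\,N^{-1/d}$, contradicting Proposition~\ref{babyhole} applied to $\widetilde\omega_N$. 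The paper's route is more elementary (no envelope theorem, no need to discuss the full minimizing set or regularity near the singularities), while your route makes the variational mechanism more explicit; both yield the same $b_s$ with $b_s^{1/s}\to 1$.
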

\begin{rem*}
Let $x_\partial \in \partial A$ and make a rotation so that in the neighborhood $B(x_\partial, r)$ the manifold $\partial A$ is given by $\{(x', x(d))\colon x(d)=f(x')\}$ with $\nabla f(x_\partial')=0$ and the matrix $\textup{d}^2 f(x)$ is non-positive for $x\in \partial A\cap \overline{B(x_\partial, r)}$ (this can be done since $A$ is convex). Moreover, $r$ can be chosen sufficiently small so that
$$
\overline{B(x_\partial, r)} \cap A= \overline{B(x_\partial, r)}\cap \{x\colon x(d)\leqslant f(x')\}.
$$
We notice that the Gaussian curvature of $\partial A$ at $x_\partial$ is equal to the product of eigenvalues of the matrix $\textup{d}^2 f(x_\partial')$. Since in Theorem \ref{thsepar} we assume the Gaussian curvature is non-zero, the manifold $\partial A$ is compact and $C^2$-smooth and $\text{d}^2 f\leqslant 0$, we deduce that there exists a constant $C_A>0$ such that $\text{d}^2f(x')\leqslant -C_A I_{d-1}$ for every $x\in B(x_\partial, r)$, where $C_A$ does not depend on $x_\partial$.
\end{rem*}
\begin{proof}[Proof of Lemma \ref{notnearboundary}]
Take a point $x_\partial\in \partial A$ for which $|x-x_\partial|=\textup{dist}(x, \partial A)$. We can make a rotation and assume $x=x_\partial - cN^{-2/d}\cdot e_{d}$. We show that this is impossible if $c$ is sufficiently small.

Let $f$ be the function from the above remark. For a small positive number $\ep$ consider a point
$$
\tilde{x}:=x-\ep e_d \in A
$$
and a configuration $\widetilde{\omega}_N:=(\omega^*_N\setminus \{x\})\cup \{\tilde{x}\}$. Consider a point $\tilde{y}$ such that
$$
P(A; \widetilde{\omega}_N) = \sli_{\tilde{x}_j\in \widetilde{\omega}_N} \frac{1}{|\tilde{y}-\tilde{x}_j|^s}.
$$
Since $\omega^*_N$ is an extremal configuration, we have
$$
P_s(A; \omega^*_N)\geqslant P_s(A; \widetilde\omega_N),
$$
which after utilizing the definition of $\widetilde\omega_N$ implies
$$
|\tilde{y}-x|\leqslant |\tilde{y}-\tilde{x}|.
$$
Using that $\tilde{x}=x-\ep e_d$, we get
$$
\tilde{y}(d)-x(d)\geqslant -\ep/2, 
$$
or
$$
\tilde{y}(d)\geqslant x(d)-\ep/2 =x_\partial(d)-cN^{-2/d}-\ep/2.
$$
Since $\ep$ is an arbitrarily small number, we can assume $\ep/2 \leqslant cN^{-2/d}$. Then we obtain
$$
\tilde{y}(d)\geqslant x_\partial(d)-2cN^{-2/d}.
$$
On the other hand, since $A$ is a convex set, and the plane $\{z\in \R^d\colon z(d)=x_\partial(d)\}$ is tangent to $\partial A$, we have $\tilde{y}(d)\leqslant x_\partial(d)$. 

We now estimate the diameter of the set 
$$
S(N, c):=\{y\in A\colon x_\partial(d)-2cN^{-2/d}\leqslant y(d) \leqslant x_\partial(d)\}.
$$ 
Since $A$ is strongly convex, we obviously have $A\cap \{z\in \R^d\colon z(d)=x_\partial(d)\}=\{x_\partial\}$. Thus, $\textup{diam}(S(N,c))\to 0$ as $c\to 0$. If $c$ is chosen small enough, then $S(N, c) \subset B(x_\partial, \eta)\cap A$ for some $\eta>0$. 
Therefore, if $y$ belongs to $S(N, c)$, then for some $\xi\in B(x_\partial, \eta)$ we have
\begin{multline}
x_\partial(d)-2cN^{-2/d}\leqslant y(d)\leqslant f(y') = f(x_\partial') + \frac12 (\textup{d}^2 f(\xi')(y'-x_\partial'), (y'-x_\partial'))\\  \leqslant x_\partial(d)-\frac{C_A}{2}\cdot |y'-x_\partial'|^2,
\end{multline}
which implies
\begin{equation}
|y'-x_\partial '|^2\leqslant \frac{4c}{C_A}\cdot N^{-2/d};
\end{equation}
thus, for a suitable constant $C_B$,
$$
|y-x_\partial|^2\leqslant \frac{4c}{C_A}\cdot N^{-2/d}+4c^2 N^{-4/d}\leqslant C_B\cdot c\cdot N^{-2/d}.
$$
Therefore, since $\ep\leqslant 2cN^{-2/d}$,
$$
|\tilde{y}-\tilde{x}|\leqslant |\tilde{y}-x_\partial|+2cN^{-2/d} \leqslant \tau\cdot \sqrt{c}\cdot N^{-1/d} 
$$
for some constant $\tau$ that does not depend on $s$. For $c$ sufficiently small, this inequality contradicts Proposition \ref{babyhole} and so the lemma follows.
\end{proof}
In the next lemma we show that if $x\in A$ is close to $\partial A$ in one direction, then its distance in orthogonal directions can be estimated from below.
\begin{lemma}\label{vovsestoroni}
Let $\omega^*_N$ be an extremal configuration for $\PP_s(A; N)$ and $x\in \omega^*_N$. Assume $\tau$ is a sufficiently small positive number that does not depend on $N$. If $\textup{dist}(x, \partial A)=|x-x_\partial|$ with $x-x_\partial$ parallel to $e_d$, then the estimate $|x-x_\partial|< \tau N^{-1/d}$ implies $x\pm \tau N^{-1/d}e_j \in A$ for every $j=1, \ldots, d-1$.
\end{lemma}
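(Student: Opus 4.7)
The plan is to reduce the claim to a quadratic Taylor estimate using the local graph representation of $\partial A$ near $x_\partial$ (from the remark after Lemma \ref{notnearboundary}) combined with the lower bound on $\dist(x,\partial A)$ supplied by Lemma \ref{notnearboundary} itself.

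First, I fix a radius $r>0$ --- which by compactness of $\partial A$ and its $C^2$-smoothness can be chosen independent of the base boundary point --- such that in $B(x_\partial,r)$ the manifold $\partial A$ is the graph of a $C^2$ function $f$ with $\nabla f(x_\partial')=0$ and
\[
A\cap B(x_\partial,r)=\{z:z(d)\leqslant f(z')\}\cap B(x_\partial,r).
\]
Since $A$ is $C^2$-smooth and compact, the Hessian $\textup{d}^2 f$ is uniformly bounded in operator norm by some constant $M_A>0$. Because $x-x_\partial$ is parallel to $e_d$ and $x\in A$, after possibly reflecting in the $e_d$-direction I may write $x=x_\partial-\alpha e_d$ with $\alpha:=|x-x_\partial|$; in particular $x'=x_\partial'$ and $x(d)=x_\partial(d)-\alpha$.

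Second, for $\delta=\pm\tau N^{-1/d}$ and $j\in\{1,\ldots,d-1\}$, set $y_j:=x+\delta e_j$, so that $y_j'=x_\partial'+\delta e_j'$ and $y_j(d)=x_\partial(d)-\alpha$. Provided $\tau<r/2$, the estimate $|y_j-x_\partial|\leqslant \alpha+|\delta|<2\tau N^{-1/d}\leqslant 2\tau<r$ puts $y_j\in B(x_\partial,r)$, so $y_j\in A$ is equivalent to the scalar inequality
\[
x_\partial(d)-\alpha\leqslant f(x_\partial'+\delta e_j').
\]

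Third, Taylor's theorem at $x_\partial'$ together with $\nabla f(x_\partial')=0$ gives
\[
f(x_\partial'+\delta e_j')=x_\partial(d)+\tfrac{1}{2}\delta^2(\textup{d}^2 f(\xi'))_{jj}\geqslant x_\partial(d)-\tfrac{M_A}{2}\tau^2 N^{-2/d},
\]
while Lemma \ref{notnearboundary} yields $\alpha>b_s N^{-2/d}$ with $b_s$ independent of $N$. Thus, choosing $\tau$ small enough (independent of $N$), for instance $\tau\leqslant \min\{r/2,\sqrt{2b_s/M_A}\}$, we obtain
\[
x_\partial(d)-\alpha<x_\partial(d)-\tfrac{M_A}{2}\tau^2 N^{-2/d}\leqslant f(x_\partial'+\delta e_j'),
\]
so $y_j\in A$, as required. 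The only mildly delicate point is verifying that the neighborhood radius $r$ and the Hessian bound $M_A$ can be taken independent of $x_\partial\in \partial A$, which follows from the uniform $C^2$-regularity of $\partial A$ on the compact manifold; everything else is a direct quadratic Taylor estimate combined with the previous lemma.
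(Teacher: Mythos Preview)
Your proof is correct and follows essentially the same route as the paper's: reduce membership of $x\pm\tau N^{-1/d}e_j$ in $A$ to the scalar inequality $x(d)\leqslant f(x_\partial'\pm\tau N^{-1/d}e_j')$, Taylor-expand $f$ at $x_\partial'$ using $\nabla f(x_\partial')=0$ and a uniform Hessian bound, and close with the lower bound $\dist(x,\partial A)>b_s N^{-2/d}$ from Lemma~\ref{notnearboundary}. Your argument is in fact slightly more explicit than the paper's in verifying that the perturbed points remain in the neighborhood $B(x_\partial,r)$ where the graph representation is valid.
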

\begin{proof}
Again let $f$ be as in the above remark. Arguing as in the preceding lemma, we see that we need to show that $|x-x_\partial|<\tau N^{-1/d}$ implies $x(d)\leqslant f(x'\pm\tau N^{-1/d}e_j')$. Notice that since $x\in \omega^*_N$, we know that $|x-x_\partial|>cN^{-2/d}$ for some constant $c$. We apply the Taylor formula again:
\begin{equation}
f(x'\pm\tau N^{-1/d}e_j')=x_\partial(d)+\frac{\tau^2 N^{-2/d}}2 (\textup{d}^2f(\xi') e'_j, e'_j).
\end{equation}
Since the boundary $\partial A$ is compact and smooth, we can always assume $\textup{d}^2f(\xi')>-CI_{d-1}$ for some positive constant $C$. Thus,
$$
f(x'\pm\tau N^{-1/d}e_j') \geqslant x_\partial(d)-C\tau^2 N^{-2/d} \geqslant x(d) + (c-C\tau^2)N^{-2/d}\geqslant x(d)
$$
if $\tau$ is sufficiently small. 
\end{proof}

We are ready to prove Theorem \ref{thsepar}.
\begin{proof}[Proof of Theorem \ref{thsepar} for a strongly convex set $A$]
We argue by contradiction. Suppose there exists small number $\eta>0$ and an extremal configuration $\omega^*_N=\{x_1, \ldots, x_N\}$ such that $\{x_1, \ldots, x_{2d}\}\subset B(z, \eta N^{-1/d})$. Consider
$$
\hat{x}:=\frac{x_1+\cdots+x_{2d}}{2d}\in A.
$$
Since $\hat{x}\in B(z, \eta N^{-1/d})$, we have $|x_j-\hat{x}|\leqslant 2\eta N^{-1/d}$ for every $j=1,\ldots, 2d$.

Fix a small number $\tau>\eta$. We will choose it later to be a multiple of $\eta$. Set $\ep:=\tau N^{-1/d}$. We consider two cases.
\paragraph{\bf{Case 1:}} $\textup{dist}(\hat{x}, \partial A)\geqslant \ep$.
Define $2d$ points as follows:
\begin{alignat*}{2}
\tilde{x}_1&:=\hat{x}-\ep e_1,& \;\;\;\; \tilde{x}_2&:=\hat{x}+\ep e_1,\\
\tilde{x}_3&:=\hat{x}-\ep e_2,& \;\;\;\; \tilde{x}_4&:=\hat{x}+\ep e_2,\\
 &\qquad \qquad\qquad \qquad\ldots\\
\tilde{x}_{2d-1}&:=\hat{x}-\ep e_d,& \;\;\; \tilde{x}_{2d}&:=\hat{x}+\ep e_d.
\end{alignat*}
Since $\textup{dist}(\hat{x}, \partial A)\geqslant\ep$, these points belong to $A$. Define $\widetilde{\omega}_N:=\{\tilde{x}_1, \ldots, \tilde{x}_{2d}, \tilde{x}_{2d+1}, \ldots, \tilde{x}_{N}\}$, where $\tilde{x}_j:=x_j$ for $j\geqslant 2d+1$.
Let $\tilde{y}$ be such that
\begin{equation}\label{ytildedefin}
P_s(A; \widetilde\omega_N)=\sli_{j=1}^N\frac{1}{|\tilde{y}-\tilde{x}_j|^s}.
\end{equation}
We have
$$
\sli_{j=1}^N\frac{1}{|\tilde{y}-\tilde{x}_j|^s} \leqslant \PP_s(A; N)=P_s(A; \omega^*_N) \leqslant \sli_{j=1}^N\frac{1}{|\tilde{y}-x_j|^s},
$$
and thus
\begin{equation}\label{mainineqconvex}
\sli_{j=1}^{2d}\frac{1}{|\tilde{y}-\tilde{x}_j|^s} \leqslant \sli_{j=1}^{2d}\frac{1}{|\tilde{y}-x_j|^s},
\end{equation}
Set $f(x):=|\tilde{y}-x|^{-s}$.  Then, from the Taylor formula about $\hat{x}$, we have for $x\in\{x_1,\ldots, x_{2d}\}$
$$
f(x)=f(\hat{x}) + s\frac{(\tilde y-\hat{x})\cdot (x-\hat{x})}{|\tilde y-\hat{x}|^{s+2}} + \frac{1}2 \cdot\left(-s\cdot \frac{|x-\hat{x}|^2}{|\tilde y-\xi|^{s+2}}+s(s+2)\cdot \frac{((\tilde y-\xi)\cdot (x-\hat{x}))^2}{|\tilde y-\xi|^{s+4}}\right),
$$
for some $\xi=\xi(x)\in B(\hat{x}, |x-\hat{x}|)$. From Proposition \ref{babyhole} we know that $|\tilde{y}-\tilde{x}_1|\geqslant c_s N^{-1/d}$. Without loss of generality we assume $\tau<c_s/2$, and so
\begin{equation}\label{trumpumpum}
|\tilde{y}-\hat{x}|=|\tilde{y}-\tilde{x}_1+\ep e_1|\geqslant (c_s-\tau)N^{-1/d}\geqslant (c_s/2)\cdot N^{-1/d},
\end{equation}
and
$$
|\tilde y-\xi|\geqslant |\tilde y-\hat{x}|-|\hat{x}-\xi|\geqslant |\tilde y - \hat{x}|-|x-\hat{x}|\geqslant |\tilde y - \hat{x}|-2\eta N^{-1/d} \geqslant (1-4\eta/c_s)|\tilde y-\hat{x}|.
$$
Therefore, for every $j=1,\ldots, 2d$ we have
$$
f(x_j)\leqslant f(\hat{x})+s\frac{(\tilde y-\hat{x})\cdot(x_j-\hat{x})}{|\tilde y-\hat{x}|^{s+2}} + \frac{2s(s+3)\eta^2 N^{-2/d}(1-4\eta/c_s)^{-s-2}}{|\tilde y-\hat{x}|^{s+2}}.
$$
Summing these inequalities over $j$ and recalling that $x_1+\cdots+x_{2d}=2d\hat{x}$ yields
\begin{equation}
\sli_{j=1}^{2d}\frac{1}{|\tilde y-x_j|^s} \leqslant 2d\cdot f(\hat{x}) +  \frac{4sd (s+3)\cdot \eta^2 N^{-2/d}\cdot(1-4\eta/c_s)^{-s-2}}{|\tilde y-\hat{x}|^{s+2}}.
\end{equation}
Plugging this estimate into \eqref{mainineqconvex}, we obtain

\begin{equation}\label{mainineqconvex2}
f(\hat{x}) \geqslant \frac{1}{2d}\sli_{j=1}^{2d}f(\tilde{x}_j) - \frac{\eta^2 N^{-2/d}\cdot 2s (s+3)(1-4\eta/c_s)^{-s-2}} {|\tilde y-\hat{x}|^{s+2}}.
 \end{equation}

We proceed with the Taylor formula for $f(\tilde x_j)$. We first write it for $j=1$. Recall that $\tilde{x}_1 = \hat{x}-\ep e_1$.  Since $|e_1|=1$, we get for some $\xi \in B(\hat{x}, |\tilde x_1-\hat{x}|) = B(\hat{x}, \ep)$,
\begin{multline}\label{supertaylorconvex}
f(\tilde x_1)=f(\hat{x}-\ep e_1) \\= f(\hat{x})-s\ep \frac{(\tilde y-\hat{x})\cdot e_1}{|\tilde y-\hat{x}|^{s+2}} +
\frac{\ep^2}2 \cdot \left(-s\cdot \frac{e_1\cdot e_1}{|\tilde y-\hat{x}|^{s+2}}+s(s+2)\frac{((\tilde y-\hat{x})\cdot e_1)^2}{|\tilde y-\hat{x}|^{s+4}}\right) \\
\hspace{2cm}+\frac{\ep^3}6 \cdot \left(-3s(s+2)\cdot \frac{((\tilde y-\xi)\cdot e_1)\cdot (e_1\cdot e_1)}{|\tilde y-\xi|^{s+4}} + s(s+2)(s+4)\cdot \frac{((\tilde y-\xi)\cdot e_1)^3}{|\tilde y-\xi|^{s+6}}\right)  \\
=f(\hat{x})-s\ep \frac{(\tilde y-\hat{x})\cdot e_1}{|\tilde y-\hat{x}|^{s+2}} +
\frac{\ep^2}2 \cdot \left(-s\cdot \frac{1}{|\tilde y-\hat{x}|^{s+2}}+s(s+2)\frac{((\tilde y-\hat{x})\cdot e_1)^2}{|\tilde y-\hat{x}|^{s+4}}\right)  \\
+\frac{\ep^3}6 \cdot \left(-3s(s+2)\cdot \frac{((\tilde y-\xi)\cdot e_1)}{|\tilde y-\xi|^{s+4}} + s(s+2)(s+4)\cdot \frac{((\tilde y-\xi)\cdot e_1)^3}{|\tilde y-\xi|^{s+6}}\right)
.
\end{multline}

Next we estimate the remainder term involving $\xi$. As before,
$$
|\tilde y-\xi|\geqslant |\tilde y - \hat{x}| - |\xi-\hat{x}|\geqslant |\tilde y - \hat{x}|- \tau N^{-1/d}\geqslant (1-2\tau/c_s)|\tilde y-\hat{x}|.
$$
This implies
\begin{multline}\label{errorconvex}
\left|-3s(s+2)\cdot \frac{(\tilde y-\xi)\cdot e_1}{|\tilde y-\xi|^{s+4}} + s(s+2)(s+4)\cdot \frac{((\tilde y-\xi)\cdot e_1)^3}{|\tilde y-\xi|^{s+6}}\right| \\
 \leqslant s(s+2)(s+7) \cdot (1-2\tau/c_s)^{-s-3}\cdot \frac{1}{|\tilde y-\hat{x}|^{s+3}}.
\end{multline}

Using the formula \eqref{supertaylorconvex} with $\tilde{x}_1$ replaced by $\tilde{x}_j$ we obtain an equation for $f(\tilde{x}_j)$ which, when substituted along with \eqref{errorconvex} into \eqref{mainineqconvex2}, yields
\begin{multline}\label{tirlim}
\frac{\ep^2}2 \left(-s\frac{1}{|\tilde{y}-\hat{x}|^{s+2}}+\frac{s(s+2)}{d}\cdot \frac{1}{|\tilde y-\hat{x}|^{s+2}}\right) \\
-\frac{\ep^3}6 s(s+2)(s+7) \cdot (1-2\tau/c_s)^{-s-3}\cdot \frac{1}{|\tilde y-\hat{x}|^{s+3}}\\ 
-\eta^2 N^{-2/d}\cdot 4s (s+3)(1-4\eta/c_s)^{-s-2}\cdot \frac{1}{|\tilde y-\hat{x}|^{s+2}}\leqslant 0.
\end{multline}
We remark that the first term in \eqref{tirlim} is, up to a constant factor, the Laplacian, in $x$, of the function $f(x)$. Although $f(x)$ is neither convex nor concave (for some choices of $\tilde{y}$, about which we have no information), the Laplacian $\Delta f(x)$ is always positive, which plays an essential role in our argument. Indeed, the need for at least $2d$ points $\{x_j\}_{j=1}^{2d}$ enables the definition of $\{\tilde{x}_j\}_{j=1}^{2d}$ so that the leading terms in the Taylor formula vanish leaving the positive second term. This will enable us to arrive at a contradiction to \eqref{tirlim} as we now explain. 

Recalling from \eqref{trumpumpum} that $|\tilde{y}-\hat{x}|\geqslant (c_s/2)\cdot N^{-1/d}$, we multiply \eqref{tirlim} by $2|\tilde{y}-\hat{x}|^{s+2}$ and divide by $sN^{-2/d}$ to obtain
\begin{multline}\label{contr321}
\frac{s+2-d}d\tau^2-2/3\tau^3 N^{-1/d}\cdot (s+2)(s+7)(1-2\tau/c_s)^{-s-3}c_s^{-1}\\
- 8\eta^2 (s+3)(1-4\eta/c_s)^{-s-2} \leqslant 0.
\end{multline}
Since $s>d$, this is impossible if $\tau$ is a suitable large multiple (depending on $s$) of $\eta$ and $\eta$ is small, and so the first assertion of Theorem \ref{thsepar} holds in this case. Observe that \eqref{contr321} fails if $\eta=\eta_s=c_s/s$ and $s$ is sufficiently large. Hence from Proposition \ref{babyhole} the family $\Omega_s$ is weakly well-separated with $M=2d-1$ and parameter $\eta_s$ with $\lim_{s\to \infty} \eta_s^{1/s}=1$.

\bigskip

\paragraph{\bf{Case 2:}} $\textup{dist}(\hat{x}, \partial A)<\ep$. Without loss of generality, we assume $\bar{x}+\ep e_d\not\in A$. We again take the point $x_\partial \in \partial A$ that achieves this distance and argue as in Lemma \ref{vovsestoroni}. We see that for any $j\leqslant 2d-2$ the points $\tilde{x}_j$, defined as above, lie in the set $A$. We redefine
$$
\tilde{x}_{2d-1}:=\tilde{x}_{2d}:=\hat{x} - \ep e_d,
$$
and let $\tilde{y}$ be as in \eqref{ytildedefin}.
The Taylor expansions of the terms on the left in \eqref{mainineqconvex} yield the following analog of \eqref{tirlim}:
\begin{multline}\label{blahblah123}
-s\frac{\ep}d \cdot \frac{\tilde y(d)-\hat{x}(d)}{|\tilde y - \hat{x}|^{s+2}}+\frac{\ep^2}2 \left(-s\frac{1}{|\tilde{y}-\hat{x}|^{s+2}}+\frac{s(s+2)}{d}\cdot \frac{1}{|\tilde y-\hat{x}|^{s+2}}\right) \\
-\frac{\ep^3}6 s(s+2)(s+7) \cdot (1-2\tau/c_s)^{-s-3}\cdot \frac{1}{|\tilde y-\hat{x}|^{s+3}}  \\ 
-\eta^2 N^{-2/d}\cdot 4s (s+3)(1-4\eta/c_s)^{-s-2}\cdot \frac{1}{|\tilde y-\hat{x}|^{s+2}}\leqslant 0,
\end{multline}
and, consequently, we have the following analog of \eqref{contr321},
\begin{multline}\label{contragain123}
-2\tau \frac{N^{1/d}}d (\tilde{y}(d)-\hat{x}(d))+ \frac{s+2-d}{d}\tau^2- \\ -2/3\tau^3 N^{-1/d}\cdot (s+2)(s+7)(1-2\tau/c_s)^{-s-3}c_s^{-1}- \\ 8\eta^2 (s+3)(1-4\eta/c_s)^{-s-2} \leqslant 0.
\end{multline}

Since $\tilde{y}(d)\leqslant x_\partial(d)$ and $\hat{x}(d)>x_\partial(d)-\tau N^{-1/d}$, we obtain
$$
-2\tau \frac{N^{1/d}}d (\tilde{y}(d)-\hat{x}(d))+ \frac{s+2-d}{d}\tau^2 \geqslant \frac{s-d}{d}\tau^2;
$$
therefore, \eqref{contragain123} is impossible for suitably small choices of $\eta$ and $\tau$, which as in the Case 1 yields the assertion of Theorem \ref{thsepar}.
\end{proof}

\section{Weak separation for the cube}\label{sectioncube}
In this section we show how to modify the proof of Theorem \ref{thsepar} to a case when the boundary $\partial A$ is not smooth. Namely, we prove the weak well-separation result for the unit cube, Theorem \ref{thseparcube}.

We begin with the following lemma.
\begin{lemma}\label{noangle}
If $s>d$, $\omega^*_N$ is optimal for $\PP_s([0,1]^d; N)$, and $x\in \omega^*_N$, then there exists a constant $b_s$ that does not depend on $N$ such that
$$
\max_{j=1, \ldots, d}x(j)\geqslant b_s N^{-1/d}.
$$
Moreover, one can choose $b_s$ so that $\lim_{s\to \infty} b_s^{1/s}=1$. 
\end{lemma}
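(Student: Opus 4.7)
The plan is to argue by contradiction along the lines of Lemma \ref{notnearboundary}, but since the cube has corners rather than a smooth boundary, we exploit the fact that pushing a point close to the corner $0$ along the inward diagonal keeps it inside the cube.

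Suppose some $x\in\omega_N^*$ satisfies $\max_j x(j)<b_s N^{-1/d}$ with $b_s$ to be chosen small. Let $v:=(1,\dots,1)/\sqrt d$, fix $\tau>0$ to be chosen, set $\tilde x:=x+\tau N^{-1/d}v$ and $\widetilde{\omega}_N:=(\omega_N^*\setminus\{x\})\cup\{\tilde x\}$. Since every coordinate of $x$ is nonnegative and less than $b_s N^{-1/d}$, for $b_s$ and $\tau$ small enough we have $\tilde x\in[0,1]^d$. Take $\tilde y\in[0,1]^d$ with $P_s([0,1]^d;\widetilde{\omega}_N)=\sum_j|\tilde y-\tilde x_j|^{-s}$. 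Since $\tilde y$ is a competitor for the infimum defining $P_s([0,1]^d;\omega_N^*)$, the optimality $P_s(A;\omega_N^*)\geq P_s(A;\widetilde{\omega}_N)$ yields $|\tilde y-\tilde x|\geq|\tilde y-x|$. Using $\tilde x-x=\tau N^{-1/d}v$ this is equivalent to
\[
(\tilde y-x)\cdot v\leq \tfrac{\tau}{2}\,N^{-1/d}.
\]

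From $x\cdot v\leq \sqrt d\,b_s N^{-1/d}$ we then obtain $\tilde y\cdot v\leq(\sqrt d\,b_s+\tfrac{\tau}{2})N^{-1/d}$, so $\sum_j\tilde y(j)=\sqrt d\,\tilde y\cdot v\leq(d\,b_s+\tfrac{\sqrt d}{2}\tau)N^{-1/d}$. Because $\tilde y(j)\geq 0$ for each $j$, we have $|\tilde y|\leq\sum_j\tilde y(j)\leq(d\,b_s+\tfrac{\sqrt d}{2}\tau)N^{-1/d}$. Combined with the easy bound $|\tilde x|\leq(\sqrt d\,b_s+\tau)N^{-1/d}$, the triangle inequality produces $|\tilde y-\tilde x|\leq K(b_s,\tau,d)\,N^{-1/d}$ with $K\to 0$ as $b_s,\tau\to 0$. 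But Proposition \ref{babyhole} applied to $\tilde x\in\widetilde{\omega}_N$ and $\tilde y$ forces $|\tilde y-\tilde x|\geq c_s N^{-1/d}$, so we must have $K(b_s,\tau,d)\geq c_s$. Choosing $b_s$ and $\tau$ as explicit universal multiples of $c_s$ (with multipliers depending only on $d$) makes $K<c_s$, a contradiction. This fixes $b_s$ as a constant multiple of $c_s$, and since $c_s^{1/s}\to 1$ we obtain $\lim_{s\to\infty}b_s^{1/s}=1$.

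The main obstacle is that the boundary of the cube fails to be smooth at the corner $0$, so the two-sided perturbations $\pm\varepsilon e_j$ used in Lemma \ref{notnearboundary} are not all admissible. The new ingredient is the single inward diagonal perturbation: the vertex structure at $0$, which guarantees nonnegativity of every coordinate of the competitor $\tilde y$, lets us upgrade the scalar inequality on $\tilde y\cdot v$ to a full bound on $|\tilde y|$, strong enough to contradict the lower bound from Proposition \ref{babyhole}.
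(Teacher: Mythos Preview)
Your proof is correct and follows essentially the same approach as the paper's: perturb the offending point along the inward diagonal $v$, use optimality to get $|\tilde y-x|\leq|\tilde y-\tilde x|$ and hence a bound on $(\tilde y-x)\cdot v$, then exploit nonnegativity of the coordinates of $\tilde y$ to bound $|\tilde y-\tilde x|$ and contradict Proposition~\ref{babyhole}. The only differences are cosmetic (you normalize $v$, the paper does not; you treat $\tau$ as a separate parameter rather than taking $\ep\le cN^{-1/d}$).
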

\begin{proof}
We proceed as in Lemma \ref{notnearboundary}. Denote $v:=(1, \ldots, 1)$ and $\tilde{x}:=x+\ep v$. If for some small number $c$ we have $\max_{j=1,\ldots, d}x(j) \leqslant cN^{-1/d}$, then $\tilde{x}\in [0,1]^d$. Further, set $\widetilde{\omega}_N:=(\omega^*_N\setminus \{x\})\cup \{\tilde{x}\}$. If $\tilde{y}$ minimizes $P_s([0,1]^d, \widetilde\omega_N)$, then we have
$$
|\tilde{y}-x|\leqslant |\tilde{y}-\tilde{x}|,
$$
which implies
$$
(\tilde{y}-x)\cdot v \leqslant d\ep.
$$
Utilizing the definition of $v$ and taking $\ep\leqslant cN^{-1/d}$, we obtain
$$
\tilde y(j)\leqslant \sli_{j=1}^d \tilde y(j) \leqslant \sli_{j=1}^d x(j) + d\ep \leqslant d(cN^{-1/d}+\ep) \leqslant 2dcN^{-1/d}.
$$
Therefore,
$$
|\tilde{y}-\tilde{x}|\leqslant \sqrt{d}(\max_{j=1,\ldots, d}\tilde{y}(j)+\max_{j=1,\ldots, d}\tilde{x}(j)) \leqslant 4d\sqrt{d}\cdot cN^{-1/d}.
$$
If $c$ is small enough, this contradicts Proposition \ref{babyhole}.
\end{proof}

We are ready to prove Theorem \ref{thseparcube}.
\begin{proof}[Weak separation for the cube]
We again argue by contradiction. Suppose for $\eta>0$ and an optimal Riesz $s$-polarization configuration $\omega^*_N=\{x_1, \ldots, x_N\}$ we have $\{x_1, \ldots, x_{2d}\}\subset B(z, \eta N^{-1/d})$. Define
$$
\hat{x}:=\frac{x_1+\cdots+x_{2d}}{2d}\in [0,1]^d.
$$
Since $\hat{x}\in B(z, \eta N^{-1/d})$, we have $|x_j-\hat{x}|\leqslant 2\eta N^{-1/d}$ for every $j=1,\ldots, 2d$. 

Consider a small number $\tau>\eta$. We will choose it later to be a multiple of $\eta$. Set $\ep:=\tau N^{-1/d}$. We consider two cases.
\paragraph{\textbf{Case 1:}} $\textup{dist}(\hat{x}, \partial [0,1]^d)\geqslant \ep$.
In this case we proceed exactly as in the first case of Section \ref{sectionconvex} and get the same contradiction.

\bigskip

\paragraph{\textbf{Case 2:}}$\textup{dist}(\hat{x}, \partial [0,1]^d)<\ep$. We notice that since $|\hat{x}-x_j|<2\eta N^{-1/d}$, Lemma \ref{noangle} implies that $\hat{x}$ cannot be close to any vertex of the cube. Therefore, there exists at least one number $j$ such that $\hat{x}\pm \ep e_j \in [0,1]^d$. Without loss of generality, $j=1$. We now assume that for some $j_0=1,\ldots, N$ we have $\hat{x}\pm \ep e_j \in [0,1]^d$ for $j\leqslant j_0$, and $\hat{x}-\ep e_j \not\in [0,1]^d$ for $j>j_0$. Cases when $\hat{x}+\ep e_j\not\in [0,1]^d$ are treated similarly.
We define
\begin{alignat*}{2}
\tilde{x}_1&:=\hat{x}-\ep e_1,  & \tilde{x}_2&:=\hat{x}+\ep e_1,\\
& \qquad\qquad\qquad\qquad\ldots & \\ 
\tilde{x}_{2j_0-1}&:=\hat{x}-\ep e_{j_0}, & \tilde{x}_{2j_0}&:=\hat{x}+\ep e_{j_0},\\
\end{alignat*}
$\tilde{x}_{k}:=\hat{x}+\ep e_{\lfloor (k+1)/2 \rfloor}$ for $k=2j_0+1,\ldots, 2d$, and $\widetilde{\omega}_N:=\{\tilde{x}_1, \ldots, \tilde{x}_N\}$, where $\tilde{x}_j:=x_j$ for $j>2d$. Let $\tilde{y}$ such that 
$$
P_s(A; \widetilde\omega_N)=\sli_{j=1}^N\frac{1}{|\tilde{y}-\tilde{x}_j|^s}.
$$
Similarly to \eqref{blahblah123}, we get
\begin{multline}\label{blahblah321}
s\frac{\ep}d \cdot \sli_{j>j_0} \frac{\tilde y(j)-\hat{x}(j)}{|\tilde y - \hat{x}|^{s+2}}+\frac{\ep^2}2 \left(-s\frac{1}{|\tilde{y}-\hat{x}|^{s+2}}+\frac{s(s+2)}{d}\cdot \frac{1}{|\tilde y-\hat{x}|^{s+2}}\right) \\
-\frac{\ep^3}6 s(s+2)(s+7) \cdot (1-2\tau/c_s)^{-s-3}\cdot \frac{1}{|\tilde y-\hat{x}|^{s+3}} \\ 
-\eta^2 N^{-2/d}\cdot 4s (s+3)(1-4\eta/c_s)^{-s-2}\cdot \frac{1}{|\tilde y-\hat{x}|^{s+2}}\leqslant 0,
\end{multline}

Notice that if $\tilde{y}(j)\geqslant \hat{x}(j)$, then 
$$
s\frac{\ep}d \cdot  \frac{\tilde y(j)-\hat{x}(j)}{|\tilde y - \hat{x}|^{s+2}} \geqslant 0.
$$
If $\tilde{y}(j)<\hat{x}(j)$, then we estimate $\tilde{y}(j)-\hat{x}(j)\geqslant -\hat{x}(j) \geqslant -\ep$.  Since $j_0>1$, we have at most $d-1$ numbers $j$ with $j>j_0$. Therefore, \eqref{blahblah321} implies
\begin{multline}
\ep^2 \cdot \frac{s+2-d-2(d-1)}{2d} \cdot \frac{1}{|\tilde{y}-\hat{x}|^{s+2}}  \\
- \frac{\ep^3}6 s(s+2)(s+7) \cdot (1-2\tau/c_s)^{-s-3}\cdot \frac{1}{|\tilde y-\hat{x}|^{s+3}}  \\ 
-\eta^2 N^{-2/d}\cdot 4s (s+3)(1-4\eta/c_s)^{-s-2}\cdot \frac{1}{|\tilde y-\hat{x}|^{s+2}}\leqslant 0,
\end{multline}
which for suitably chosen $\eta$ and $\tau$ gives a contradiction if $s>3d-4$. As with Theorem \ref{thsepar}, it follows that $\eta=\eta_s$ can be taken so that $\lim_{s\to\infty}\eta_s^{1/s}=1$.

\end{proof}

\section{Weak separation on the sphere and spherical caps}\label{sectionsphere}
In this section we prove Theorem \ref{strconv} when $A=\S^d$ or when $A\subset \S^d$ is a spherical cap. We proceed as in Section \ref{sectionconvex}. However, computations will be different since the sphere $\S^d$ is not ``flat''. We start with the following result.
\begin{theorem}[Weak separation on the sphere] \label{weakseparsphere}
Consider the unit sphere $\S^d\subset \R^{d+1}$, and $s>d$ of $s\in [d-1,d)$. Then there exists a number $\eta>0$ such that for any $N$, any optimal configuration $\omega^*_N$ and any point $z\in \R^{d+1}$, we have
$$
\#(\omega_N \cap B(z, \eta N^{-1/d}))\leqslant 2d-1.
$$
Moreover, for large values of $s$ we can choose $\eta=\eta_s$ with
$$
\lim_{s\to \infty} \eta_s^{1/s} =1.
$$
\end{theorem}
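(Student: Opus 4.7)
The plan is to mimic the strongly convex argument of Section~\ref{sectionconvex}, replacing the flat Laplacian by the Laplace--Beltrami operator on $\S^d$ and absorbing the extra curvature correction. Suppose, for contradiction, that some optimal $\omega_N^*=\{x_1,\ldots,x_N\}$ has $\#\bigl(\omega_N^*\cap B(z,\eta N^{-1/d})\bigr)\ge 2d$ for a small $\eta$ to be chosen, and relabel so that $x_1,\ldots,x_{2d}$ lie in this ball. Form the ambient centroid $\hat x:=(2d)^{-1}\sum_{j=1}^{2d} x_j\in\R^{d+1}$ and set $P:=\hat x/|\hat x|\in\S^d$. Since $|x_j|=1$ and $|x_j-\hat x|\le 2\eta N^{-1/d}$, a direct computation gives $|\hat x-P|=O(\eta^2 N^{-2/d})$, so $P$ lies within the small scale $O(\eta N^{-1/d})$ of every clustered point.

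Pick an orthonormal basis $v_1,\ldots,v_d$ of $T_P\S^d$, put $\epsilon:=\tau N^{-1/d}$ with $\tau>\eta$ to be chosen, and replace the $2d$ clustered points by the geodesically spread ones
$$\tilde x_i:=\cos(\epsilon)\,P+\sin(\epsilon)\,v_i,\qquad \tilde x_{d+i}:=\cos(\epsilon)\,P-\sin(\epsilon)\,v_i\quad(1\le i\le d),$$
all of which lie on $\S^d$. Let $\widetilde\omega_N$ denote the modified configuration and $\tilde y\in\S^d$ minimize $\sum_j|y-\tilde x_j|^{-s}$. Optimality of $\omega_N^*$ yields the spherical analogue of \eqref{mainineqconvex}:
$$\sum_{j=1}^{2d}\frac{1}{|\tilde y-\tilde x_j|^s}\;\le\;\sum_{j=1}^{2d}\frac{1}{|\tilde y-x_j|^s}.$$

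Write $f(x):=|\tilde y-x|^{-s}$ and $r:=|\tilde y-P|$. Taylor-expand both sides around $P$ in $\R^{d+1}$. On the left, the pair symmetry $v_i\leftrightarrow -v_i$ annihilates the odd tangential derivatives and the second-order term collapses to $\epsilon^2\Delta_{\S^d} f(P)$ via the extrinsic identity
$$\Delta_{\S^d} f(P)\;=\;\Delta_{\R^{d+1}} f(P)-D^2f(P)[P,P]-d\,P\cdot\nabla f(P);$$
on the right, the linear term is controlled by $\sum_j(x_j-P)=2d(\hat x-P)=O(\eta^2 N^{-2/d})$. Using Proposition~\ref{babyhole} to obtain $r\ge\tfrac12 c_s N^{-1/d}$ (which is precisely the range $s>d$ or $s\in[d-1,d)$ for which Proposition~\ref{babyhole} is available on $\S^d$) to dominate the cubic remainders, one arrives at the spherical analogue of \eqref{tirlim}:
$$\epsilon^2\,\Delta_{\S^d} f(P)\;\le\;C_1\,\eta^2 N^{-2/d}\,s^2 r^{-s-2}+C_2\,\epsilon^3 s^3 r^{-s-3}.$$
A direct computation, using $P\cdot\tilde y=1-r^2/2$ because both points lie on $\S^d$, evaluates
$$\Delta_{\S^d} f(P)\;=\;\frac{s}{4\,r^{s+2}}\bigl[(3s-4d+6)+2d\,r^2\bigr],$$
and division by $s\,r^{-s-2}N^{-2/d}$ converts the previous inequality into
$$\tau^2\bigl[(3s-4d+6)+2d\,r^2\bigr]\;\le\;C_1'\,\eta^2 s+C_2'\,\tau^3 s^2 c_s^{-1},$$
which is violated once $\eta$ is small and $\tau$ is a sufficiently large multiple of $\eta$. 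The quantitative choice $\eta_s\sim c_s/s$ used in Section~\ref{sectionconvex} again forces $\eta_s^{1/s}\to 1$ as $s\to\infty$.

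The hardest part will be controlling the bracket $(3s-4d+6)+2d\,r^2$ throughout the stated parameter range: for $d\le 3$ it is automatically nonnegative, but for larger $d$ and $s$ near $d-1$ the first summand is negative and the argument must rely on the $2d\,r^2$ term. For this I will extract a quantitative lower bound on $r$ by noting that the clustered points alone contribute at least $2d(r+2\eta N^{-1/d})^{-s}$ to $\sum_j|\tilde y-x_j|^{-s}$, so the upper estimate $\PP_s(\S^d;N)\le C_s N^{s/d}$ of Theorem~\ref{botest} forces $r^s\gtrsim(2d/C_s)\,N^{-s/d}$; combined with the freedom to choose $\eta$ small this keeps the bracket bounded below. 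The spherical-cap case is identical in the interior; near $\partial A$ (a $(d-1)$-sphere with non-degenerate intrinsic curvature) one runs the spherical analogs of Lemmas~\ref{notnearboundary} and \ref{vovsestoroni} to preclude the $\tilde x_j$ from leaving the cap, reducing to the interior situation.
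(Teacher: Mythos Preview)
Your overall strategy is the same as the paper's: assume $2d$ optimal points cluster, replace them by $2d$ symmetrically spread points on $\S^d$, and extract from optimality a Taylor inequality whose leading second-order term is a ``spherical Laplacian'' of $f(x)=|\tilde y-x|^{-s}$ at the cluster center. The paper does this in explicit coordinates (setting $\hat x'=(2d)^{-1}\sum x_j'$, $\hat x(d+1)=\sqrt{1-|\hat x'|^2}$, rotating so that $\hat x=e_{d+1}$, and spreading via $\tilde x_j'=\ep v_j$), while you use geodesic points $\cos(\ep)P\pm\sin(\ep)v_i$ and the extrinsic Laplace--Beltrami identity. These are cosmetic differences; the crux in both is the strict positivity of the second-order coefficient, which the paper packages as $C(s,d)=\min\bigl(\tfrac12,\tfrac{s+2}{4d},\tfrac{s+2-d}{2d}\bigr)>0$.

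There is, however, an arithmetic slip in your evaluation of $\Delta_{\S^d}f(P)$. With $r=|\tilde y-P|$ and $P\cdot\tilde y=1-r^2/2$ one finds
\[
\Delta_{\S^d} f(P)\;=\;\frac{s}{4\,r^{s+2}}\Bigl[\,4(s+2-d)+(2d-s-2)\,r^2\,\Bigr],
\]
not $\frac{s}{4r^{s+2}}\bigl[(3s-4d+6)+2dr^2\bigr]$; a direct check on $\S^1$, where $\Delta_{\S^1}$ is the arc-length second derivative, confirms the former. The correct bracket is affine in $r^2$ with endpoint values $4(s+2-d)>0$ (at $r^2=0$) and $4d>0$ (at $r^2=4$), hence strictly positive on the whole range $r\in(0,2]$ as soon as $s>d-2$. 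Thus the ``hardest part'' you flag for large $d$ and $s$ near $d-1$ is purely an artifact of the computational error; with the correct formula the positivity is immediate for both $s>d$ and $s\in[d-1,d)$, and this is exactly what the paper's $C(s,d)$ encodes. Note too that your proposed rescue via a lower bound on $r$ could not have worked with your formula anyway: Proposition~\ref{babyhole} only yields $r\gtrsim c_sN^{-1/d}$, so $2dr^2\to 0$ as $N\to\infty$ and cannot compensate a fixed negative constant.
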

\begin{proof}
Assume the theorem is false: there exists a ball $B(z, \eta N^{-1/d})$ and an optimal configuration $\omega^*_N=\{x_1, \ldots, x_N\}$ such that $\{x_1, \ldots, x_{2d}\}\subset  B(z, \eta N^{-1/d})$. 

Without loss of generality, we can assume $z'=0$ and $z(d+1)>0$. Denote
$$
\hat{x}':=\frac{x_1'+\cdots+x_{2d}'}{2d},
$$
and 
$$
\hat{x}(d+1):=\sqrt{1-|\hat{x}'|^2}.
$$
Since $|x_j-z|<\eta N^{-1/d}$ for $j=1, \ldots, 2d$, then $|x_j'|=|x_j'-z'|<\eta N^{-1/d}$; thus
$$
|\hat{x}'|<\eta N^{-1/d},
$$
and 
$$
1-\eta^2 N^{-2/d}\leqslant \hat{x}(d+1)\leqslant 1, \;\;\;\;\; 1-\eta^2 N^{-2/d}\leqslant x_j(d+1)\leqslant 1.
$$
Therefore, 
$$
-\eta^2 N^{-2/d}\leqslant x_j(d+1)-\hat{x}(d+1)\leqslant \eta^2 N^{-2/d},
$$
which implies for $\eta$ sufficiently small
$$
|x_j-\hat{x}|^2 = |x_j'-\hat{x}'|^2 + (x_j(d+1)-\hat{x}(d+1))^2 \leqslant 4\eta^2 N^{-2/d}+ \eta^4 N^{-4/d}\leqslant 5\eta^2 N^{-2/d}.
$$
We conclude that
$$
\{x_1, \ldots, x_{2d}\}\subset B(\hat{x}, \sqrt{5}\eta N^{-1/d}).
$$
Since the problem is rotation-invariant, we can assume $\hat{x}=e_{d+1}=(0, 0, \ldots, 0, 1)$ --- the North pole of the sphere. 

Fix a small number $\tau$, with $\eta < \tau < c_s/20$. We will choose $\tau$ at the end of the proof. Set
$$
\ep:=\tau N^{-1/d}.
$$

Note that $\{e'_1, \ldots, e'_d\}$ is the canonical orthonormal basis in $\R^d$; denote
\begin{alignat*}{2}
v_1&:=e_1, &  v_2&:=-e_1, \\
v_3&:=e_2, & v_4&:=-e_2, \\
& \qquad \qquad \qquad \ldots & \\
v_{2d-1}&:=e_d, & v_{2d}&:=-e_d.
\end{alignat*}
For $j=1, \ldots, 2d$ set 
$$
\tilde{x}_j':=\hat{x}'+\ep v_j=\ep v_j, \;\;\;\;\; \tilde{x}_j(d+1):=\sqrt{1-|\tilde{x}_j'|^2},
$$ 
and $\tilde{x}_j:=x_j$ if $j>2d$. For $\widetilde{\omega}_N:=\{\tilde{x}_1, \ldots, \tilde{x}_N\}$ let $\tilde{y}$ be such that 
$$
P_s(\mathbb{S}^d, \widetilde{\omega}_N) = \sli_{j=1}^N \frac{1}{|\tilde{y}-\tilde{x}_j|^s}.
$$
As before, denote
$$
f(x):=\frac{1}{|\tilde{y}-x|^s}.
$$ 
Estimates
\begin{equation}\label{mainineq}
\sli_{j=1}^N \frac{1}{|\tilde y-x_j|^s}\geqslant \inf_{y\in \S^d}\sli_{j=1}^N \frac{1}{|y-x_j|^s} =\PP_s(\S^d;N)\geqslant P_s(\S^d; \widetilde{\omega}_N)= \sli_{j=1}^N \frac{1}{|\tilde{y}-\tilde{x}_j|^s},
\end{equation}
imply, after utilizing that $x_j=\tilde{x}_j$ for $j\geqslant 2d+1$, that
\begin{equation}
\sli_{j=1}^{2d} \frac{1}{|\tilde y-x_j|^s}\geqslant \sli_{j=1}^{2d} \frac{1}{|\tilde{y}-\tilde{x}_j|^s}.
\end{equation}

Then from Taylor formula about $\hat{x}$ we have for $x\in\{x_1,\ldots, x_{2d}\}$ for some $\xi =\xi(x)\in B(\hat{x}, |x-\hat{x}|)$,
$$
f(x)=f(\hat{x}) + s\frac{(y-\hat{x})\cdot (x-\hat{x})}{|y-\hat{x}|^{s+2}} + \left(-s\cdot \frac{|x-\hat{x}|^2}{|y-\xi|^{s+2}}+s(s+2)\cdot \frac{((y-\xi)\cdot (x-\hat{x}))^2}{|y-\xi|^{s+4}}\right).
$$
Recall that if $x=x_j$, $1\leqslant j\leqslant 2d$, then $|x-\hat{x}|\leqslant \sqrt{5}\eta N^{-1/d}$. Moreover, we know from Lemma \ref{babyhole} that $|\tilde{y}-\tilde{x}_j|\geqslant c_s N^{-1/d}$. This implies 
$$
|\tilde{y}-\hat{x}|=|\tilde{y}-\tilde{x}_1+\ep e_1|\geqslant (c_s-\tau)N^{-1/d}\geqslant (c_s/2)\cdot  N^{-1/d},
$$
and
$$
|\tilde y-\xi|\geqslant |\tilde y-\hat{x}|-|\hat{x}-\xi|\geqslant |\tilde y - \hat{x}|-|x-\hat{x}|\geqslant |\tilde y - \hat{x}|-\sqrt{5}\eta N^{-1/d} \geqslant (1-2\sqrt{5}\eta/c_s)|\tilde y-\hat{x}|.
$$
Therefore, for every $j=1,\ldots, 2d$ we have
$$
f(x_j)\leqslant f(\hat{x})+s\frac{(\tilde y-\hat{x})\cdot(x_j-\hat{x})}{|\tilde y-\hat{x}|^{s+2}} + 5s(s+3)\eta^2 N^{-2/d}(1-2\sqrt{5}\eta/c_s)^{-s-2}\cdot \frac{1}{|\tilde y-\hat{x}|^{s+2}}.
$$
Summing these inequalities over $j$ and recalling that $x_1'+\cdots+x_{2d}'=(2d)\cdot e'=0$, we obtain
\begin{multline}
\sli_{j=1}^{2d}\frac{1}{|\tilde y-x_j|^s} \leqslant 2d\cdot f(\hat{x}) + s \frac{(\tilde y(d+1)-1)\cdot (x_{1}(d+1)+\cdots+x_{2d}(d+1)-2d)}{|\tilde y-\hat{x}|^{s+2}} \\
+ 10sd (s+3)\cdot \eta^2 N^{-2/d}\cdot(1-2\sqrt{5}\eta/c_s)^{-s-2}\cdot \frac{1}{|\tilde y-\hat{x}|^{s+2}}.
\end{multline}

From $|\tilde y(d+1)-1|\leqslant 2$ and $|x_{j}(d+1)-1| = 1-x_j(d+1)\leqslant \eta^2 N^{-2/d}$, we get
$$
\sli_{j=1}^{2d}\frac{1}{|\tilde y-x_j|^s} \leqslant 2d\cdot f(\hat{x}) + \eta^2 N^{-2/d}\cdot \Big(4sd+10sd (s+3)(1-2\sqrt{5}\eta)^{-s-2}\Big)\cdot \frac{1}{|\tilde y-\hat{x}|^{s+2}}.
$$

Plugging this estimate in \eqref{mainineq}, we obtain
\begin{equation}\label{mainineq2}
f(\hat{x}) \geqslant \frac{1}{2d}\sli_{j=1}^{2d}f(\tilde{x}_j) - \eta^2 N^{-2/d}\cdot \Big(2s+5s (s+3)(1-2\sqrt{5}\eta)^{-s-2}\Big)\cdot \frac{1}{|\tilde y-\hat{x}|^{s+2}}.
\end{equation}
We proceed with the Taylor formula for $f(\tilde x_j)$ about $\hat{x}$. We first write it for $j=1$. Recall that $\tilde{x}_1 = (\ep e'_1, \sqrt{1-\ep^2})$. Setting $v:=\tilde{x}_1-\hat{x} = (\ep e'_1, \sqrt{1-\ep^2}-1)$, we obtain for some \\ $\xi \in B(\hat{x}, |\tilde x_1-\hat{x}|) \subset B(\hat{x}, \sqrt{2}\ep)$,
\begin{multline}\label{supertaylor}
f(\tilde x_1)=f(\hat{x}+v) \\= f(\hat{x})+s\frac{(\tilde y-\hat{x})\cdot v}{|\tilde y-\hat{x}|^{s+2}} +
\frac12 \cdot \left(-s\cdot \frac{v\cdot v}{|\tilde y-\hat{x}|^{s+2}}+s(s+2)\frac{((\tilde y-\hat{x})\cdot v)^2}{|\tilde y-\hat{x}|^{s+4}}\right)  \\
+ \frac16 \cdot \left(-3s(s+2)\cdot \frac{((\tilde y-\xi)\cdot v)\cdot (v\cdot v)}{|\tilde y-\xi|^{s+4}} + s(s+2)(s+4)\cdot \frac{((\tilde y-\xi)\cdot v)^3}{|\tilde y-\xi|^{s+6}}\right).
\end{multline}
 
We first estimate the remainder term involving $\xi$. As before,
$$
|\tilde y-\xi|\geqslant |\tilde y - \hat{x}| - |\xi-\hat{x}|\geqslant |\tilde y - \hat{x}|- \sqrt{2}\tau N^{-1/d}\geqslant (1-2\sqrt{2}\tau/c_s)|\tilde y-\hat{x}|.
$$
Thus,
\begin{multline}\label{error}
\left|-3s(s+2)\cdot \frac{((\tilde y-\xi)\cdot v)\cdot (v\cdot v)}{|\tilde y-\xi|^{s+4}} + s(s+2)(s+4)\cdot \frac{((\tilde y-\xi)\cdot v)^3}{|\tilde y-\xi|^{s+6}}\right| \\
\leqslant
 s(s+2)(s+7)\cdot |v|^3 \cdot (1-2\sqrt{2}\tau/c_s)^{-s-3}\cdot \frac{1}{|\tilde y-\hat{x}|^{s+3}} \\
\leqslant 
2\sqrt{2}s(s+2)(s+7)\ep^3 \cdot (1-2\sqrt{2}\tau/c_s)^{-s-3}\cdot \frac{1}{|\tilde y-\hat{x}|^{s+3}}.
\end{multline}

For every $j=1,\ldots, 2d$ write the Taylor formula similar to \eqref{supertaylor}; in view of the estimate \eqref{error}, we get from\eqref{mainineq2},
\begin{multline}
s\cdot \frac{(\tilde y(d+1)-1)(\sqrt{1-\ep^2}-1)}{|\tilde y-\hat{x}|^{s+2}} 
\\ +\frac12 \left(-s\frac{2-2\sqrt{1-\ep^2}}{|\tilde{y}-\hat{x}|^{s+2}}+\frac{s(s+2)}{2d}\cdot \frac{2\ep^2 |\tilde y'|^2 + 2d (\tilde y(d+1)-1)^2 (\sqrt{1-\ep^2}-1)^2}{|\tilde y-\hat{x}|^{s+4}}\right)  \\
-2\sqrt{2}s(s+2)(s+7)\ep^3 \cdot (1-2\sqrt{2}\tau/c_s)^{-s-3}\cdot \frac{1}{|\tilde y-\hat{x}|^{s+3}} \\ 
-\eta^2 N^{-2/d}\cdot \Big(2s+5s (s+3)(1-2\sqrt{5}\eta / c_s)^{-s-2}\Big)\cdot \frac{1}{|\tilde y-\hat{x}|^{s+2}} \leqslant 0.
\end{multline}
Using
$$
2d (\tilde y(d+1)-1)^2 (\sqrt{1-\ep^2}-1)^2\geqslant 0,
$$
dividing by $s$ and multiplying by $|\tilde y-e|^{s+4}$, we obtain
\begin{multline}\label{mainineq33}
(\tilde y(d+1)-1)(\sqrt{1-\ep^2}-1)|\tilde y-\hat{x}|^{2} \\ 
+\frac12 \left(-(2-2\sqrt{1-\ep^2})|\tilde{y}-\hat{x}|^2+\frac{s+2}{2d}\cdot 2\ep^2 |\tilde y'|^2\right)  \\
-2\sqrt{2}(s+2)(s+7)\ep^3 \cdot (1-2\sqrt{2}\tau/c_s)^{-s-3}\cdot |\tilde y-\hat{x}|  \\
-\eta^2 N^{-2/d}\cdot \Big(2+5 (s+3)(1-2\sqrt{5}\eta / c_s)^{-s-2}\Big)\cdot |\tilde y-\hat{x}|^2 \leqslant 0
\end{multline}

Let us simplify first two terms. Notice that $|\tilde y-\hat{x}|^2 = 2-2\tilde y \cdot \hat{x} = 2-2\tilde y(d+1)$. We have:
\begin{multline}
(\tilde y(d+1)-1)(\sqrt{1-\ep^2}-1)|\tilde y-\hat{x}|^{2} \\ 
+\frac12 \left(-(2-2\sqrt{1-\ep^2})|\tilde{y}-\hat{x}|^2+\frac{s+2}{2d}\cdot 2\ep^2 |\tilde y'|^2\right) \\ 
=\tilde{y}(d+1)(\sqrt{1-\ep^2}-1)(2-2\tilde{y}(d+1))+\frac{s+2}{2d}(1-\tilde y(d+1)^2)\ep^2  \\
=|\tilde y-\hat{x}|^2 \cdot \Big((\sqrt{1-\ep^2}-1)\tilde y(d+1)+\ep^2 \frac{s+2}{4d}(1+\tilde y(d+1))\Big).
\end{multline}
If $\tilde{y}(d+1)<0$, we use that $\sqrt{1-\ep^2} -1 \leqslant -\frac{\ep^2}2$ to get
\begin{multline}\label{esttt1}
(\sqrt{1-\ep^2}-1)\tilde y(d+1)+\ep^2 \frac{s+2}{4d}(1+\tilde y(d+1))  \\ 
\geqslant\frac{\ep^2}{2}\left(-\tilde y(d+1)+\frac{s+2}{2d}(1+\tilde y(d+1))\right) 
\geqslant \frac{\ep^2}2 \cdot \min\left(\frac{s+2}{2d}, 1\right).
\end{multline}
If $\tilde{y}(d+1)\geqslant 0$, we use $\sqrt{1-\ep^2} -1 \geqslant -\frac{\ep^2}2-\frac{\ep^4}8$ to get
\begin{multline}\label{esttt2}
(\sqrt{1-\ep^2}-1)\tilde y(d+1)+\ep^2 \frac{s+2}{4d}(1+\tilde y(d+1)) \\ 
\geqslant \frac{\ep^2}{2}\left(-\tilde y(d+1)+\frac{s+2}{2d}(1+\tilde y(d+1))\right)  - \frac{\ep^4}8 \geqslant \frac{\ep^2}2 \min\left(\frac{s+2}{2d}, \frac{s+2-d}{d}\right) - \frac{\ep^4}8.
\end{multline}
Combining estimates \eqref{esttt1} and \eqref{esttt2}, we get
\begin{multline}
(\tilde y(d+1)-1)(\sqrt{1-\ep^2}-1)|\tilde y-\hat{x}|^{2} \\ 
+\frac12 \left(-(2-2\sqrt{1-\ep^2})|\tilde{y}-\hat{x}|^2+\frac{s+2}{2d}\cdot 2\ep^2 |\tilde y'|^2\right)   \\ 
\geqslant |\tilde y - \hat{x}|^2 \cdot \left(\ep^2\min\left(\frac12, \frac{s+2}{4d}, \frac{s+2-d}{2d}\right) - \frac{\ep^4}8\right).
\end{multline}
Plugging this estimate into \eqref{mainineq33} and dividing by $|\tilde y-\hat{x}|^2$, we obtain:
\begin{multline}
\ep^2\min\left(\frac12, \frac{s+2}{4d}, \frac{s+2-d}{2d}\right) - \frac{\ep^4}8  \\
-2\sqrt{2}(s+2)(s+7)\ep^3 \cdot (1-2\sqrt{2}\tau/c_s)^{-s-3}\cdot |\tilde y-\hat{x}|^{-1}  \\ 
-\eta^2 N^{-2/d}\cdot \Big(2+5 (s+3)(1-2\sqrt{5}\eta / c_s)^{-s-2}\Big)  \leqslant 0
\end{multline}
We now recall that $\ep=\tau N^{-1/d}$. Denote 
$$
C(s,d):=\min\left(\frac12, \frac{s+2}{4d}, \frac{s+2-d}{2d}\right).
$$
Then 
\begin{multline}\label{mainineqlast}
C(s,d)\tau^2 - \frac{\tau^4 N^{-2/d}}8  \\ 
-4d\sqrt{2}(s+2)(s+7)(1-2\sqrt{2}\tau/c_s)^{-s-3} \tau^3  \cdot (N^{-1/d}|\tilde y-\hat{x}|^{-1})  \\ 
-\eta^2 \cdot \Big(4d+10d (s+3)(1-2\sqrt{5}\eta / c_s)^{-s-2}\Big)  \leqslant 0.
\end{multline}
We should finally recall that $N^{-1/d}|\tilde y-\hat{x}|^{-1}\leqslant 2/c_s$. Thus, we can choose sufficiently small $\eta$ and $\tau$ such that the left-hand side of \eqref{mainineqlast} is strictly positive, which is a contradiction. Finally, as in Section \ref{sectionconvex}, for large values of $s$ we can choose $\eta=\eta_s$ with $\eta_s^{1/s} \to 1$ as $s\to \infty$.
\end{proof}

We proceed with the same statement for spherical caps $A\subset \S^d$. As in the case of bodies in $\R^d$, we will need to deal with the case when point $\hat{x}$ is near the boundary. 

\begin{corollary}[Weak separation on the caps] \label{weakseparspherecaps}
Consider the unit sphere $\S^d\subset \R^{d+1}$, and $s>d$. Let $A\subset \S^d$ be a spherical cap, $A=\{x\in \S^d\colon x(1)\geqslant t_0\}$. Then there exists a number $\eta>0$ such that for any $N$, any optimal configuration $\omega^*_N$ for $\PP_s(A; N)$, and any point $z\in \R^{d+1}$ we have
$$
\#(\omega_N \cap B(z, \eta N^{-1/d}))\leqslant 2d-1.
$$
Moreover, for large values of $s$ we can choose $\eta=\eta_s$ so that
$$
\lim_{s\to \infty} \eta_s^{1/s} =1.
$$
\end{corollary}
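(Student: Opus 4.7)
The plan is to combine the argument of Theorem \ref{weakseparsphere} with the boundary-handling strategy from Section \ref{sectionconvex}, exploiting the fact that the spherical cap is the spherical analogue of a strongly convex body: the boundary $\partial A = \{x\in\S^d : x(1)=t_0\}$ is a smooth $(d-1)$-dimensional submanifold of $\S^d$ with non-degenerate normal curvature inside $A$.

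First, I would prove a cap analogue of Lemma \ref{notnearboundary}: there is a constant $b_s>0$ (with $b_s^{1/s}\to 1$) such that $\textup{dist}(x,\partial A)\geqslant b_s N^{-2/d}$ for every $x\in \omega_N^*$. The proof transplants the one of Lemma \ref{notnearboundary} to $\S^d$ --- given $x\in \omega_N^*$ at distance less than $b_s N^{-2/d}$ from the nearest boundary point $x_\partial$, push $x$ along the spherical inward normal at $x_\partial$, compare the polarization of the perturbed configuration with $\omega_N^*$, and use the quadratic "spherical Taylor expansion" of $\partial A$ together with Proposition \ref{babyhole} to derive a contradiction on $|\tilde y-\tilde x|$.

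Next, I would argue by contradiction as in Theorem \ref{weakseparsphere}: assume $\{x_1,\ldots,x_{2d}\}\subset \omega_N^*\cap B(z,\eta N^{-1/d})$, average their tangential coordinates and lift back to $\S^d$ to form a point $\hat x \in A$ with $\{x_j\}_{j=1}^{2d}\subset B(\hat x,\sqrt 5\,\eta N^{-1/d})$. Set $\ep = \tau N^{-1/d}$ with $\tau$ to be chosen a suitable multiple of $\eta$, and split:

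\smallskip
\textbf{Case 1:} $\textup{dist}(\hat x,\partial A)\geqslant \ep$. All the $2d$ symmetric perturbations $\tilde x_j$ defined as in the proof of Theorem \ref{weakseparsphere} still lie in $A$, and that proof applies verbatim, producing the contradiction \eqref{mainineqlast}.

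\smallskip
\textbf{Case 2:} $\textup{dist}(\hat x,\partial A)<\ep$. Let $\nu$ be the unit spherical inward normal to $\partial A$ at the nearest boundary point. Mimicking Case 2 of the proof of Theorem \ref{thsepar}, redefine the pair of perturbations in the offending tangential direction by collapsing $\tilde x_{2d-1}=\tilde x_{2d}:=\exp_{\hat x}(-\ep\,\nu)$ (the inward-pointing one), keeping the remaining $2d-2$ perturbations symmetric along directions that stay inside $A$ (which exist because, by the cap analogue of Lemma \ref{vovsestoroni}, at most one tangential symmetric pair can exit $A$). The Taylor expansion then yields an analogue of \eqref{blahblah123}/\eqref{mainineq33}, but with an extra surviving first-order term of the form $-c_d\,\ep\,(\tilde y-\hat x)\cdot\nu / |\tilde y-\hat x|^{s+2}$. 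Since $\tilde y\in A$ lies on the "inside" side of the spherical tangent hyperplane to $\partial A$ at $x_\partial$, and $\hat x$ is within $\tau N^{-1/d}$ of $\partial A$, one gets $(\tilde y-\hat x)\cdot \nu \geqslant -C\tau N^{-1/d}$, exactly as in the strongly convex case. This turns the surviving linear term into a quantity of order at most $O(\tau^2 N^{-2/d}\cdot |\tilde y-\hat x|^{-s-2})$ that can be absorbed into the $\tau^2$ leading term.

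\smallskip
The main obstacle is the bookkeeping in Case 2: one must simultaneously control (i) the spherical-curvature corrections of Theorem \ref{weakseparsphere} (the $(\sqrt{1-\ep^2}-1)$-terms), (ii) the curvature of $\partial A$ in $\S^d$, and (iii) the unbalanced linear term produced by the one-sided perturbation. The key point is to verify that after these contributions are collected, the coefficient of $\tau^2$ in the resulting inequality is still bounded below by a positive multiple of $(s-d)/d$, so that choosing $\tau$ as a suitably large multiple of $\eta$ (depending on $s$), and $\eta$ small, produces a strictly positive lower bound on the left-hand side --- the desired contradiction. As in Theorems \ref{thsepar} and \ref{weakseparsphere}, setting $\eta_s=c_s/s$ and invoking Proposition \ref{babyhole} gives $\eta_s^{1/s}\to 1$ for large $s$.
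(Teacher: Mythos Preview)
Your plan is correct and follows the paper's approach: argue by contradiction, form $\hat x$ by averaging and lifting, distinguish an interior case (handled exactly as in the sphere) from a boundary case, and in the boundary case collapse the offending pair of perturbations to a doubled inward one, then control the surviving linear term using $\tilde y\in A$.

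There are two differences in execution worth flagging. First, you front-load cap analogues of Lemmas \ref{notnearboundary} and \ref{vovsestoroni} in order to guarantee that the remaining $2d-2$ symmetric perturbations stay inside $A$. The paper avoids this entirely: in the boundary case it simply \emph{redefines} the perpendicular perturbations so that they lie on the level set $x(1)=\hat t$ (for $d=2$: $\tilde x_3=(\hat t,\ep,\sqrt{1-\hat t^2-\ep^2})$, $\tilde x_4=(\hat t,-\ep,\sqrt{1-\hat t^2-\ep^2})$), which are in $A$ automatically since $\hat t\ge t_0$. This shortcut makes the $N^{-2/d}$ boundary lemma unnecessary. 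Second, beyond the bound $(\tilde y-\hat x)\cdot\nu\ge -C\tau N^{-1/d}$ (which the paper also uses, in the form $\tilde y(1)\sqrt{1-\hat t^2}-\tilde y(3)\hat t\ge -\ep-c\ep^2$), the paper exploits an extra observation in Case~2: if the linear term is negative at all, then necessarily $|\tilde y-\hat x|^2\le c\ep$, which it uses to neutralise the $|\tilde y-\hat x|^2$–dependent piece of the second-order coefficient arising from the spherical curvature. Your write-up should make sure the second-order coefficient is handled in this regime; simply invoking the sphere estimate \eqref{mainineqlast} for $d-1$ of the directions and absorbing the linear term is not quite enough without tracking how the asymmetric pair alters the $|\tilde y-\hat x|^2$ contribution.
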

\begin{proof}
For the sake of simplicity, we prove this corollary for $d=2$. The case of general $d$ can be treated similarly. We also assume $t_0\geqslant 0$. The case $t_0<0$ is done through the same estimates.

We again argue by contradiction. Assume for some small $\eta>0$ there exists a ball $B(z, \eta N^{-1/2})$ and an extremal configuration $\omega^*_N=\{x_1, \ldots, x_N\}$ such that $\{x_1, \ldots, x_{4}\}\subset B(z, N^{-1/2})$. Set
$$
\hat{x}':=\frac{x_1'+\cdots+x_{4}'}{4},
$$
and 
$$
\hat{x}(3):=\sqrt{1-|\hat{x}'|^2}.
$$
Recall that $x\in A$ if and only if $x(1)\geqslant t_0$. Thus, we see that $\hat{x}'\in A$, and, as before,
$$
\{x_1, \ldots, x_{4}\}\subset B(\hat{x}, \sqrt{5}\eta N^{-1/2}).
$$

Since the problem is rotation invariant, we can assume $\hat{x}=(\hat{t}, 0, \sqrt{1-\hat{t}^2})$ for some $\hat{t}\geqslant t_0$.

We denote
$$
v_1:=(-\sqrt{1-\hat t^2}, 0, \hat t), \;\;\;\; v_2:=(0,1,0).
$$

Set $\ep:=\tau N^{-1/2}$ and consider
\begin{align*}
&\tilde{x}_1:=\Big(\ep \sqrt{1-\hat{t}^2}+\hat{t}\sqrt{1-\ep^2(1-\hat{t}^2)}, 0, -\ep \hat{t}+\sqrt{1-\hat{t}^2}\cdot\sqrt{1-\ep^2(1-\hat{t}^2)}\Big), \\
&\tilde{x}_2:=\Big(-\ep \sqrt{1-\hat{t}^2}+\hat{t}\sqrt{1-\ep^2(1-\hat{t}^2)}, 0, \ep \hat{t}+\sqrt{1-\hat{t}^2}\cdot\sqrt{1-\ep^2(1-\hat{t}^2)}\Big),\\
& \tilde{x}_3:=\Big(\sqrt{1-\ep^2}\hat{t}, \ep, \sqrt{1-\hat{t}^2}\cdot \sqrt{1-\ep^2}\Big), \\
&\tilde{x}_4:=\Big(\sqrt{1-\ep^2}\hat{t}, -\ep, \sqrt{1-\hat{t}^2}\cdot \sqrt{1-\ep^2}\Big).
\end{align*}
If $\tilde{x}_1, \ldots, \tilde{x}_4\in A$, then we get the same contradiction as for the sphere $\mathbb{S}^d$. Thus, the only case we need to consider is when one of these points is not in $A$. 

A direct computation shows that
$$
\tilde{x}_1-\hat{x} = \left(\ep \sqrt{1-\hat{t}^2}-\frac{\hat t(1-\hat{t}^2)}2\ep^2, 0, -\ep t - \frac{(1-\hat{t}^2)^{3/2}}{2}\ep^2\right) + O(\ep^3),
$$
$$
\tilde{x}_2-\hat{x} = \left(-\ep \sqrt{1-\hat{t}^2}-\frac{\hat t(1-\hat{t}^2)}2\ep^2, 0, \ep t - \frac{(1-\hat{t}^2)^{3/2}}{2}\ep^2\right) + O(\ep^3),
$$
$$
\tilde{x}_3-\hat{x}=\left(-\frac{\hat{t}}2\ep^2, \ep, -\frac{\sqrt{1-\hat t^2}}2\ep^2\right)+O(\ep^3),
$$
$$
\tilde{x}_4-\hat{x}=\left(-\frac{\hat{t}}2\ep^2, -\ep, -\frac{\sqrt{1-\hat t^2}}2\ep^2\right)+O(\ep^3).
$$
Thus, $\tilde{x}_1(1)$ and $\tilde{x}_3(1)$ are greater or equal than $t_0$, and if $\tilde{x}_2(1)<t_0$ or $\tilde{x}_4(1)<t_0$, then
\begin{equation}\label{capnearboundary}
\hat{t}-\ep \sqrt{1-\hat{t}^2}-\frac{\hat t(1-\hat{t}^2)}2\ep^2\leqslant t_0.
\end{equation}

If this is the case, we define the points $\tilde{x}_1, \ldots, \tilde{x}_4$ differently; namely,
\begin{align*}
&\tilde{x}_1:=\Big(\ep \sqrt{1-\hat{t}^2}+\hat{t}\sqrt{1-\ep^2(1-\hat{t}^2)}, 0, -\ep \hat{t}+\sqrt{1-\hat{t}^2}\cdot\sqrt{1-\ep^2(1-\hat{t}^2)}\Big), \\
&\tilde{x}_2:=\Big(\ep \sqrt{1-\hat{t}^2}+\hat{t}\sqrt{1-\ep^2(1-\hat{t}^2)}, 0, -\ep \hat{t}+\sqrt{1-\hat{t}^2}\cdot\sqrt{1-\ep^2(1-\hat{t}^2)}\Big),\\
& \tilde{x}_3:=\Big(\hat{t}, \ep, \sqrt{1-\hat{t}^2-\ep^2}\Big), \\
&\tilde{x}_4:=\Big(\hat{t}, -\ep, \sqrt{1-\hat{t}^2-\ep^2}\Big).
\end{align*}
We set $\tilde{x}_j:=x_j$ for $j>4$, $\widetilde{\omega}_N:=\{\tilde{x}_1, \ldots, \tilde{x}_N\}$ and write the same Taylor formulas as before. We get
\begin{equation}
f(\hat{x}) \geqslant \frac{1}{2d}\sli_{j=1}^{2d}f_{\tilde{y}}(\tilde{x}_j) - \eta^2 N^{-2/d}\cdot \Big(2s5s (s+3)(1-2\sqrt{5}\eta/c_s)^{-s-2}\Big)\cdot \frac{1}{|\tilde{y}-\hat{x}|^{s+2}}.
\end{equation}
Expanding $f(\tilde{x}_j)$ about $\hat{x}$ as before, we get
\begin{multline}\label{hessianhessian}
\ep^2\cdot \left(\frac{|\tilde{y}-\hat{x}|^2}{2}(2-\hat t - (s+2)/2)+s\right)  \\
+ 2\ep\left((\tilde{y}(1)-\hat t)\sqrt{1-\hat t^2}-(\tilde{y}(3)-\sqrt{1-\hat t^2})\hat{t}\right)  \\
+\ep^2\cdot \left((\tilde{y}(1)-\hat t)\hat{t}+(\tilde{y}(3)-\sqrt{1-\hat{t}^2})\sqrt{1-\hat t^2} - \frac{\tilde{y}(3)-\sqrt{1-\hat{t}^2}}{\sqrt{1-\hat{t}^2}}\right) 
\\ -4\eta^2 N^{-2/d}\cdot \Big(2s+5s (s+3)(1-2\sqrt{5}\eta/c_s)^{-s-2}\Big) - \; \mbox{remainder terms involving $\xi$} \; \leqslant 0,
\end{multline}
where the remainder terms are handled exactly as in \eqref{error}.

We proceed with showing that the third term can not be a large negative number. In fact,
\begin{equation}\label{againerrorrr}
(\tilde{y}(1)-\hat t)\hat{t}+(\tilde{y}(3)-\sqrt{1-\hat{t}^2})\sqrt{1-\hat t^2} - \frac{\tilde{y}(3)-\sqrt{1-\hat{t}^2}}{\sqrt{1-\hat{t}^2}}=\tilde{y}(1)\hat{t}-\frac{\hat{t}^2}{\sqrt{1-\hat{t}^2}}\tilde{y}(3).
\end{equation}
If $\tilde{y}(3)< 0$, we see that this expression is non-neagtive. Otherwise, plugging
$$
\tilde{y}(1)\geqslant t_0\geqslant \hat{t}-\ep \sqrt{1-\hat{t}^2}-\frac{\hat t(1-\hat{t}^2)}2\ep^2,
$$
and $\tilde{y}(3)\leqslant \sqrt{1-t_0^2}$ into \eqref{againerrorrr}, we obtain
$$
(\tilde{y}(1)-\hat t)\hat{t}+(\tilde{y}(3)-\sqrt{1-\hat{t}^2})\sqrt{1-\hat t^2} - \frac{\tilde{y}(3)-\sqrt{1-\hat{t}^2}}{\sqrt{1-\hat{t}^2}} \geqslant -c\ep
$$
for some non-negative constant $c$, which depends only on $t_0$. 
We finally show how to estimate the second term of \eqref{hessianhessian}. Without loss of generality, we can assume this term is negative, in particular, $\hat{t}\not = 0$. The equality
$$
(\tilde{y}(1)-\hat t)\sqrt{1-\hat t^2}-(\tilde{y}(3)-\sqrt{1-\hat t^2})\hat{t} = \tilde{y}(1)\sqrt{1-\hat{t}^2}-\tilde{y}(3)\hat{t}.
$$
yields
$$
|\tilde{y}-\hat{x}|^2 = 2-2\tilde{y}(1)\hat t-2\tilde{y}(3)\sqrt{1-\hat{t}^2} \leqslant 2-2\tilde{y}(1)/\hat{t} \leqslant 2-2t_0/\hat{t} \leqslant \ep\sqrt{1-\hat{t}^2}+\frac{\hat{t}(1-\hat{t}^2)}2\ep^2 \leqslant c\ep,
$$
where again $c$ is a positive constant which depends only on $t_0$. 
On the other hand, 
$$
\tilde{y}(1)\sqrt{1-\hat{t}^2}-\tilde{y}(3)\hat{t}\geqslant -\ep - c\ep^2.
$$
Thus, inequality \eqref{hessianhessian} implies
$$
\ep^2(c\ep(2-\hat{t}-(s+2)/2)+s)-2\ep^2 -c\ep^3 - \; \mbox{remainder terms} \; \leqslant 0,
$$
which is impossible since $s>2$. 
\end{proof}

\section{Proofs of covering results}\label{sectionmesh}

\begin{proof}[Proof of Theorem \ref{thcovering}]
Fix an integer $N$. Since $\tilde{A}$ is a $d$-regular compact set, there exists 
a finite family of sets $\{Q_\alpha\}_{\alpha}$ with the following properties:
\begin{enumerate}
\item $\tilde{A}=\cup_\alpha Q_\alpha$ and the interiors of the sets $Q_\alpha$ are disjoint; furthermore, $\mu(Q_\alpha)=0$ for every $\alpha$, where $\mu$ is the measure from Definition \ref{defregular};
\item There exists a positive constant $a_1$ that does not depend on $N$, and points $z_\alpha\in Q_\alpha$ such that $B(z_\alpha, a_1\eta N^{-1/d})\cap \tilde{A}\subset Q_\alpha \subset B(z_\alpha, \eta N^{-1/d})$. 
\end{enumerate}
For the construction of such sets see, e.g., \cite{Christ1990}. 
 Notice that since $Q_\alpha \subset B(z_\alpha, \eta N^{-1/d})$, we have $\#(Q_\alpha\cap \omega_N )\leqslant M$.

Let $\mathcal{A}$ denote the set of indices $\alpha$ such that $Q_\alpha\cap \omega_N\not=\emptyset$. Since every $Q_\alpha$ can contain no more than $M$ points from $\omega_N$, we deduce that number of such indices is at least as large as $N/M$. 

Hereafter we follow an argument in \cite{Hardin2012}.

Without loss of generality, we assume $\rho_A(\omega_N)\geqslant 5\eta N^{-1/d}$.
Let $y\in A$ be such that $\min_{x_k\in \omega_N} |y-x_k|=\rho_A(\omega_N)$. For every $x_j\in \omega_N$ let $\alpha_j=\alpha$ denote the index such that $x_j\in Q_\alpha$ for some $\alpha$. If $x\in Q_\alpha$, then
$$
|y-x|\leqslant |y-x_j|+|x_j-x|\leqslant |y-x_j|+2\eta N^{-1/d}\leqslant |y-x_j|+\frac25\rho_A(\omega_N)\leqslant \frac75|y-x_j|.
$$
Consequently, 
\begin{equation}\label{anotherequationnnnn}
|y-x_j|^{-s}\leqslant \left(\frac75\right)^s \cdot \min_{x\in Q_\alpha} |y-x|^{-s}. 
\end{equation}
Furthermore, 
$$
|y-x|\geqslant |y-x_j|-|x_j-x|\geqslant |y-x_j|-2\eta N^{-1/d}\geqslant |y-x_j|-\frac25 \rho_A(\omega_N)\geqslant \frac35 \rho_A(\omega_N),
$$
which implies
$$
A\cap B(y, (3/5) \rho_A(\omega_N))\subset A\setminus \bigcup_{\alpha\in\mathcal{A}}Q_\alpha.
$$

For each $x_j\in Q_\alpha$ we see from \eqref{anotherequationnnnn} that
$$
\frac{1}{|y-x_j|^s}\leqslant \left(\frac 75\right)^s\frac{1}{\mu(Q_\alpha)}\int_{Q_\alpha}\frac{\textup{d}\mu(x)}{|y-x|^s}.
$$
Since $B(z_\alpha, a_1\eta N^{-1/d})\cap\tilde{A}\subset Q_\alpha$, we have by the $d$-regularity condition that $\mu(Q_\alpha)\geqslant c_1\cdot \eta^d/N$, where the positive constant $c_1$ does not depend on $s$. This implies from assumption \eqref{pspsps} that
\begin{multline}\label{anothermultlineee}
p_s N^{s/d}\leqslant \sli_{x_j\in \omega_N}\frac{1}{|y-x_j|^s}\leqslant M\cdot \left(\frac 75\right)^s\sli_{\alpha\in \mathcal{A}} \frac{1}{\mu(Q_\alpha)}\ili_{Q_\alpha}\frac{\textup{d}\mu(x)}{|y-x|^s} \\ 
\leqslant c_1^{-1} M\cdot \left(\frac 75\right)^s \cdot \eta^{-d}\cdot N \ili_{A\setminus B(y, (3/5) \rho_A(\omega_N))}\frac{\textup{d}\mu(x)}{|y-x|^s}\\
\leqslant c_1^{-1}\cdot c_2\cdot \frac{s}{s-d}\cdot M\cdot \left(\frac 75\right)^s \cdot \eta^{-d} \cdot N\cdot \left((3/5) \rho_A(\omega_N)\right)^{d-s},
\end{multline}
where $c_2$ does not depend on $s$. This yields, for $C_d:=c_{1}^{-1}\cdot c_2$,
$$
\rho_A(\omega_N)^{s-d}\leqslant C_d \cdot \frac{s}{s-d}\cdot \left(\frac 75\right)^s \cdot \frac1{p_s}\cdot \eta^{-d} \cdot M \cdot N^{-\frac{s-d}d},
$$
which implies
$$
\rho_A(\omega_N)\leqslant \left(C_d \cdot \frac{s}{s-d}\right)^{\frac{1}{s-d}}\cdot \left(\frac 73\right)^{\frac{s}{s-d}} \cdot p_s^{-\frac{1}{s-d}} \cdot \eta^{-\frac{d}{s-d}} \cdot M^{\frac{1}{s-d}} \cdot N^{-1/d},
$$
as claimed.
\end{proof}

\begin{proof}[Proof of Corollary \ref{coveringclopen}]
First, we prove that for any $\omega_N$ that is extremal for $\PP_s(A; N)$, there exists a positive constant $p_s$ with
$$
\inf_{y\in A} \sli_{x_j\in \omega_N} \frac{1}{|y-x_j|^s} \geqslant p_s N^{s/d}.
$$
We prove it for strongly convex $A\subset \R^d$ or $A=[0,1]^d$. The case $A=\mathbb{S}^d$ is similar. First, notice that for any $z\in A$ we have $A\subset z+[-\diam(A), \diam(A)]^d=:Q$.
For a fixed $N$ and a fixed constant $a$, consider a maximal set $\mathcal{E}$ such that for any $x,y\in \mathcal{E}$ we have $|x-y|\geqslant aN^{-1/d}$. The maximality of $\mathcal{E}$ implies that 
$$
A\subset \bigcup_{x\in \mathcal{E}}B(x, aN^{-1/d});
$$
thus $\rho_A(\mathcal{E})\leqslant aN^{-1/d}$.

On the other hand, we see that the sets $B(x, (a/3)N^{-1/d})\cap Q$ are disjoint. Thus, 
$$
\H_d(Q)\geqslant c_1 \cdot a^d\cdot N^{-1}\cdot \#(\mathcal{E}),
$$
which implies
$$
\#(\mathcal{E})\leqslant c_2 a^{-d}N,
$$
where $c_1$ and $c_2$ are positive constants that depend on $d$.
We now choose $a$ such that $c_2 a^{-d}=1$. This implies that there exists an $N$-point set $\widetilde{\omega}_N$ such that
$$
A\subset \bigcup_{\tilde{x}_j\in\widetilde\omega_N} B(\tilde x_j, aN^{-1/d}),
$$
where the number $a$ depends only on $A$ and $d$. In particular, $\rho_A(\widetilde{\omega}_N)\leqslant aN^{-1/d}$.

Observe that
\begin{multline}\label{frombelowcovering}
\inf_{y\in A} \sli_{x_j\in \omega_N} \frac{1}{|y-x_j|^s} =\PP_s(A; N)\geqslant P_s(A; \widetilde\omega_N)=\inf_{y\in A}\max_{\tilde x_j\in \widetilde\omega_N}\frac{1}{|y-\tilde x_j|^s} \\ =\frac{1}{\max_{y\in A}\min_{\tilde x_j\in \widetilde\omega_N} |y-\tilde x_j|^s} = \rho_A(\widetilde \omega_N)^{-s} \geqslant a^{-s} N^{s/d}.
\end{multline}
Thus, we can apply Theorem \ref{thcovering} with $p_s=a^{-s}$ to obtain
$$
\rho_{A}(\omega_N)\leqslant R_s N^{-1/d}
$$
for 
$$
R_s=\left(\frac{C_d\cdot M \cdot s\cdot  7^s \cdot a^s}{(s-d)\cdot 5^s \cdot \eta_s^{d} }\right)^{\frac{1}{s-d}},
$$
where $\eta_s$ is the constant from Theorem \ref{thsepar} or Theorem \ref{thseparcube}.

To complete the proof, recall that we have $\lim_{s\to \infty}\eta_s^{1/s} =1$, therefore for large values of $s$ we have $R_s\leqslant R_0$ for some positive $R_0$.
\end{proof}

\section{Proof of Best Covering Results}\label{sectionlimit}

We begin by remarking that in Section \ref{sectionmesh} we have seen that if $A$ is $d$-regular, then for some positive constants $a$ and $b$ we have $aN^{-1/d}\leqslant \rho_A(N) \leqslant bN^{-1/d}$, where $\rho_A(N)$ is defined in \eqref{defbestcovering}.

\begin{proof}[Proof of Theorem \ref{bestcovering}]
Using the same argument as in \eqref{frombelowcovering}, we see that
$$
\PP_s(A; N)\geqslant \frac{1}{\rho_A(N)^{s}}.
$$
Therefore,
$$
\left(\lim_{N\to \infty} \frac{\PP_s(A; N)}{N^{s/d}}\right)^{1/s}\geqslant \frac{1}{\liminf_{N\to \infty}(N^{1/d}\rho_A(N))},
$$
which implies
\begin{equation}\label{liminfliminf}
\liminf_{s\to\infty}\left(\lim_{N\to \infty} \frac{\PP_s(A; N)}{N^{s/d}}\right)^{1/s}\geqslant \frac{1}{\liminf_{N\to \infty}(N^{1/d}\rho_A(N))}.
\end{equation}

On the other hand, for a fixed positive integer $N$ and large $s$ consider an $N$-point configuration $\omega^*_N=\{x_1, \ldots, x_N\}$ such that $
\PP_s(A; N)=P_s(A; \omega^*_N)$. Corollary \ref{coveringclopen} implies that if $s$ is large enough, then $\rho_A(\omega^*_N)\leqslant R_0 N^{-1/d}$, where $R_0$ depends neither on $N$, nor on $s$. We also recall that the Theorems \ref{thsepar} and \ref{thseparcube} imply that for any large value of $s$ there exists a number $\eta_s>0$ such that for any $z\in \R^d$ we have $\#(\omega^*_N\cap B(z, \eta_s N^{-1/d}))\leqslant 2d-1$ and $\lim_{s\to \infty}\eta_s^{1/s}=1$.  

We now take a point $y\in A$ such that
\begin{equation}\label{centerofhole}
\min_{j=1,\ldots, N}|y-x_j|=\rho_A(\omega^*_N),
\end{equation}
and set
$$ 
B_n:=B(y, \, n \rho_A(\omega^*_N)) \setminus B(y, \,(n-1) \rho_A(\omega^*_N)), 
$$
where $n$ is an integer with $n\geqslant 2$. Since the open ball $B(y, \rho_A(\omega^*_N))$ does not intersect $\omega^*_N$, we have
$$
\omega^*_N\subset \bigcup_{n=2}^\infty B_n.
$$
Notice that for any $n\geqslant 2$ we have $B_n\subset B(y, \, nR_0N^{-1/d})$; thus, there exists a constant $\tilde{C}_1$ that does not depend on $s$ such that the annulus $B_n$ can be covered by $\tilde C_1 R_0^d  n^d\eta_s^{-d}=:C_2 n^d \eta_s^{-d}$ balls of radius $\eta N^{-1/d}$. Thus, for any $n\geqslant 2$ we have
$$
\#(B_n\cap \omega^*_N)\leqslant  C_2 (2d-1)  n^d \eta_s^{-d} =:C_3  n^d  \eta_s^{-d}.
$$
For $y$ defined in \eqref{centerofhole} we have
$$
\PP_s(A; N)\leqslant \sli_{x\in \omega^*_N}\frac{1}{|y-x|^s}\leqslant \sli_{n=2}^\infty \left(\sli_{x\in \omega^*_N \cap B_n}\frac{1}{|y-x|^s}\right).
$$
By the definition of $B_n$, for any $x\in B_n$ we have $|y-x|\geqslant (n-1)\rho_A(\omega^*_N)$, which implies
\begin{equation}\label{aibolit}
\PP_s(A; N)\leqslant \sli_{n=2}^\infty C_3 n^d \eta_s^{-d} (n-1)^{-s}\rho_A(\omega^*_N)^{-s} = C_3 \eta_s^{-d}\rho_A(\omega^*_N)^{-s} \sli_{n=2}^\infty n^{d} (n-1)^{-s}.
\end{equation}
Dividing by $N^{s/d}$ and using that $\rho_A(\omega^*_N)\geqslant \rho_A(N)$, we obtain
\begin{equation}\label{trubadur}
\frac{\PP_s(A; N)}{N^{s/d}}\leqslant C_3 \eta_s^{-d} \sli_{n=1}^\infty n^{d-s} \cdot \left(\frac{1}{N^{1/d}\rho_A(N)}\right)^{s},
\end{equation}
which implies
\begin{equation}\label{petuh}
\left(\lim_{N\to \infty}\frac{\PP_s(A; N)}{N^{s/d}}\right)^{1/s}\leqslant C_3^{1/s} \eta_s^{-d/s} \left(\sli_{n=2}^\infty n^{d-s}\right)^{1/s} \cdot \frac{1}{\limsup_{N\to\infty}(N^{1/d}\rho_A(N))}.
\end{equation}
Taking $\limsup_{s\to \infty}$, we obtain
\begin{equation}\label{limsuplimsup}
\limsup_{s\to\infty}\left(\lim_{N\to \infty}\frac{\PP_s(A; N)}{N^{s/d}}\right)^{1/s}\leqslant \frac{1}{\limsup_{N\to\infty}(N^{1/d}\rho_A(N))}.
\end{equation}
Estimates \eqref{liminfliminf} and \eqref{limsuplimsup} imply that $\lim_{N\to \infty}N^{1/d}\rho_A(N)$ and $\lim_{s\to \infty}\left(\lim_{N\to \infty}\PP_s(A; N)N^{-s/d}\right)^{1/s}$ exist and satisfy
$$
\lim_{s\to\infty}\left(\lim_{N\to \infty}\frac{\PP_s(A; N)}{N^{s/d}}\right)^{1/s}= \frac{1}{\lim_{N\to\infty}(N^{1/d}\rho_A(N))}.
$$
\end{proof}
As an immediate consequence of Theorem \ref{bestcovering} we state the following corollary about behavior of covering radii of optimal $s$-Riesz polarization configurations as $s\to \infty$.
\begin{corollary}
Suppose $A$ is a $d$-admissible set or $A=[0,1]^d$. For every $N\geqslant 1$ and every $s>d$ fix an $N$-point configuration $\omega_N^s$ such that $\PP_s(A; N)=P_s(A; \omega^s_N)$. Then the following limits exist and satisfy
\begin{equation}\label{chichi}
\lim_{s\to\infty}\lim_{N\to\infty}N^{1/d}\rho_A(\omega_N^s) = \lim_{N\to \infty}N^{1/d}\rho_A(N).
\end{equation}
\end{corollary}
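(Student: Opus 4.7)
The plan is to extract the corollary directly from the intermediate estimates obtained while proving Theorem \ref{bestcovering}, combined with the trivial lower bound $\rho_A(\omega_N^s)\geqslant \rho_A(N)$. No new geometric input is needed; the task is purely to re-read the proof of Theorem \ref{bestcovering} with a slightly different emphasis.

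First, I would observe that by definition of $\rho_A(N)$ we have $\rho_A(\omega_N^s)\geqslant \rho_A(N)$ for every $N$ and every $s$, hence
\[
\liminf_{s\to\infty}\liminf_{N\to\infty} N^{1/d}\rho_A(\omega_N^s)\geqslant \lim_{N\to\infty} N^{1/d}\rho_A(N),
\]
where the limit on the right exists by Theorem \ref{bestcovering}. This gives the easy half of \eqref{chichi}.

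For the reverse inequality, I would revisit inequality \eqref{aibolit}, which was derived exactly for the extremal configuration $\omega_N^*$. Relabelling this configuration as $\omega_N^s$, the same argument yields
\[
\PP_s(A;N)\leqslant C_3\,\eta_s^{-d}\,\rho_A(\omega_N^s)^{-s}\sum_{n=2}^\infty n^d(n-1)^{-s}.
\]
Rearranging and multiplying by $N^{1/d}$ gives
\[
N^{1/d}\rho_A(\omega_N^s)\leqslant \bigl(C_3\eta_s^{-d}\bigr)^{1/s}\Bigl(\sum_{n=2}^\infty n^d(n-1)^{-s}\Bigr)^{1/s}\Bigl(\frac{N^{s/d}}{\PP_s(A;N)}\Bigr)^{1/s}.
\]
Letting $N\to\infty$, Theorem \ref{thborodachovetal} identifies the last factor with $\bigl(\sigma_{s,d}/\H_d(A)^{s/d}\bigr)^{-1/s}$, so
\[
\limsup_{N\to\infty} N^{1/d}\rho_A(\omega_N^s)\leqslant \bigl(C_3\eta_s^{-d}\bigr)^{1/s}\Bigl(\sum_{n=2}^\infty n^d(n-1)^{-s}\Bigr)^{1/s}\Bigl(\frac{\sigma_{s,d}}{\H_d(A)^{s/d}}\Bigr)^{-1/s}.
\]

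Now I would take $s\to\infty$. Since $\eta_s^{1/s}\to 1$ by Theorems \ref{thsepar} and \ref{thseparcube}, the factor $(C_3\eta_s^{-d})^{1/s}\to 1$; since $\sum_{n\geqslant 2}n^d(n-1)^{-s}\to 2^d$ as $s\to\infty$, the middle factor also tends to $1$; and by Theorem \ref{bestcovering} the last factor tends to $\lim_{N\to\infty} N^{1/d}\rho_A(N)$. Combining with the lower bound obtained first, both iterated limits are squeezed together, so $\lim_{s\to\infty}\lim_{N\to\infty} N^{1/d}\rho_A(\omega_N^s)$ exists and equals $\lim_{N\to\infty} N^{1/d}\rho_A(N)$. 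There is no real obstacle here; the only mild subtlety is the bookkeeping ensuring that the multiplicative constants $(C_3\eta_s^{-d})^{1/s}$ and $\bigl(\sum n^d(n-1)^{-s}\bigr)^{1/s}$ both converge to $1$, which follows immediately from $\eta_s^{1/s}\to 1$ and the fact that the tail of the series is negligible for large $s$.
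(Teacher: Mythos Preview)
Your proposal is correct and follows essentially the same route as the paper: both obtain the upper bound by rearranging inequality \eqref{aibolit} and then invoking Theorem \ref{bestcovering} to handle the limit in $s$. The only (harmless) difference is in the lower bound: the paper derives it from $\PP_s(A;N)\geqslant \rho_A(\omega_N^s)^{-s}$ together with \eqref{princess}, whereas you use the more direct observation $\rho_A(\omega_N^s)\geqslant \rho_A(N)$, which is a slight simplification.
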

\begin{proof}
Arguing as in \eqref{frombelowcovering}, we get that
$$
\PP_s(A; N)\geqslant \frac{1}{\rho_A(\omega_N^s)^{s}},
$$
which implies from \eqref{princess} that 
$$
\lim_{N\to\infty}N^{1/d}\rho_A(N)=\lim_{s\to\infty}\left(\lim_{N\to \infty}\frac{\PP_s(A; N)}{N^{s/d}}\right)^{-1/s} \leqslant \liminf_{s\to\infty}\Big[\liminf_{N\to\infty}N^{1/d}\rho_A(\omega_N^s)\Big].
$$
On the other hand, arguing as in \eqref{aibolit}, \eqref{trubadur} and \eqref{petuh} we get
$$
\lim_{N\to\infty}N^{1/d}\rho_A(N)=\lim_{s\to\infty}\left(\lim_{N\to \infty}\frac{\PP_s(A; N)}{N^{s/d}}\right)^{-1/s}\geqslant \limsup_{s\to\infty}\Big[\limsup_{N\to\infty}(N^{1/d}\rho_A(\omega_N^s))\Big]	,
$$
and \eqref{chichi} follows.
\end{proof}

\section{Proof of Proposition \ref{babyhole}}\label{secappendix}

\begin{proof}[Proof of Proposition \ref{babyhole} for $s>d$]

Take a positive integer $N$, an $N$-point configuration $\omega_N$ and the point $y^*$. Theorem \ref{botest} implies, for any $j=1,\ldots, N$,
\begin{align}
\begin{split}
&C_s \cdot  N^{s/d}\geqslant \PP_s(A; N) \\
&\geqslant P_s(A; \omega_{N})=\sli_{x\in \omega_{N}} \frac{1}{|y^*-x|^s} \geqslant \frac{1}{|y^* - x_{j}|^s} = N^{s/d}\cdot (N^{1/d}\cdot|y^*-x_{j}|)^{-s};
\end{split}
\end{align}
therefore, $|y^*-x_j|\geqslant C_s^{-1/s} \cdot N^{-1/d}=:c_sN^{-1/d}$. 
\end{proof}

To prove Proposition \ref{babyhole} for the case $A=\S^d$ and $s\in [d-1, d)$ we set 
$$
U(y)=U_s(y):=\frac{1}{\H_d(\S^d)}\int_{\S^d} \frac{\textup{d}\H_d(x)}{|x-y|^s}.
$$
Then it is well known (see, e.g., \cite{Landkof1972}) that if $s\in (0,d)$ then $U(y)$ is constant of $\mathbb{S}^d$, and we denote this constant by $\gamma_{s,d}$ \footnote{$\gamma_{s,d}$ is the Wiener constant (maximal $s$-energy constant) on $\mathbb{S}^d$.}. 

We need the following lemma, which can be found in \cite{Kuijlaars2007}.
\begin{lemma}
For each $s\in [d-1, d)$ there exists a constant $C=C(s,d)$ such that for every $y$ with $|y|=1+N^{-1/d}$ we have 
\begin{equation}
U(y)\geqslant \gamma_{s,d}-CN^{-1+s/d}.
\end{equation}
Furthermore, if for a constant $c$ and an $N$-point configuration $\omega_N\subset \mathbb{S}^d$ we have $U(y)\leqslant c\cdot U^{\omega_N}(y)$, where
$$
U^{\omega_N}(y)=U^{\omega_N}_s(y) := \frac{1}N\sli_{x\in \omega_N} \frac{1}{|x-y|^s},
$$
then the same inequality holds for every $y\in \R^{d+1}$.
\end{lemma}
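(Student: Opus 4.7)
The plan is to prove the two claims separately. For the first claim (the pointwise lower bound on $U$), I would use the rotational invariance of the surface measure on $\S^d$ to assume without loss of generality that $y = (1+\rho)\,e_{d+1}$ with $\rho := N^{-1/d}$. A direct expansion (using $|x|=1$ and $|x-e_{d+1}|^2 = 2-2x_{d+1}$) yields the algebraic identity
$$|x-y|^2 \;=\; (1+\rho)\,|x-e_{d+1}|^2 + \rho^2 \qquad (x\in\S^d),$$
so that
$$\gamma_{s,d} - U(y) \;=\; \frac{1}{\H_d(\S^d)}\int_{\S^d}\Bigl(|x-e_{d+1}|^{-s} - |x-y|^{-s}\Bigr)\,d\H_d(x).$$

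The next step is to split this integral at the threshold $|x-e_{d+1}|=\rho$. In the near zone $\{x\in\S^d: |x-e_{d+1}|\leqslant\rho\}$, the $d$-regularity of $\S^d$ (i.e.\ $\H_d(\S^d\cap B(e_{d+1},r))\asymp r^d$ for small $r$) gives $\int_{\text{near}}|x-e_{d+1}|^{-s}\,d\H_d \lesssim \rho^{d-s}/(d-s)$; moreover, the identity forces $|x-y|\geqslant \rho$, so $|x-y|^{-s}\leqslant \rho^{-s}$ contributes $\lesssim \rho^d\cdot\rho^{-s}=\rho^{d-s}$ as well. In the far zone $\{|x-e_{d+1}|>\rho\}$, writing $|x-y|^{-s} = |x-e_{d+1}|^{-s}\,q^{-s/2}$ with $q := (1+\rho)+\rho^2|x-e_{d+1}|^{-2} > 1$, convexity of $t\mapsto (1+t)^{-s/2}$ yields $1-q^{-s/2}\leqslant (s/2)(q-1)$, hence
$$|x-e_{d+1}|^{-s} - |x-y|^{-s} \;\leqslant\; \tfrac{s}{2}\,\rho\,|x-e_{d+1}|^{-s} + \tfrac{s}{2}\,\rho^2\,|x-e_{d+1}|^{-s-2},$$
whose integrals over the far zone (using $s<d$) are of size $\lesssim \rho$ and $\lesssim \rho^{d-s}$ respectively. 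Because $s\geqslant d-1$ forces $d-s\leqslant 1$ and hence $\rho \leqslant \rho^{d-s}$ for $\rho<1$, all four contributions are $O(\rho^{d-s}) = O(N^{-1+s/d})$, giving the first claim.

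For the second claim, I would appeal to the classical \emph{principle of domination} for Riesz potentials in $\R^n$ with exponent in $(0,n)$; see for instance \cite{Landkof1972}. Here $n = d+1$ and $s\in[d-1,d)\subset (0,d+1)$, so the principle applies. Take $\mu := \H_d|_{\S^d}/\H_d(\S^d)$, which has compact support $\S^d$ and finite Riesz $s$-energy (since $s<d = \dim\S^d$), and $\nu := (c/N)\sum_{j=1}^{N}\delta_{x_j}$. Then $U^\mu = U$, $U^\nu = c\,U^{\omega_N}$, and the hypothesis $U^\mu\leqslant U^\nu$ on $\supp(\mu)=\S^d$ combined with the domination principle yields $U^\mu\leqslant U^\nu$ on all of $\R^{d+1}$, which is the claim.

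The technical heart of the argument is the first part, where one must balance the near- and far-zone contributions so as to extract the sharp exponent $d-s$; it is precisely the restriction $s\geqslant d-1$ (equivalently $d-s\leqslant 1$) that makes the far-zone bound $\rho$ absorbable into the desired $\rho^{d-s}$. The second part is essentially a direct invocation, the only verification being that the normalized surface measure on $\S^d$ has finite Riesz $s$-energy, which is immediate from $s<d$.
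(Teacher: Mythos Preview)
The paper does not give its own proof of this lemma; it simply attributes the result to \cite{Kuijlaars2007}. Your argument is correct and self-contained. The first part is a clean near/far decomposition around the pole $e_{d+1}$ driven by the exact identity $|x-y|^2 = (1+\rho)\,|x-e_{d+1}|^2 + \rho^2$, and your bookkeeping is right: the near-zone bound uses only $s<d$, the far-zone term $\tfrac{s}{2}\rho^2\int|x-e_{d+1}|^{-s-2}$ needs $s>d-2$ (guaranteed by $s\geqslant d-1$) to yield $\rho^{d-s}$, and the condition $s\geqslant d-1$ is exactly what lets you absorb the $O(\rho)$ far-zone piece into $O(\rho^{d-s})$. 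One small redundancy: in the near zone the integrand is nonnegative, so bounding it above by $|x-e_{d+1}|^{-s}$ alone already suffices; the separate $|x-y|^{-s}$ estimate is unnecessary. The second part is indeed nothing more than the Riesz domination principle in $\R^{d+1}$ for exponent $s\in(0,d+1)$, applied with $\mu$ the normalized surface measure on $\S^d$ (finite $s$-energy since $s<d$) and $\nu$ the scaled counting measure; your verification of the hypotheses is complete.
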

\begin{proof}[Proof of Proposition \ref{babyhole} for $A=\S^d$ and $s\in [d-1, d)$]
Fix an $N$-point configuration $\omega_N=\{x_1, \ldots, x_N\}$ and set $\gamma:=P_s(\S^d; \omega_N)$. For every $y\in \S^d$ we have
$$
U^{\omega_N}(y)\geqslant \frac{\gamma}N = \frac{\gamma}{\gamma_{s,d}\cdot N} \cdot U(y);
$$
thus, for every $y$ with $|y|=1+N^{-1/d}$ we have
$$
U^{\omega_N}(y)\geqslant \frac{\gamma}{\gamma_{s,d}\cdot N} \cdot (\gamma_{s,d}-CN^{-1+s/d}) = \frac{\gamma - C_1 \cdot \gamma \cdot N^{-1+s/d}}{N}.
$$
Notice that 
$$
\gamma=\inf_{y\in \S^d}\sli_{j=1}^N \frac{1}{|x_j-y|^s} \leqslant \frac{1}{\H_d(\mathbb{S}^d)}\sli_{j=1}^N \ili_{\S^d}\frac{\textup{d}\H_d(y)}{|x_j-y|^s} = \gamma_{s,d}\cdot N,
$$
which implies that for every $y$ with $|y|=1+N^{-1/d}$, we have
\begin{equation}\label{korol}
\sli_{j=1}^N\frac{1}{|x_j-y|^s} = NU^{\omega_N}(y)\geqslant \gamma - C_2 N^{s/d}.
\end{equation}
With $y^*$ as in the statement of Proposition \ref{babyhole}, set $y:=(1+N^{-1/d})\cdot y^*$. Then for every $j=1,\ldots, N$ we have
$|x_j-y|\geqslant |x_j-y^*|$. Therefore, for every $i=1,\ldots, N$, if follows from \eqref{korol} that
$$
\gamma-C_2 N^{s/d} - \frac{1}{|y-x_i|^s} \leqslant \sli_{j\not = i}\frac{1}{|y-x_j|^s} \leqslant \sli_{j\not=i} \frac{1}{|y^*-x_j|^s} = \gamma - \frac{1}{|y^*-x_i|^s}.
$$
We now use that $|x_i-y|\geqslant N^{-1/d}$ to get
$$
\frac{1}{|y^*-x_i|^s}\leqslant (C_2+1)N^{s/d},
$$
which completes the proof. 
\end{proof}
\section{Appendix: equivalent definition of best covering of the Euclidean space $\R^d$}\label{appendix}
Assume $\mathcal{B}\subset \R^d$ is a family of unit balls. The density of $\mathcal{B}$ is defined by
\begin{equation}\label{limdensity}
\Delta(\mathcal{B}):=\lim_{R\to \infty} \frac{\sum_{B\in \mathcal{B}} \H_d(B\cap [-R,R]^d)}{(2R)^d}
\end{equation}
whenever the limit exists. The optimal covering density for $\R^d$ is defined by 
$$
\Gamma_d:=\inf \Delta(\mathcal{B}),
$$
where the infimum is taken over all families $\mathcal{B}$ that cover $\R^d$.

It is known, see \cite[Chapter 2]{ConwSlBook} and \cite{Borodachov2016}, that $\Gamma_1$ is attained for balls centered on the lattice $2\mathbb{Z}$ and $\Gamma_{2}$ is attained for balls centered on the properly rescaled equi-triangular lattice. For higher dimensions no explicit results are known; however, if we minimize only over lattices, then it is known that for $d\leqslant 5$ an optimal lattice is the properly rescaled $A_d:=\{(x_1, \ldots, x_{d+1})\in \mathbb{Z}^{d+1}\colon x_1+\cdots+x_{d+1}=0\}$, which is a lattice in a $d$-dimensional hyperplane. 

We start by proving the following lemma.
\begin{lemma}\label{lemmatratata}
If $V_d=\H_d(\mathbb{B}^d)$, $\mathcal{B}$ covers $\R^d$ and the limit \eqref{limdensity} exists, then 
$$
\frac{\Delta(\mathcal{B})}{V_d}=\lim_{R\to \infty} \frac{\#\left\{B\in \mathcal{B}\colon \text{center of $B$ is in $[-R,R]^d$}\right\}}{(2R)^d}.
$$
Conversely, if the limit in the right-hand side exists, then $\Delta(\mathcal{B})$ exists as well and $\Delta(\mathcal{B})/V_d$ is equal to this limit.
\end{lemma}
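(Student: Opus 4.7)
The plan is to reduce both directions of the lemma to a single two-sided bound relating the set function $S(R) := \sum_{B \in \mathcal{B}} \H_d(B \cap [-R,R]^d)$ to the counting function $N(R) := \#\{B \in \mathcal{B} : \text{center of } B \in [-R,R]^d\}$. The geometric observation I would use is the following: if the center of a unit ball lies in the shrunken cube $[-R+1, R-1]^d$, then the whole ball lies in $[-R, R]^d$ and contributes exactly $V_d$ to $S(R)$; conversely, only balls whose centers lie in the enlarged cube $[-R-1, R+1]^d$ can intersect $[-R, R]^d$ in positive measure (any coordinate of the center exceeding $R+1$ in absolute value pushes the whole ball out of the corresponding slab). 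Together these two facts yield
\begin{equation}\label{sandwichbound}
V_d \cdot N(R-1) \;\le\; S(R) \;\le\; V_d \cdot N(R+1)
\end{equation}
for every $R > 1$.

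For the first implication, I would assume the right-hand limit in the lemma exists, call it $L$, so that $N(R)/(2R)^d \to L$. Dividing \eqref{sandwichbound} by $(2R)^d$ and using the elementary fact $(2(R\pm 1))^d / (2R)^d \to 1$, both $V_d N(R-1)/(2R)^d$ and $V_d N(R+1)/(2R)^d$ tend to $V_d L$. The sandwich then forces $S(R)/(2R)^d \to V_d L$, which is precisely $\Delta(\mathcal{B}) = V_d L$.

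For the converse direction, I would assume $\Delta(\mathcal{B}) = \lim_R S(R)/(2R)^d$ exists. Re-indexing \eqref{sandwichbound} by substituting $R \mapsto R+1$ in the left inequality and $R \mapsto R-1$ in the right inequality isolates $N(R)$:
\[
\frac{S(R-1)}{V_d} \;\le\; N(R) \;\le\; \frac{S(R+1)}{V_d}.
\]
Dividing by $(2R)^d$ and applying the same $(2(R\pm 1))^d/(2R)^d \to 1$ correction, both outer terms converge to $\Delta(\mathcal{B})/V_d$, so $N(R)/(2R)^d$ converges to this common value. The argument is entirely elementary and I do not anticipate a substantive obstacle; the only point requiring care is the bookkeeping in the index shifts to make sure the correct $R \pm 1$ appears on each side of the sandwich.
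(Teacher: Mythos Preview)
Your proof is correct and follows essentially the same approach as the paper: both establish a two-sided sandwich of the form $V_d \cdot N(R-c) \le S(R) \le V_d \cdot N(R+c)$ and then pass to the limit using $(2(R\pm c))^d/(2R)^d \to 1$. The only cosmetic differences are that the paper uses the (slightly looser) constants $c=2$ rather than your $c=1$, and that it leaves the final squeeze argument implicit while you spell out both directions.
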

\begin{proof}
Define $\mathcal{B}_R:=\left\{B\in \mathcal{B}\colon \mbox{center of $B$ is in $[-R,R]^d$}\right\}$. We estimate
\begin{equation}\label{tutuf}
\sum_{B\in \mathcal{B}} \H_d(B\cap [-R,R]^d)\geqslant \sum_{B\in \mathcal{B}_{R-2}} \H_d(B\cap [-R,R]^d) = V_d \cdot \#\mathcal{B}_{R-2}.
\end{equation}
On the other hand, if $B\cap [-R,R]^d \not=\emptyset$, then the center of $B$ is in $[-R-2, R+2]^d$. Therefore,
\begin{equation}\label{multik}
\sum_{B\in \mathcal{B}} \H_d(B\cap [-R,R]^d) \leqslant \sum_{B\in \mathcal{B}_{R+2}}\H_d(B\cap [-R,R]^d) \leqslant V_d \cdot \#\mathcal{B}_{R+2}.
\end{equation}
Estimates \eqref{tutuf} and \eqref{multik} obviously imply assertion of the lemma.
\end{proof}
We continue with more equivalent definitions of $\Gamma_d$. For a compact set $A\subset \R^d$ and a positive number $r$ put 
$$
N_A(r):=\min\Big\{N\in \mathbb{N}\colon \exists \omega_N=\{x_1, \ldots, x_N\}\subset A \; \mbox{such that} \; A\subset \cup_{j=1}^N B(x_j, r)\Big\}.
$$
A simple rescaling argument yields for every $R>0$
$$
N_{[-R, R]^d}(1) = N_{[0,1]}(1/2R).
$$
We show the following.
\begin{theorem}
For every $d \in \mathbb{N}$ we have
\begin{equation}\label{tyanitolkay}
\frac{\Gamma_d}{V_d}=\lim_{R\to \infty} \frac{N_{[-R, R]^d}(1)}{(2R)^d} = \lim_{r\to 0} r^d N_{[0,1]^d}(r) = \lim_{N\to \infty} N\cdot \rho_{[0,1]^d}(N)^d = \lim_{s\to \infty}(\sigma_{s,d})^{-d/s}.
\end{equation}
\end{theorem}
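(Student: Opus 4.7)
The plan is to verify the chain of four identities one at a time. The first equality is an exact pointwise identity: the substitution $r=1/(2R)$ in the scaling $N_{[-R,R]^d}(1)=N_{[0,1]^d}(1/(2R))$ noted in the text gives $r^d N_{[0,1]^d}(r)=N_{[-R,R]^d}(1)/(2R)^d$ for every $R>0$, so the two one-sided limits coincide termwise. For the second equality, I would exploit the pseudo-inverse relation $\rho_A(N_A(r))\leqslant r<\rho_A(N_A(r)-1)$. Setting $N=N_A(r)$ yields $N\rho_A(N)^d\leqslant Nr^d<N\rho_A(N-1)^d$; the bound $\rho_A(N)\asymp N^{-1/d}$ (implicit in Section \ref{sectionmesh}) gives $N\rho_A(N-1)^d=\frac{N}{N-1}\cdot(N-1)\rho_A(N-1)^d$, which has the same limit as $N\rho_A(N)^d$, so the sandwich forces both limits to exist simultaneously and agree.

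The third equality, $\lim_{R\to\infty} N_{[-R,R]^d}(1)/(2R)^d=\Gamma_d/V_d$, is the main new content and will be proved by matching $\liminf$ and $\limsup$. For the lower bound, given an optimal cube-covering $\omega_N=\{x_1,\dots,x_N\}\subset[-R,R]^d$ with $N=N_{[-R,R]^d}(1)$, I would form the periodic family $\mathcal{B}:=\{B(x_j+2Rk,1):j=1,\dots,N,\,k\in\Z^d\}$. Since the translates $(2R)k+[-R,R]^d$ tile $\R^d$ and $\omega_N$ covers the fundamental domain, $\mathcal{B}$ covers $\R^d$; it has exactly $N$ centers per fundamental domain of volume $(2R)^d$, so Lemma \ref{lemmatratata} yields $\Delta(\mathcal{B})=V_d\cdot N/(2R)^d$, and by definition of $\Gamma_d$ we conclude $\Gamma_d\leqslant V_d\cdot N/(2R)^d$.

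For the upper bound, for $\epsilon>0$ I would pick a cover $\mathcal{B}$ of $\R^d$ with $\Delta(\mathcal{B})\leqslant \Gamma_d+\epsilon$ and keep only those balls whose centers lie in $[-R+1,R-1]^d$. Those centers are in $[-R,R]^d$, and the corresponding balls cover the bulk $[-R+2,R-2]^d$: for $y$ in the bulk, the $\mathcal{B}$-ball containing $y$ has its center within distance $1$ of $y$, hence in $[-R+1,R-1]^d$. The boundary shell $[-R,R]^d\setminus[-R+2,R-2]^d$ has $\H_d$-measure $O(R^{d-1})$ and is coverable by $O(R^{d-1})$ auxiliary unit balls centered inside the shell. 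Therefore
$$
N_{[-R,R]^d}(1)\leqslant \#\{\text{centers of }\mathcal{B}\text{ in }[-R+1,R-1]^d\}+O(R^{d-1}),
$$
and Lemma \ref{lemmatratata} shows the first term divided by $(2R)^d$ tends to $(\Gamma_d+\epsilon)/V_d$ while the second is $o(R^d)$; sending $\epsilon\downarrow 0$ closes the bound.

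Finally, the fourth equality follows from the first part of Theorem \ref{bestcovering} applied to $A=[0,1]^d$ (for which $\H_d(A)=1$), giving $\lim_{s\to\infty}\sigma_{s,d}^{1/s}=1/\lim_N N^{1/d}\rho_{[0,1]^d}(N)$; raising to the $-d$ power and inserting the identification from the third step yields $\lim_{s\to\infty}\sigma_{s,d}^{-d/s}=\Gamma_d/V_d$. The main obstacle is the upper-bound half of the third step: one must transfer an essentially-optimal cover of $\R^d$, whose centers are arbitrary, to a cover of $[-R,R]^d$ with centers \emph{inside} the cube, absorbing the boundary discrepancy into the lower-order term $O(R^{d-1})=o(R^d)$; all other steps are either exact rescalings, a standard pseudo-inverse sandwich, or a direct invocation of Theorem \ref{bestcovering}.
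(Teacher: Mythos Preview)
Your proposal is correct and follows essentially the same route as the paper: the middle equalities are handled as straightforward rescalings/inversions (the paper leaves them to the reader), the last equality is quoted from Theorem \ref{bestcovering}, and the identity $\Gamma_d/V_d=\lim_R N_{[-R,R]^d}(1)/(2R)^d$ is obtained by periodizing an optimal cube-cover for one inequality and restricting a near-optimal $\R^d$-cover to a large cube for the other. The only cosmetic difference is in the upper-bound step: the paper restricts to the shrunk cube $[-(R-2),R-2]^d$ and absorbs the discrepancy into the factor $(2R)^d/(2(R-2))^d\to 1$, whereas you keep the full cube $[-R,R]^d$ and pay $O(R^{d-1})$ auxiliary balls for the boundary shell---your variant has the minor advantage of respecting the constraint $\omega_N\subset A$ in the definition of $N_A(r)$ without further comment.
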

\begin{proof}
The existence of 
$$
\lim_{N\to \infty} N\cdot \rho_{[0,1]^d}(N)^d
$$
as well as the last equality follows from Theorem \ref{bestcovering}. The equalities
$$
\lim_{R\to \infty} \frac{N_{[-R, R]^d}(1)}{(2R)^d} = \lim_{r\to 0} r^d N_{[0,1]^d}(r) = \lim_{N\to \infty} N\cdot \rho_{[0,1]^d}(N)^d
$$
are straightforward and left to the reader. We derive the first equality in \eqref{tyanitolkay}. For a small $\ep>0$ take a set $\mathcal{B}$ such that 
$$
\frac{\Gamma_d}{V_d}\geqslant \lim_{R\to \infty}\frac{\#\mathcal{B}_R}{(2R)^d} - \ep
$$
and 
$$
\R^d = \bigcup_{B\in \mathcal{B}} B,
$$ 
where $\mathcal{B}_R$ is defined as in preceding proof. As in the proof of Lemma \ref{lemmatratata}, we have
$$
[-(R-2), R-2]^d\subset \bigcup_{B\in \mathcal{B}_R} B;
$$
therefore
$$
\frac{N_{[-(R-2), R-2]^d]}(1)}{(2(R-2))^d}\leqslant \frac{\#\mathcal{B}_R}{(2R)^d}\cdot \frac{(2R)^d}{(2(R-2))^d}.
$$
Consequently,
$$
\lim_{R\to \infty} \frac{N_{[-R, R]^d}(1)}{(2R)^d} \leqslant \frac{\Gamma_d}{V_d}+\ep.
$$
In view of the arbitrariness of $\ep$, we get 
\begin{equation}\label{krokodil}
\lim_{R\to \infty} \frac{N_{[-R, R]^d}(1)}{(2R)^d} \leqslant \frac{\Gamma_d}{V_d}.
\end{equation}

To prove the opposite inequality, we fix a large number $R_0$ and choose a configuration $\omega$ with $\#\omega = N_{[-R_0, R_0]^d}(1)$ and
$$
[-R_0,R_0]^d\subset \bigcup_{x\in \omega}B(x, 1).
$$
Define
$$
\mathcal{B}:=\{B(x,1)\colon  x\in((2R_0\mathbb{Z}^d)+\omega)\};
$$
then obviously
$$
\R^d=\bigcup_{B\in \mathcal{B}} B.
$$
Fix a number $R>R_0$ and choose an integer $n$ such that $(2n-1)R_0\leqslant R \leqslant (2n+1)R_0$. Then
$$
\#\mathcal{B}_{(2n-1)R_0}\leqslant \#\mathcal{B}_R\leqslant \#\mathcal{B}_{(2n+1)R_0}.
$$
Since
$$
\#\mathcal{B}_{(2n-1)R_0} = (2n-1)^d N_{[-R_0, R_0]^d}(1)
$$
and
$$
\#\mathcal{B}_{(2n+1)R_0} = (2n+1)^d N_{[-R_0, R_0]^d}(1),
$$
we get
$$
\left(\frac{2n-1}{2n+1}\right)^d \cdot \frac{N_{[-R_0, R_0]^d}(1)}{(2R_0)^d} \leqslant \frac{\#\mathcal{B}_R}{(2R)^d} \leqslant \left(\frac{2n+1}{2n-1}\right)^d \cdot \frac{N_{[-R_0, R_0]^d}(1)}{(2R_0)^d}.
$$
Therefore, 
$$
\lim_{R\to\infty} \frac{\#\mathcal{B}_R}{(2R)^d} = \frac{N_{[-R_0, R_0]^d}(1)}{(2R_0)^d},
$$
which implies, in view of Lemma \ref{lemmatratata}, that
$$
\frac{\Gamma_d}{V_d}\leqslant \frac{N_{[-R_0, R_0]^d}(1)}{(2R_0)^d}.
$$
From of the arbitrariness of $R_0$ and the estimate \eqref{krokodil}, the lemma follows. 
\end{proof}
\bibliography{ref}
\bibliographystyle{plain}

\end{document}